\newtheorem {theorem} {Theorem}
\newcommand{\comment}[1]{}
\newtheorem{cor}{Corollary}[]
\newtheorem{lem}{Lemma}[]
\begin{document}
\title{Variable Bandwidth Diffusion Kernels}
\author[rvt]{Tyrus Berry\corref{cor}}
\ead{thb11@psu.edu}
\author[rvt,rvt2]{John Harlim}
\ead{jharlim@psu.edu}
\cortext[cor]{Corresponding author}
\address[rvt]{Department of Mathematics, the Pennsylvania State University, 109 McAllister Building, University Park, PA 16802-6400, USA}
\address[rvt2]{Department of Meteorology, the Pennsylvania State University, 503 Walker Building, University Park, PA 16802-5013, USA}
\date{\today}

\begin{abstract}
Practical applications of kernel methods often use variable bandwidth kernels, also known as self-tuning kernels, however much of the current theory of kernel based techniques is only applicable to fixed bandwidth kernels.  In this paper, we derive the asymptotic expansion of these variable bandwidth kernels for arbitrary bandwidth functions; generalizing the theory of Diffusion Maps and Laplacian Eigenmaps.  We also derive pointwise error estimates for the corresponding discrete operators which are based on finite data sets; generalizing a result of Singer which was restricted to fixed bandwidth kernels.  Our analysis reveals how areas of small sampling density lead to large errors, particularly for fixed bandwidth kernels.  We explain the limitation of the existing theory to data sampled from compact manifolds by showing that when the sampling density is not bounded away from zero (which implies that the data lies on an open set) the error estimates for fixed bandwidth kernels will be unbounded.  We show that this limitation can be overcome by choosing a bandwidth function inversely proportional to the sampling density (which can be estimated from data) which allows us to control the error estimates uniformly over a non-compact manifold.  We numerically verify these results on non-compact manifolds by constructing the generator of the Ornstein-Uhlenbeck process on a real line and a two-dimensional plane using data sampled independently from the respective invariant measures. We also verify our results on compact manifolds by constructing the Laplacian on the unit circle and the unit sphere and we show that the variable bandwidth kernels exhibit reduced sensitivity to bandwidth selection and give better results for an automatic bandwidth selection algorithm.  
\end{abstract}

\begin{keyword}
diffusion maps \sep variable bandwidth kernels \sep manifold learning \sep nonparametric modeling \sep self-tuning kernels


\end{keyword}

\maketitle

\section{Introduction}

Graph Laplacian and kernel based techniques are ubiquitous in machine learning, clustering, classification.  While these practical algorithms have been very successful in various applications, they were not mathematically understood until the development of Laplacian Eigenmaps \cite{BN} and Diffusion Maps \cite{diffusion} as well as other works on the convergence of graph Laplacians to their continuous counterparts \cite{Hein05,Ting2010}.  The theory of \cite{BN,diffusion,SingerEstimate} applies to fixed bandwidth kernels of the form,
\begin{align}K_\epsilon(x,y)=h\left(\frac{\|x-y\|^2}{\epsilon}\right),\label{DMkernel}\end{align} 
where $h$ decays exponentially as the distance $\|x-y\|$ increases.  The novel perspective taken by \cite{BN,diffusion} was that by evaluating the kernel function on all pairs of data points, we can approximate a geometric operator for functions defined on the data set.  
  
Most applications of these kernel based techniques use variable bandwidth kernels, also known as self-tuning kernels (see for example \cite{selftuning}) which have the form,
\begin{align}\label{vbkernel} K^S_{\epsilon}(x,y) = h\left(\frac{||x-y||^2}{\epsilon \rho(x)\rho(y)}\right), \end{align} 
where the superscript $S$ indicates that the bandwidth function, $\rho$, is applied symmetrically to both $x$ and $y$.  In fact, this class of kernels has been routinely used in kernel density estimation problems \cite{RosenblattFBK,ParzenFBK} and the choice of bandwidth function, $\rho$, is  known to be especially important for accurate estimation of the tails of the distribution and regions of sparse sampling, see for example \cite{ScottVBK,ScottVBK2}. Independently, kernels of the form \eqref{vbkernel} have been proposed in \cite{giannakisMajda,GiannakisPNAS} for describing data generated by a dynamical system.  They chose a bandwidth function, $\rho$, based on the distance traveled in state space in a fixed time unit. Combined with time-delay embeddings, this bandwidth function was shown to give a natural scaling which reduces the dependence on the initial observation function. 

Despite the empirical success of the variable bandwidth kernels \eqref{vbkernel}, the asymptotic theory of \cite{BN,diffusion,SingerEstimate} has not been fully extended beyond the fixed bandwidth kernels of the form \eqref{DMkernel}. The first theoretical result on variable bandwidth kernels was established in \cite{Ting2010}, which derived the limiting operator of a sequence of kernels, which includes kernels of the form \eqref{vbkernel}.  However, the results of \cite{Ting2010} do not show the rate of convergence to the limiting operator or give error bounds for the discrete approximations; this is because their proof follows from a very general theorem on the convergence of sequences of Markov processes. The main contribution of this paper is to extend the asymptotic analysis of \cite{diffusion} and the discrete analysis in \cite{SingerEstimate} to give the first rigorous error bounds for the variable bandwidth kernels in \eqref{vbkernel}. These error bounds reveal that the pointwise errors have a complex dependence on the sampling measure and the bandwidth function.  In particular, this result will reveal that for a fixed bandwidth kernel, the error becomes unbounded as the sampling measure approaches zero.  This fact explains why the theory of \cite{BN,diffusion} requires the data to lie on a compact manifold, which is a significant restriction for practical applications when data is sparsely sampled in some regions. Finally, using a particular choice of bandwidth function that is inversely proportional to the sampling density, we are able to control the error in the regions of sparse sampling and extend the theory of \cite{diffusion} to non-compact manifolds for the first time. Intuitively, this special choice of kernel uses a large bandwidth in areas of sparse sampling and a small bandwidth in areas of dense sampling.

Coifman and Lafon \cite{diffusion} introduced the diffusion map as a way to represent data on a low-dimensional Euclidean space. When the data is generated by a dynamical system, the diffusion map algorithm can be used to find low-dimensional representations of the long-time dynamics as shown in \cite{diffcoords,diffusionslowmanifold,diffusionreductioncoords}. These ideas have been generalized to time-delay reconstructions of dynamical systems in the work of \cite{DMDC} and related work in \cite{giannakisMajda,GiannakisPNAS}. Building upon this idea, the variable bandwidth kernel developed in the present paper has been applied by the authors for quantifying uncertainties of stochastic gradient flow systems  \cite{BH14UQ}. In particular, they used the operator estimates developed in this paper as nonparametric models to address nonlinear forecasting, filtering, and response problems.

The remainder of this paper is organized as follows: In Section \ref{resultsSection} we present the main theoretical result as well as practical considerations for the application of this result.  The detailed proofs of this theorem are described in two appendices. In Section \ref{algorithm} we give the details of the numerical algorithm including some important numerical considerations for optimal implementation.  In Section \ref{numerics} we present numerical examples on unbounded manifolds to validate the results of Theorem \ref{mainresult}.  In Section \ref{circleexample} we present numerical examples on compact domains to compare fixed and variable bandwidth kernels for estimating the eigenfunctions of the Laplacian operator. We conclude this paper with a short summary in Section \ref{conclusion}.

\section{Main Result}\label{resultsSection}

The geometric perspective of \cite{BN,diffusion} was to construct a
stochastic matrix whose generator, $L_{\epsilon,\alpha}$, is a discrete representation of a continuous Kolmogorov operator,
\begin{align} \mathcal{L}_{\alpha}f = \Delta f + (2-2\alpha)\nabla f \cdot \frac{\nabla q}{q},\label{ELf} \end{align}
acting on smooth functions $f$. The stochastic matrix $L_{\epsilon,\alpha}$ is constructed by evaluating the homogeneous kernel \eqref{DMkernel} on all pairs of points $\{x_i\}_{i=1}^N\subset\mathbb{R}^n$, sampled from a $d$-dimensional manifold $\mathcal{M} \subset \mathbb{R}^n$ with smooth sampling density $q$. Note that $\Delta$ denotes the Laplacian and $\nabla$ denotes the gradient operator, and both operators are defined with respect to the Riemannian metric that the manifold $\mathcal{M}$ inherits from the ambient space $\mathbb{R}^n$. We should clarify that in this paper, we refer to the Laplacian operator as the negative of Laplace-Beltrami operator for convenience. The parameter $\alpha$ controls the degree to which the sampling distribution is allowed to bias the operator, and a key result of \cite{diffusion} is that setting $\alpha = 1$ removes the bias entirely and recovers the Laplacian operator independent of the sampling density $q$. Moreover, setting $\alpha=1/2$ recovers the backward Kolmogorov operator of a gradient flow with potential $U=-\log{q}$, and setting $\alpha=0$ recovers the normalized graph Laplacian on a graph with isotropic (Gaussian) weights \cite{BN} which approximates the Laplacian operator when the sampling density is uniform. For the case of $\alpha=0$ with uniform sampling, the error estimates of Singer \cite{SingerEstimate} showed that for any sufficiently smooth function $f$ at any point $x_i$ in the data set,
\begin{align} L_{\epsilon,\alpha}f(x_i) \equiv \frac{1}{\epsilon}\left(\frac{\sum_j K_{\epsilon}(x_i,x_j)f(x_j)}{\sum_j K_{\epsilon}(x_i,x_j)} - f(x_i)\right) = \mathcal{L}_\alpha f(x_i) + \mathcal{O}\left(\epsilon,\frac{||\nabla f(x_i)||}{\sqrt{N} \epsilon^{1/2+d/4}} \right) \label{singer}\end{align}
with high probability. The discrete operator $L_{\epsilon,\alpha}$ is closely related to the graph Laplacian with edge weights $w_{ij} = K_{\epsilon}(x_i,x_j)$ given by the kernel $K_{\epsilon}$ in \eqref{DMkernel} (see \cite{BN,diffusion}).

Our main contribution in this paper can be summarized as follows:

\begin{theorem}\label{mainresult} Let $q\in L^1(\mathcal{M}) \cap \mathcal{C}^3(\mathcal{M})$ be a density that is bounded above on an embedded $d$-dimensional manifold $\mathcal{M} \subset \mathbb{R}^n$ without boundary and let $\{x_i\}_{i=1}^N$ be sampled independently with distribution $q$.  
Let $K^S_{\epsilon}(x,y)$ be a variable bandwidth kernel of the form \eqref{vbkernel} with bandwidth function $\rho$ and shape function $h:[0,\infty) \to [0,\infty)$ with exponential decay at infinity.
For a smooth real-valued function $f \in L^2(\mathcal{M},q)\cap \mathcal{C}^3(\mathcal{M})$ and an arbitrary point $x_i \in \mathcal{M}$, define the discrete functionals,
\begin{align}
F_i(x_j) &= \frac{K^S_{\epsilon}(x_i,x_j)f(x_j)}{q^S_{\epsilon}(x_i)^\alpha q^S_{\epsilon}(x_j)^\alpha},\quad\quad
G_i(x_j) = \frac{K^S_{\epsilon}(x_i,x_j)}{q^S_{\epsilon}(x_i)^\alpha q^S_{\epsilon}(x_j)^\alpha}, \nonumber
\end{align}
where $q^S_{\epsilon}(x_i) = \sum_l K^S_{\epsilon}(x_i,x_l)/\rho(x_i)^{d}$ is proportional to a kernel density estimate of the sampling density $q$.  Then, with high probability,
\begin{align}
L^S_{\epsilon,\alpha}f(x_i) &\equiv  \frac{1}{\epsilon m \rho(x_i)^{2}}\left(\frac{\sum_{j}F_i(x_j)}{\sum_{j}G_i(x_j)}-f(x_i)\right) \nonumber \\ &= \mathcal{L}_{\alpha,\rho} f(x_i) + \mathcal{O}\left(\epsilon, \frac{q(x_i)^{1/2}\rho(x_i)^{-d/2}}{\sqrt{N}\epsilon^{2+d/4}} ,\frac{||\nabla f(x_i)||q(x_i)^{-(1/2-2\alpha+2d\alpha)}\rho(x_i)^{-(d/2+1)}}{\sqrt{N}\epsilon^{1/2+d/4}} \right), \label{errorbound}
\end{align}
for some finite valued constant $m$, where 
\begin{align}\mathcal{L}_{\alpha,\rho} f \equiv\Delta f + 2(1-\alpha)\nabla f \cdot \frac{\nabla q}{q}+ (d+2) \nabla f \cdot \frac{\nabla \rho}{\rho}.\label{Larho}\end{align}
\end{theorem}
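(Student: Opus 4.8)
The plan is to split \eqref{errorbound} into a deterministic bias, coming from the small-$\epsilon$ expansion of the underlying continuous integral operator, and a stochastic error, coming from replacing integrals against $q$ by Monte Carlo sums over the finite sample $\{x_j\}$. The two error terms in the theorem are stochastic, while the $\mathcal{O}(\epsilon)$ term is the bias; I would establish these separately and then combine them.

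\emph{Continuous limit (bias).} First I would prove an asymptotic expansion for integrals $\int_{\mathcal{M}} K^S_\epsilon(x,y)\phi(y)\,dV(y)$ against an arbitrary smooth $\phi$, the variable-bandwidth analogue of the expansion of Coifman and Lafon. Since $h$ decays exponentially, the integral localizes to a $\sqrt{\epsilon}\,\rho(x)$-neighborhood of $x$; passing to geodesic normal coordinates and rescaling $y-x=\sqrt{\epsilon}\,\rho(x)\,z$ turns the kernel into a Gaussian-type weight in $z$ with $\epsilon$-dependent perturbations, where the difference between extrinsic and intrinsic distance (the second fundamental form) enters only at order $\epsilon$ and so contributes to the bias remainder. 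The feature new to the variable-bandwidth setting is the product $\rho(x)\rho(y)$ in the argument of $h$: Taylor expanding $\rho(y)=\rho(x)+\sqrt{\epsilon}\,\rho(x)\,\nabla\rho\cdot z+\cdots$ gives $(\rho(x)\rho(y))^{-1}=\rho(x)^{-2}\left(1-\sqrt{\epsilon}\,(\nabla\rho/\rho)\cdot z+\cdots\right)$, and expanding $h$ about its leading argument then integrating the Gaussian moments produces the extra drift $(d+2)\,\nabla f\cdot\nabla\rho/\rho$, the coefficient $d+2$ being exactly the fourth-moment factor $\int \|z\|^2 z_i z_j\, e^{-\|z\|^2/2}\,dz$. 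Applying the expansion with $\phi=q$ shows $q^S_\epsilon(x)\propto q(x)\left(1+\mathcal{O}(\epsilon)\right)$, so the normalizing factors $q^S_\epsilon(x_i)^\alpha q^S_\epsilon(x_j)^\alpha$ can be replaced by $q(x)^\alpha q(y)^\alpha$ up to $\mathcal{O}(\epsilon)$; substituting into $\int F_i\, q\,dV$ and $\int G_i\, q\,dV$, forming the ratio, subtracting $f(x_i)$, and dividing by $\epsilon\, m\, \rho(x_i)^2$ recovers $\mathcal{L}_{\alpha,\rho}f$ with an $\mathcal{O}(\epsilon)$ remainder, where $m$ is the appropriate second moment of $h$ that fixes the Laplacian scale.

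\emph{Sampling error (variance).} Next I would treat $\tfrac{1}{N}\sum_j F_i(x_j)$ and $\tfrac{1}{N}\sum_j G_i(x_j)$ as unbiased Monte Carlo estimators of $\int F_i\, q\,dV$ and $\int G_i\, q\,dV$ and apply a Bernstein-type concentration bound, which produces the ``with high probability'' statement: each estimator deviates from its mean by $\mathcal{O}\!\left(\sqrt{\mathrm{Var}/N}\right)$. The variances reduce to integrals of $(K^S_\epsilon)^2$, evaluated by the same localization-and-rescaling argument, so that squaring a bump of width $\sqrt{\epsilon}\,\rho(x_i)$ contributes a factor $\epsilon^{d/2}\rho(x_i)^d$ together with the sampling weight $q(x_i)$ at the base point. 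Propagating these through the ratio, the crucial observation (exactly as in Singer) is that to leading order $F_i(x_j)\approx f(x_i)\,G_i(x_j)$, so $F_i-f(x_i)G_i=G_i\big(f(x_j)-f(x_i)\big)\approx G_i\,\nabla f(x_i)\cdot(x_j-x_i)$: the $f(x_i)$ contributions cancel between numerator and denominator and the surviving fluctuation is weighted by $\nabla f$, giving the gradient-dependent error term. The remaining, $f$-independent fluctuation of the normalization and the denominator is collected into the first error term. Multiplying both by the amplifying prefactor $1/(\epsilon\, m\, \rho(x_i)^2)$ and tracking the powers of $\epsilon$, $q(x_i)$ and $\rho(x_i)$ from the variance and from the $q^{-\alpha}$ normalization yields the two stated scalings.

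\emph{Main obstacle.} The delicate part is the bookkeeping of how the estimation error of $q^S_\epsilon$ feeds into the $\alpha$-reweighting: because $q^S_\epsilon(x_j)$ is itself an empirical sum appearing \emph{inside} the sums being estimated, its fluctuations are correlated with those of $F_i$ and $G_i$, and one must verify that the resulting higher-order stochastic cross terms are dominated by the two terms retained. In particular, the $q$-dependent exponent $1/2-2\alpha+2d\alpha$ in the gradient error term arises precisely from combining the $q^{-\alpha}$ factors of the normalization with the $q^{1/2}$ from the variance and the $q$-weight of the sampling measure; obtaining these exponents correctly, uniformly in the base point $x_i$, is where the argument genuinely extends Singer's fixed-bandwidth estimate and is the step I expect to require the most care.
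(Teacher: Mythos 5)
Your proposal is correct in substance, and its discrete half coincides with the paper's: both isolate the empirical normalization $q^S_{\epsilon}$, demand that it match its expectation to accuracy $\mathcal{O}(\epsilon^2)$ --- because $j$-dependent fluctuations of $q^S_{\epsilon}(x_j)^{\alpha}$ do not cancel in the ratio $\sum_j F_i/\sum_j G_i$ and are amplified by the $1/\epsilon$ prefactor, which is exactly what produces the $\epsilon^{2+d/4}$ in the second error term --- and both exploit Singer's cancellation $F_i-f(x_i)G_i \approx G_i\,\nabla f(x_i)\cdot(x_j-x_i)$ so that the surviving variance is weighted by $\|\nabla f\|^2$ and controlled by a Chernoff/Bernstein bound. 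Where you genuinely diverge is the continuous expansion. You expand the symmetric kernel directly in normal coordinates, Taylor-expanding $\rho(y)$ inside $h$ and evaluating moments, so that $(d+2)$ appears as a fourth-moment factor. The paper never differentiates $h$ at all: it changes variables to reduce the symmetric kernel \eqref{vbkernel} to the right-formulation \eqref{rightkernel}, and disposes of the $\rho(y)$ dependence by a weak-formulation trick --- pair against a smooth test function $g$, swap the order of integration so the left-formulation \eqref{leftkernel} (itself reduced to the fixed-bandwidth lemma by a linear change of variables) applies in the $x$-variable, then use self-adjointness of $\Delta$ to move the expansion onto $f\rho^{d/2+1}$; the drift $(d+2)\nabla f\cdot\nabla\rho/\rho$ then falls out of the product rule applied to $\Delta(f\rho^{d/2+1})$. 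The duality route buys black-box reuse of the Coifman--Lafon expansion, extended to non-compact $\mathcal{M}$ by a Cauchy--Schwarz tail bound using $f\in L^2(\mathcal{M},q)$ and $q$ bounded above --- a step you should make explicit, since when $f$ is unbounded (e.g.\ Hermite polynomials on $\mathbb{R}$) exponential decay of $h$ alone does not justify localizing the integral. Your direct route buys a self-contained computation that makes the moment origin of $d+2$ transparent, but two cautions apply: for a general shape function the coefficient arises from an integration-by-parts identity on $\int h'(\|z\|^2)\|z\|^2 z_i z_j\,dz$ rather than a literal Gaussian fourth moment, and the $\sqrt{\epsilon}$-order perturbation of the kernel argument must be expanded to second order and tracked against the curvature and volume-form corrections of the same order, which is where the bookkeeping is most error-prone and which the paper's reduction deliberately sidesteps.
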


 We should note while this result is consistent with that of \cite{Ting2010}, which showed the convergence of the variable bandwidth kernels to the operator in \eqref{Larho}, Theorem~\ref{mainresult} also provides error bounds for both the continuous and discrete operators, generalizing the asymptotic theory \cite{diffusion,SingerEstimate} to the variable bandwidth kernels of the form \eqref{vbkernel}. The detailed proof of this theorem is given in two appendices below. In \ref{VBK}, we derive the first component of the error bounds in \eqref{errorbound}. In \ref{discreteOps}, we derive the second and third components of the error bounds in \eqref{errorbound}.

For a particular choice of bandwidth function, this error bound will reveal some interesting implications when the data is sparsely sampled as we will discuss below. Choosing $q= \rho^\beta+\mathcal{O}(\epsilon)$, we deduce,

\begin{cor}\label{corollary} Given the same hypothesis as in Theorem~\ref{mainresult}, let the bandwidth function $\rho = q^{\,\beta} + \mathcal{O}(\epsilon)$. Then, with high probability,
\begin{align}\label{errorestimate}
L^S_{\epsilon,\alpha,\beta}f(x_i)  = \mathcal{L}_{\alpha,\beta} f(x_i) + \mathcal{O}\left(\epsilon, \frac{q(x_i)^{(1-d\beta)/2}}{\sqrt{N}\epsilon^{2+d/4}} ,\frac{||\nabla f(x_i)||q(x_i)^{-c_2}}{\sqrt{N}\epsilon^{1/2+d/4}} \right), 
\end{align}
where the discrete operator $L^S_{\epsilon,\alpha,\beta}$ is $L^S_{\epsilon,\alpha}$ in \eqref{errorbound} where bandwidth function $\rho$ is replaced with $q^\beta$, and the continuous operator $\mathcal{L}_{\alpha,\rho}$ in \eqref{Larho} becomes,
\begin{align}\mathcal{L}_{\alpha,\beta} f \equiv\Delta f + c_1\nabla f \cdot \frac{\nabla q}{q},\label{generator_ab}\end{align}
with $c_1 = 2-2\alpha + d\beta + 2\beta$ and $c_2 = 1/2-2\alpha+2d\alpha +d\beta/2+\beta$.
\end{cor}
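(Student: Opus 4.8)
The plan is to derive Corollary~\ref{corollary} as a direct specialization of Theorem~\ref{mainresult}, obtained by substituting the bandwidth function $\rho = q^{\beta}$ into both the limiting operator \eqref{Larho} and the two sampling error terms of \eqref{errorbound}, and then checking that the $\mathcal{O}(\epsilon)$ perturbation in $\rho = q^{\beta}+\mathcal{O}(\epsilon)$ does no harm. The one computation that matters is the logarithmic derivative: since $\rho = q^{\beta}$ gives $\nabla\rho = \beta q^{\beta-1}\nabla q$, we have $\nabla\rho/\rho = \beta\,\nabla q/q$. Substituting this into \eqref{Larho} merges the two drift terms,
\begin{align}
\mathcal{L}_{\alpha,\rho}f = \Delta f + \left(2(1-\alpha)+(d+2)\beta\right)\nabla f\cdot\frac{\nabla q}{q} = \Delta f + c_1\,\nabla f\cdot\frac{\nabla q}{q}, \nonumber
\end{align}
with $c_1 = 2-2\alpha+d\beta+2\beta$, which is precisely \eqref{generator_ab}.

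Next I would push the same substitution through the error terms. The power $q^{1/2}\rho^{-d/2}$ in the second term of \eqref{errorbound} collapses to $q^{(1-d\beta)/2}$, and the power $q^{-(1/2-2\alpha+2d\alpha)}\rho^{-(d/2+1)}$ in the third term collapses to $q^{-c_2}$ with $c_2 = 1/2-2\alpha+2d\alpha+d\beta/2+\beta$, giving exactly the two sampling errors in \eqref{errorestimate}. The prefactor $1/(\epsilon m\rho^{2})$ becomes $1/(\epsilon m q^{2\beta})$, which is what the definition of $L^S_{\epsilon,\alpha,\beta}$ already absorbs.

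The main (and essentially only) obstacle is to confirm that using the estimate $\rho = q^{\beta}+\mathcal{O}(\epsilon)$ in place of the exact $q^{\beta}$ neither changes the leading operator nor inflates the error orders. Writing $\rho = q^{\beta}+\epsilon g$ with $g$ bounded, I would expand to first order: the logarithmic derivative becomes $\nabla\rho/\rho = \beta\,\nabla q/q + \mathcal{O}(\epsilon)$, so the perturbation shifts the drift coefficient only by $\mathcal{O}(\epsilon)$, which is swallowed by the first slot of the error estimate; and the negative powers expand as $\rho^{-d/2}=q^{-d\beta/2}(1+\mathcal{O}(\epsilon))$ and $\rho^{-(d/2+1)}=q^{-\beta(d/2+1)}(1+\mathcal{O}(\epsilon))$, so each sampling error is multiplied only by $1+\mathcal{O}(\epsilon)$ and its stated order is unchanged. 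Because $q$ is bounded above and lies in $\mathcal{C}^{3}(\mathcal{M})$, these Taylor expansions are valid wherever $q>0$, which reduces the corollary to Theorem~\ref{mainresult}.
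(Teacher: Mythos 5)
Your proposal is correct and takes essentially the same route as the paper: the paper derives \eqref{generator_ab} by exactly your substitution $\nabla\rho/\rho = \beta\,\nabla q/q$ in \eqref{gradientFlow} (yielding \eqref{fullSymmetricResult}), collapses the two sampling-error exponents of \eqref{errorbound} to $q^{(1-d\beta)/2}$ and $q^{-c_2}$ in the same way, and dismisses the $\mathcal{O}(\epsilon)$ error in the estimated bandwidth with the same observation you make, namely that it perturbs an already order-$\epsilon$ expansion (the paper phrases this as cancellation under the left-normalization in \eqref{symmetricExpansionNonUniform}). The only caveat, left implicit by the paper as well, is that your expansion $\rho = q^{\beta}+\epsilon g$ tacitly needs $g$ bounded in $\mathcal{C}^1$ rather than merely bounded, so that $\nabla\rho/\rho = \beta\,\nabla q/q + \mathcal{O}(\epsilon)$ holds.
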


The key to applying Corollary~\ref{corollary} is in the choices of $\alpha$ and $\beta$.  The first consideration for choosing $\alpha$ and $\beta$ is that on a non-compact manifold $q$ may become arbitrarily close to zero.  In order to bound the error terms on a non-compact manifold, the exponent terms in the error components in \eqref{errorestimate} must satisfy $(1-d\beta)/2 > 0$ and $c_2<0$.  Intuitively, we expect $\beta<0$ to have the best results since this increases the bandwidth in areas of sparse sampling and decreases it in areas of dense sampling.  Some natural choices for $\beta$ are $-1/2$, $-1/d$ and $-1/(d/2+1)$.  One advantage of the choice $\beta = -1/2$ is that we find $c_1 = 1-2\alpha - d/2$ and so simply by taking $\alpha$ sufficiently less than zero we can guarantee $c_1>0$ even when the dimension is unknown.  This is important for gradient flow systems where the fundamental properties (such as the invariant measure) are dramatically altered if $c_1 < 0$.  In all the examples in this paper, the variable bandwidth kernel will use $\beta=-1/2$ and the fixed bandwidth kernel will use the same algorithm with $\beta=0$.

The second consideration for choosing $\alpha$ and $\beta$ is the desired operator, which is controlled by $c_1$, and in this paper we will be interested in two operators.  First, we will be interested in the case of gradient flow systems for which $c_1=1$ and therefore $\beta=-1/2$ implies that $\alpha = -d/4$.  Second, we will consider the case of finding the Laplacian, for which $c_1=0$ and so $\beta=-1/2$ implies $\alpha = 1/2-d/4$.  When $c_1=0$ the operator $\mathcal{L}_{\alpha,\beta}$ in \eqref{generator_ab} is the Laplacian on the manifold and is \emph{independent of the sampling density}.  This fact was first realized by \cite{diffusion} for the fixed bandwidth case, where $\alpha=1$, $\beta=0$.  This is an important consideration for practitioners, since it is often valuable for an algorithm to be independent of changes in the sampling measure.  Alternatively, some applications may require the sampling to bias the results, and in these cases increasing $c_1>0$ gives a natural way to increase the bias of the sampling on the operator since the invariant measure of the operator is proportional to $q^{c_1}$ where $q$ is the sampling measure.

Notice that the bandwidth function used in Corollary~\ref{corollary} is $\rho = q^{\beta} + \mathcal{O}(\epsilon)$ which implies that we do not require the sampling density to be exactly known.  For practical applications we may use any kernel density estimate to find an order-$\epsilon$ approximation of $q$ for the purposes of defining the bandwidth function $\rho$.  However, we note that the normalization term $q^S_{\epsilon}$ may not be replaced with an alternate density estimate because the result in Theorem~\ref{mainresult} carefully accounts for the higher order terms in the asymptotic expansion of $q^S_{\epsilon}$.

The error bound in Corollary~\ref{corollary} has three components. The $\mathcal{O}(\epsilon)$ component is due to the error between the true operator $\mathcal{L}_{\alpha,\beta}$ and the operator $L_{\epsilon,\alpha,\beta}^S$ in \eqref{errorestimate} with the summations replaced by the expectations, where $\mathbb{E}[f] \equiv \int_{\cal M} f(z)q(z)dV(z)$. In other words, for fixed $\epsilon$, in the limit of large $N$ the error will be of order $\epsilon$ assuming the first term dominates.  The error term $\mathcal{O}\left(\frac{q(x_i)^{(1-d\beta)/2}}{\sqrt{N}\epsilon^{2+d/4}}\right)$ is due to the need to obtain an order-$\epsilon^2$ estimate of $q_{\epsilon}^S$.  While this term dominates for $q = \mathcal{O}(1)$, as $q\to 0$ the third component of the error may become dominant.  
The error term $\mathcal{O}\left(\frac{||\nabla f(x_i)||q(x_i)^{-c_2}}{\sqrt{N}\epsilon^{1/2+d/4}}\right)$ is due to the bias error between the ratio of discrete sums $\sum_j F_i(x_j)/\left(\sum_j G_i(x_j)\right)$ and the continuous expectations $\mathbb{E}[F_i]/\mathbb{E}[G_i]$.  Depending on the choices of $\alpha$ and $\beta$, this term can dominate the error in areas of sparse sampling where $q$ is small.  In particular, if $q$ is not bounded away from zero, as $N$ increases the data will begin to sample areas of small density, and this final error term can actually increase as the amount of data increases when $c_2 > 0$.

The $\alpha$ normalization in the functionals $F_i, G_i$ in Theorem~\ref{mainresult} is a de-biasing parameter, which is equivalent to the diffusion maps $\alpha$ normalization so that when $\beta=0$ we recover the operator $\mathcal{L}_{\alpha,0}=\mathcal{L}_\alpha$ in \eqref{ELf} since $c_1 = 2-2\alpha$.  Notice that $\beta=0$ is the one case which does not require knowledge of the intrinsic dimension $d$ of the manifold $\mathcal{M}$.  However, when $\beta=0$, $\alpha>0$, and $d\in\mathbb{N}$, we have $c_2 = 1/2+2\alpha(d-1)>0$ which means that the error may be unbounded as $q\to 0$.  This crucial observation explains why using the fixed bandwidth kernel in \eqref{DMkernel} may produce large error estimates when the sampling measure is not bounded away from zero. By taking $\beta < 0$ we can make $c_2\leq 0$ which implies the pointwise errors are uniformly bounded and we will recover the continuous operator in the limit of large data.  Taking $\beta<0$ will require knowledge of the intrinsic dimension $d$ of $\mathcal{M}$, see \cite{HeinDimension,MaggioniDimension} for some methods and considerations for estimating the intrinsic dimension, and we suggest a new method in Section \ref{circleexample}.

A related issue on non-compact manifolds is that $||\nabla f||$ may be unbounded.  In particular, in Section \ref{numerics} we will consider the Hermite polynomials which are eigenfunctions of the Kolmogorov operator of a stochastically forced gradient flow with a quadratic potential on the real line.  As long as $q$ has sufficiently fast decay at infinity and $c_2<0$, the term $q^{-c_2}$ will control the growth of $||\nabla f||$ to allow for uniform pointwise error bounds on the data set.  Of course, it would require an infinite amount of data to construct the entire operator on an unbounded domain, however, for a finite amount of data we can correctly estimate the operator pointwise with bounded error over the entire data set by taking $\beta<0$ to sufficiently force $c_2<0$.  

We note that the results of \cite{diffusion} suggest that Theorem \ref{mainresult} can be extended to manifolds with a compact boundary and that a Neumann boundary condition is implicit to the kernel based approximation.  We do not consider manifolds with boundary here because non-compact manifolds may have non-compact boundaries and the results of \cite{diffusion} strongly rely on the compactness of the boundary.  We also note that Theorem \ref{mainresult} only gives pointwise convergence to the operator $\mathcal{L}_{\alpha,\beta}$ in \eqref{generator_ab} when applied to smooth functions, and this does not imply spectral convergence.  In fact, the spectral convergence of the continuous expectations to the limiting operator for fixed bandwidth kernels was shown in \cite{BNspectral} and the spectral convergence of the discrete operator to the continuous expectations was shown in \cite{von2008consistency}.  In \ref{VBK}, we numerically verify our proof of Theorem~\ref{mainresult} by comparing the pointwise estimates, obtained by evaluating the operator on smooth functions (see Figures~\ref{leftrightcomp} and \ref{symmetricNonUniform}). In the numerical examples in Sections~\ref{numerics} and \ref{circleexample} we will compare the eigenfunctions of the limiting operator to the discrete estimates given by the eigenvectors of the discrete operator.  Our examples show good agreements which suggest that generalizing the spectral convergence results of \cite{BNspectral,von2008consistency} is possible, but extending these results to variable bandwidth kernels is beyond the scope of this paper.


\section{Details of the numerical implementation}\label{algorithm}

Given a data set $\{x_i\}_{i=1}^N \subset \mathbb{R}^n$ sampled independently from a density $q(x)$ on a $d$-dimensional Riemannian manifold $\mathcal{M} \subset \mathbb{R}^n$, the algorithm of this section will produce an $N \times N$ sparse matrix $L^S_{\epsilon,\alpha,\beta}$ in \eqref{errorestimate} which approximates the Kolmogorov operator in \eqref{generator_ab}. For example, assume that the data is generated by Brownian motion on a manifold in a potential $U(x)$, that is,
\begin{align}\label{SDE} dx = -c_1 \nabla U(x)dt + \sqrt{2} dW_t, \end{align}
where $W_t$ is a Brownian motion on the manifold $\mathcal{M}$ and $U:\mathcal{M}\to \mathbb{R}$ is smooth potential.  The invariant measure of this system is given by $q(x) \propto \exp(-c_1 U(x))$ and we will assume that the data are independently sampled from this distribution.  Letting $\Delta$ be the Laplacian (with negative eigenvalues) on $\mathcal{M}$, the generator of the stochastic process \eqref{SDE} is the backward Kolmogorov operator $\mathcal{L}_{\alpha,\beta}$ in \eqref{generator_ab}.  Typically we will be interested in the cases $c_1 = 0$, which approximates the Laplacian (the generator for Brownian motion on $\mathcal{M}$), and $c_1 = 1$ which approximates the generator of the stochastically forced gradient flow in \eqref{SDE}.  

In order to make use of the result in Corollary~\ref{corollary}, we require the bandwidth function $\rho$ to be a power of the sampling density, $\rho = q^{\beta} + \mathcal{O}(\epsilon)$.  While it is possible to use a fixed bandwidth kernel to estimate $q$ up to order-$\epsilon$, the results of \ref{samplingErr1} suggest that we cannot take $\epsilon$ very small unless $N$ is large.  The standard theory of variable bandwidth kernel density estimation \cite{RosenblattFBK,ParzenFBK,ScottVBK,ScottVBK2} offers multiple competitive algorithms, however for simplicity we will use an ad hoc method based on the distance to the nearest neighbors.  To estimate $q$ for the purposes of defining $\rho$, we first define an ad hoc bandwidth function $\rho_0(x_i) = \left(\frac{1}{k_0-1}\sum_{j=2}^{k_0} ||x_i-x_{\textup{I}(i,j)}||^2 \right)^{1/2}$, where $\textup{I}(i,j)$ is the index of the $j$-th nearest neighbor of $x_i$ from the data set (note that we leave out the nearest neighbor $\textup{I}(i,1)$, which is always the point $x_i$ itself).  In the numerical examples in the next section we use $k_0=8$ nearest neighbors and we found that the results are not very sensitive to the choice of $k_0$ (we tested values up to $k_0=64$ with similar results).  We then define $\epsilon_0^{1/2} \equiv \frac{1}{N}\sum_{i=1}^N \rho_0(x_i)$ and  $\tilde\rho_0 \equiv \rho_0/\epsilon_0^{1/2}$, so that $\tilde \rho_0 = \mathcal{O}(1)$ and use a symmetric kernel with bandwidth $\rho_0$ to estimate the density as,
\begin{align}\label{preestimate} q_0(x_i) &\equiv \frac{(2\pi)^{-d/2}}{\rho_0(x_i)^{d} N}\sum_{l=1}^N \exp\left(\frac{-||x_i-x_l||^2}{2\rho_0(x_i)\rho_0(x_l)} \right) \nonumber \\ &= \frac{(2\pi\epsilon_0)^{-d/2}}{\tilde\rho_0(x_i)^{d} N}\sum_{l=1}^N \exp\left(\frac{-||x_i-x_l||^2}{2\epsilon_0\tilde\rho_0(x_i)\tilde\rho_0(x_l)} \right) = q(x_i) + \mathcal{O}\left(\epsilon_0,\frac{\sqrt{q(x_i)}}{N^{1/2}\epsilon_0^{d/4}\tilde\rho_0(x_i)^{d/2}}\right),
\end{align}
where the estimate follows from \eqref{GSeq1} and \eqref{errorEst2} with high probability. We can then use $\rho \equiv q_0^{\beta} = q^{\beta} + \mathcal{O}(\epsilon_0)$ as the bandwidth function in the kernel $K_{\epsilon}^S$ below.  Balancing the two error terms in \eqref{preestimate} we find, $\epsilon_0 = \mathcal{O}\left(N^{-1/(1+d/4)}\right)$.  Notice that $\epsilon_0$ is significantly smaller than $\epsilon$ as required by balancing the error terms in Corollary~\ref{corollary}, so that $\rho = q^{\beta} + \mathcal{O}(\epsilon)$ as required in Corollary~\ref{corollary}.

Using the bandwidth function $\rho$ estimated as above, we now evaluate the kernel $K_{\epsilon}^S$ on all pairs from the data set, and normalize following Theorem \ref{mainresult} to form $L_{\epsilon,\alpha,\beta}^S$ as,
\begin{align}
K^S_{\epsilon}(x_i,x_j) &= \exp\left\{\frac{-||x_i-x_j||^2}{4\epsilon \rho(x_i)\rho(x_j)}\right\} &\hspace{10pt} q^S_{\epsilon}(x_i) &= \sum_{j=1}^N \frac{K_{\epsilon}(x_i,x_j)}{\rho(x_i)^d}   \nonumber \\
K^S_{\epsilon,\alpha}(x_i,x_j) &= \frac{K^S_{\epsilon}(x_i,x_j)}{q^S_{\epsilon}(x_i)^{\alpha}q^S_{\epsilon}(x_j)^{\alpha}} &\hspace{10pt} q^S_{\epsilon,\alpha}(x_i) &= \sum_{j=1}^N K^S_{\epsilon,\alpha}(x_i,x_j) \nonumber \\ 
\hat K^S_{\epsilon,\alpha}(x_i,x_j) &= \frac{K^S_{\epsilon,\alpha}(x_i,x_j)}{q^S_{\epsilon,\alpha}(x_i)} &\hspace{10pt} L^S_{\epsilon,\alpha,\beta}(x_i,x_j) &= \frac{\hat K^S_{\epsilon,\alpha}(x_i,x_j)-\delta_{ij}}{\epsilon\rho(x_i)^2}. \nonumber
\end{align}
Note that the kernel $K^S_{\epsilon,\alpha}$ is symmetric, however, due to the left normalization, the kernel $\hat K^S_{\epsilon,\alpha}$ is not symmetric and the need to normalize by $\rho(x_i)^2$ further degrades the symmetry in $L^S_{\epsilon,\alpha,\beta}$.  Since we are interested in the eigenvalues and eigenvectors of $L^S_{\epsilon,\alpha,\beta}$, we instead construct a symmetric matrix which is given by conjugation of $L^S_{\epsilon,\alpha,\beta}$.  

Let $D_{ii} = q^S_{\epsilon,\alpha}(x_i)$ and $P_{ii} = \rho(x_i)$ be diagonal $N\times N$ matrices and define the symmetric matrix, $K_{ij} = K^S_{\epsilon,\alpha}(x_i,x_j)$.  Let $L_{ij} = L^S_{\epsilon,\alpha,\beta}(x_i,x_j)$ be the desired normalized Laplacian matrix.  Note that,  $L =P^{-2}(D^{-1}K - I)/\epsilon$ and since $P$ and $D$ are diagonal, we can form the conjugation of $L$ by the diagonal matrix $S = PD^{1/2}$ to find,
\[ S LS^{-1} = \frac{1}{\epsilon} PD^{1/2}P^{-2}(D^{-1}K - I) D^{-1/2}P^{-1} = \frac{1}{\epsilon}P^{-1}(D^{-1/2}KD^{-1/2} -I)P^{-1} = \frac{1}{\epsilon}(S^{-1}K S^{-1} - P^{-2}) . \]
So we define the symmetric matrix $\hat L \equiv \frac{1}{\epsilon}(S^{-1}K S^{-1} - P^{-2})$ with entries,
\[ \hat L_{ij} = \frac{1}{\epsilon \rho(x_i)\rho(x_j)}\left( \frac{K_{\epsilon,\alpha}^S(x_i,x_j)}{\sqrt{q_{\epsilon,\alpha}^S(x_i)q_{\epsilon,\alpha}^S(x_j)}} - \delta_{ij} \right). \]
In order to find the eigenvectors of $L$, we first find the eigen-decompostion of $\hat L = \hat U\Lambda \hat U^\top$, and then note that  setting $U = S^{-1}\hat U$ we have,
\[ LU = S^{-1}\hat L S U = S^{-1}\hat L S S^{-1} \hat U = S^{-1}\hat L \hat U = S^{-1}\Lambda \hat U = \Lambda U, \]
since $S$ is diagonal.  Thus, the columns of $U$ are the desired eigenvectors of $L$ with associated eigenvalues given by $\Lambda$.

In order to make a comparison between the true eigenfunctions and the eigenvectors which approximate them, we must make sure to scale the eigenvectors appropriately.  Since the eigenvectors approximate the eigenfunctions evaluated on the data set itself, they are sampled according to the density $q$.  For an eigenvector $\vec\phi = (\phi_1,...,\phi_N)^\top$, where $\phi_i$ approximates an eigenfunction $\phi$ evaluated at $x_i$, $\phi(x_i)$, we can estimate the normalization factor as a Monte-Carlo integral given by,
\[ ||\phi||_{L^2(q)} = \left(\int \phi(x)^2 q(x)dx \right)^{1/2} = \left( \lim_{N\to\infty}\frac{1}{N} \sum_{i=1}^N \phi(x_i)^2 \right)^{1/2}. \]
This implies that we should normalize the vector $\vec\phi$ so that $||\vec\phi||_{\mathbb{R}^N} = \left(\sum_{i=1}^N \phi_i^2\right)^{1/2} = \sqrt{N}$.  A further complication is the possibility of repeated eigenvalues, especially for a symmetric domain and potential.  The numerical approximations to the eigenfunctions which correspond to a repeated eigenvalue can be any orthogonal transformation of the true eigenfunctions with the given eigenvalue.  For visual comparison we compute this orthogonal transformation using the known eigenfunctions and apply it to the numerical eigenfunctions for each repeated eigenvalue.

The matrices formed above will all be $N\times N$ where $N$ is the number of data points.  When $N$ becomes large this quickly leads to large memory requirements.  However, due to the exponential decay in the initial kernel $K_{\epsilon}^S(x_i,x_j)$, we can replace these values by zero when $x_j$ is far away from $x_i$.  In the examples below we will use the typical algorithm of taking the $k$ nearest neighbors of each point $x_i$ and allowing those to be the only nonzero values in the matrix.  Notice that the kernel matrix formed using only the $k$-nearest neighbors may not be symmetric since the nearest neighbor relationship is not reflexive.  If we are forming a kernel matrix $K$ which should be symmetric using only the $k$-nearest neighbors, we always immediately replace $K$ with $(K+K^\top)/2$ which is symmetric and still sparse.  This sparse representation will only use $\mathcal{O}(Nk)$ memory rather than $\mathcal{O}(N^2)$ and will give similar results for $\epsilon$ small enough that the truncated entries are already very close to zero.  Since the initial kernel matrix is sparse, all the remaining matrices are simply multiplication and subtraction of sparse matrices by diagonal matrices and hence all the matrices constructed above will be sparse.  We then use a sparse eigenvalue solver to find the desired number of eigenvectors of $\hat L$, with eigenvalues closest to $\lambda_0 = 0$.

\begin{figure}
\centering
\includegraphics[width=0.4\textwidth]{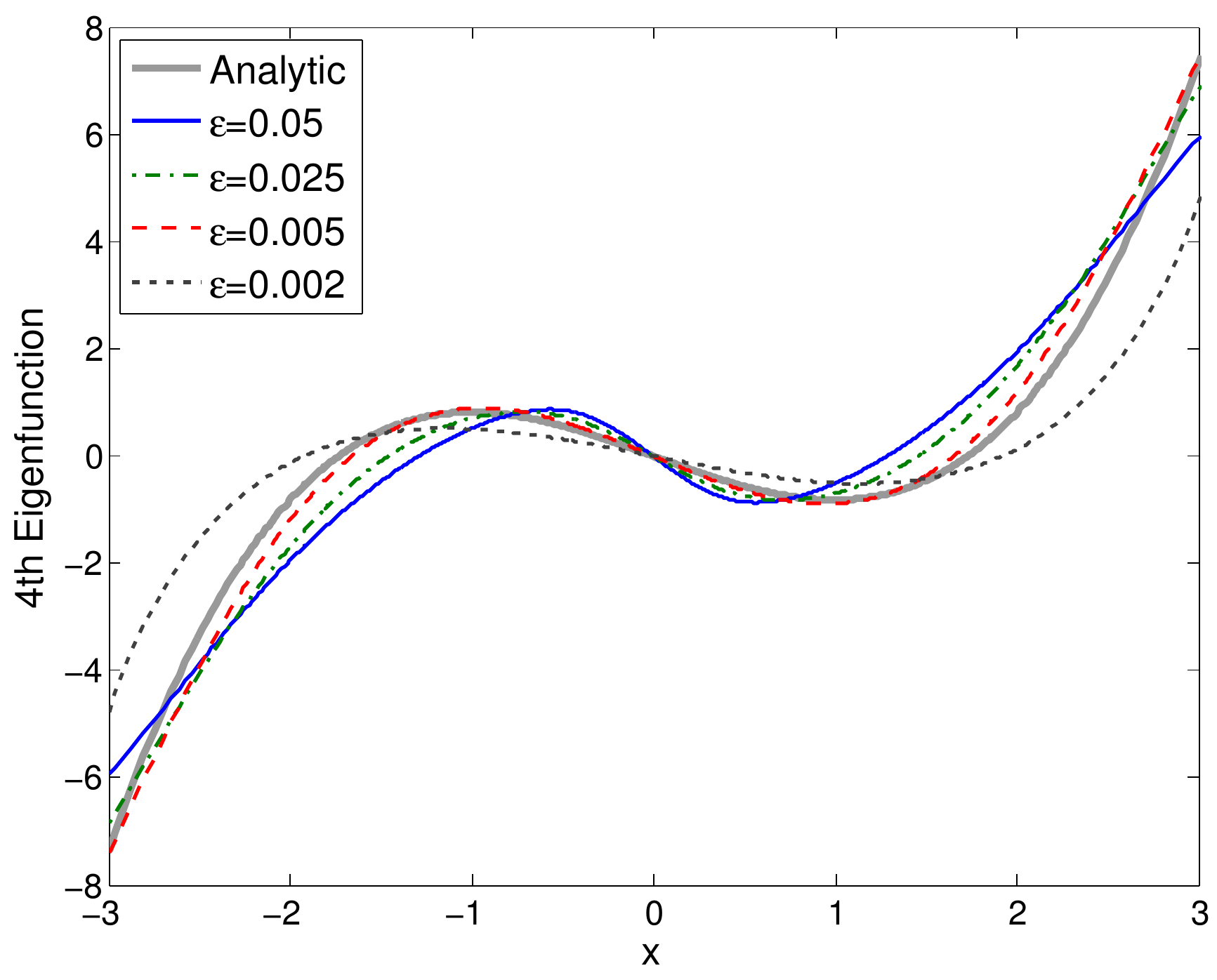}
\includegraphics[width=0.4\textwidth]{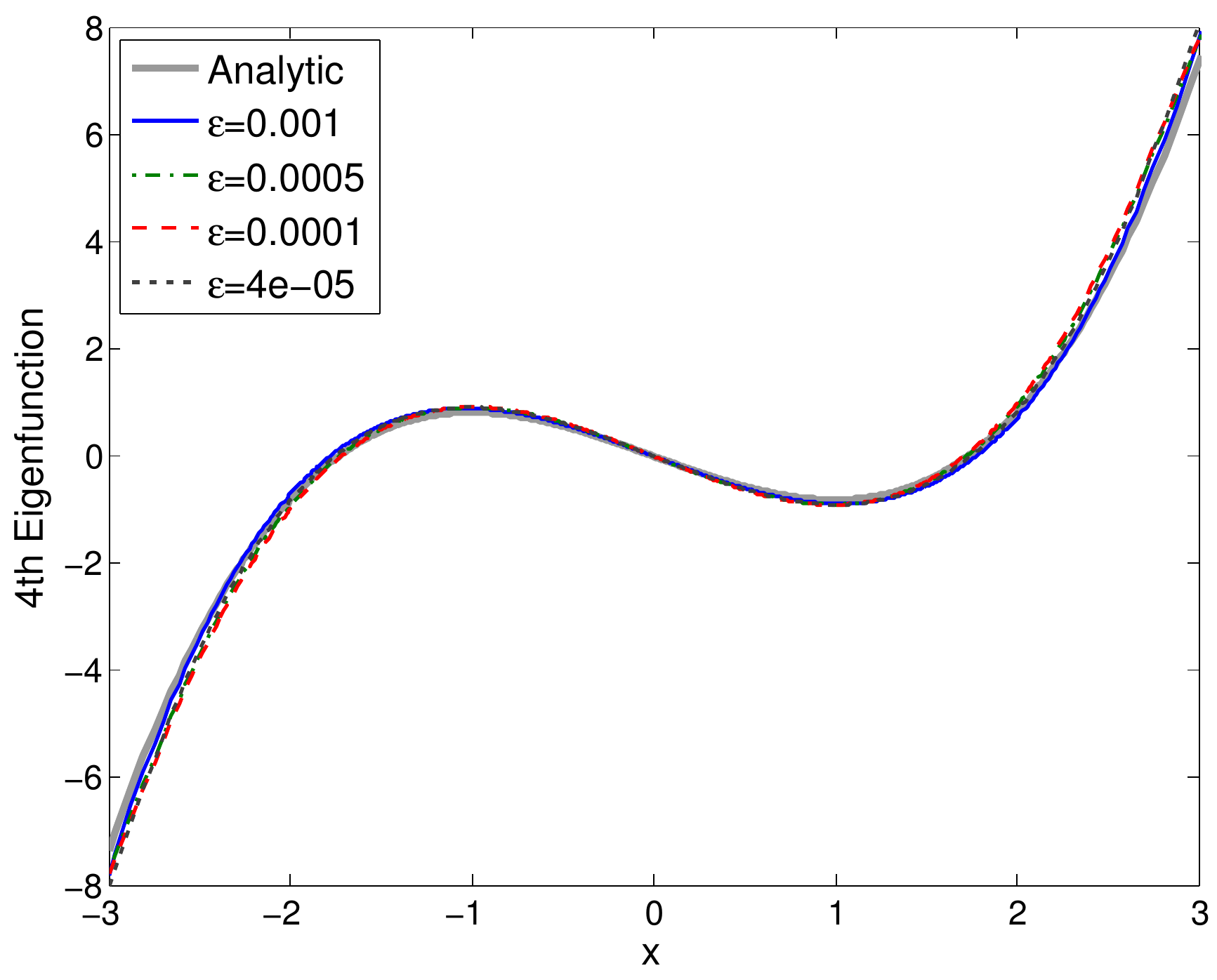}
\caption{\label{sensitivityFig} 
Fixed bandwidth sensitivity (left, $\alpha=1/2, \beta=0$) compared to variable bandwidth sensitivity (right, $\alpha=-1/4,\beta=-1/2$) for the fourth eigenfunction of the generator of the Ornstein-Uhlenbeck process with 2000 data points sampled according to \eqref{nicedist}.  The $\epsilon$ values for the fixed bandwidth kernel are exactly $50$ times the corresponding values for the variable bandwidth kernel due to the difference in scaling.}
\end{figure}

\section{Application to Ornstein-Uhlenbeck processes on non-compact manifolds}\label{numerics}

In this section we show the improvement in operator estimation which is made possible by using variable bandwidth kernels rather than fixed bandwidth kernels on unbounded manifolds. As our first example, we consider the backward Kolmogorov operator for the Ornstein-Uhlenbeck process on the real line.  This process is driven by Brownian motion in a quadratic potential field $U(x) = \frac{1}{2} x^2$ with invariant measure given by a standard normal distribution, $q(x) \propto \exp(-U(x)) = \exp(-x^2/2)$.  

\begin{figure}
\centering
\includegraphics[width=0.4\textwidth]{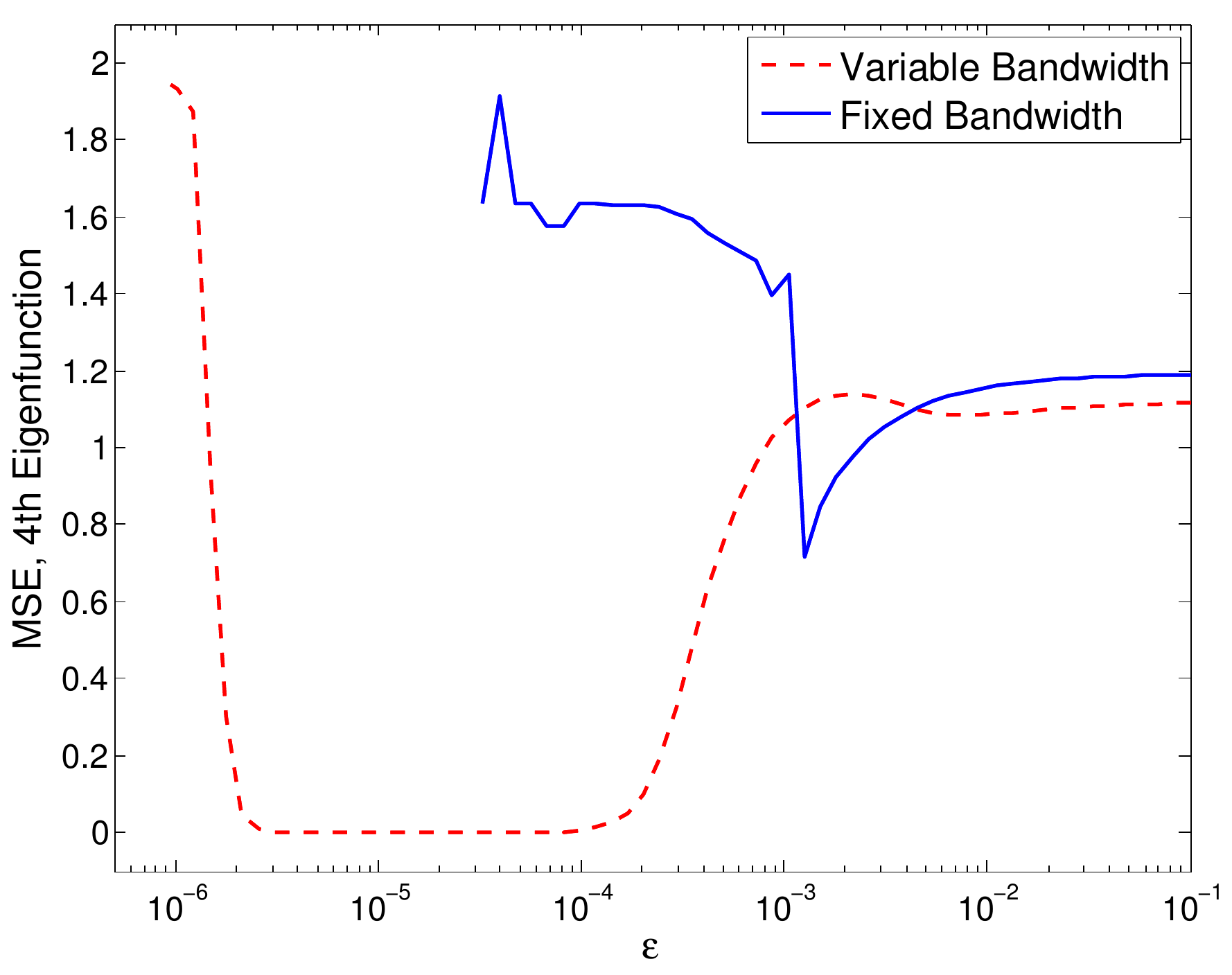}\includegraphics[width=0.4\textwidth]{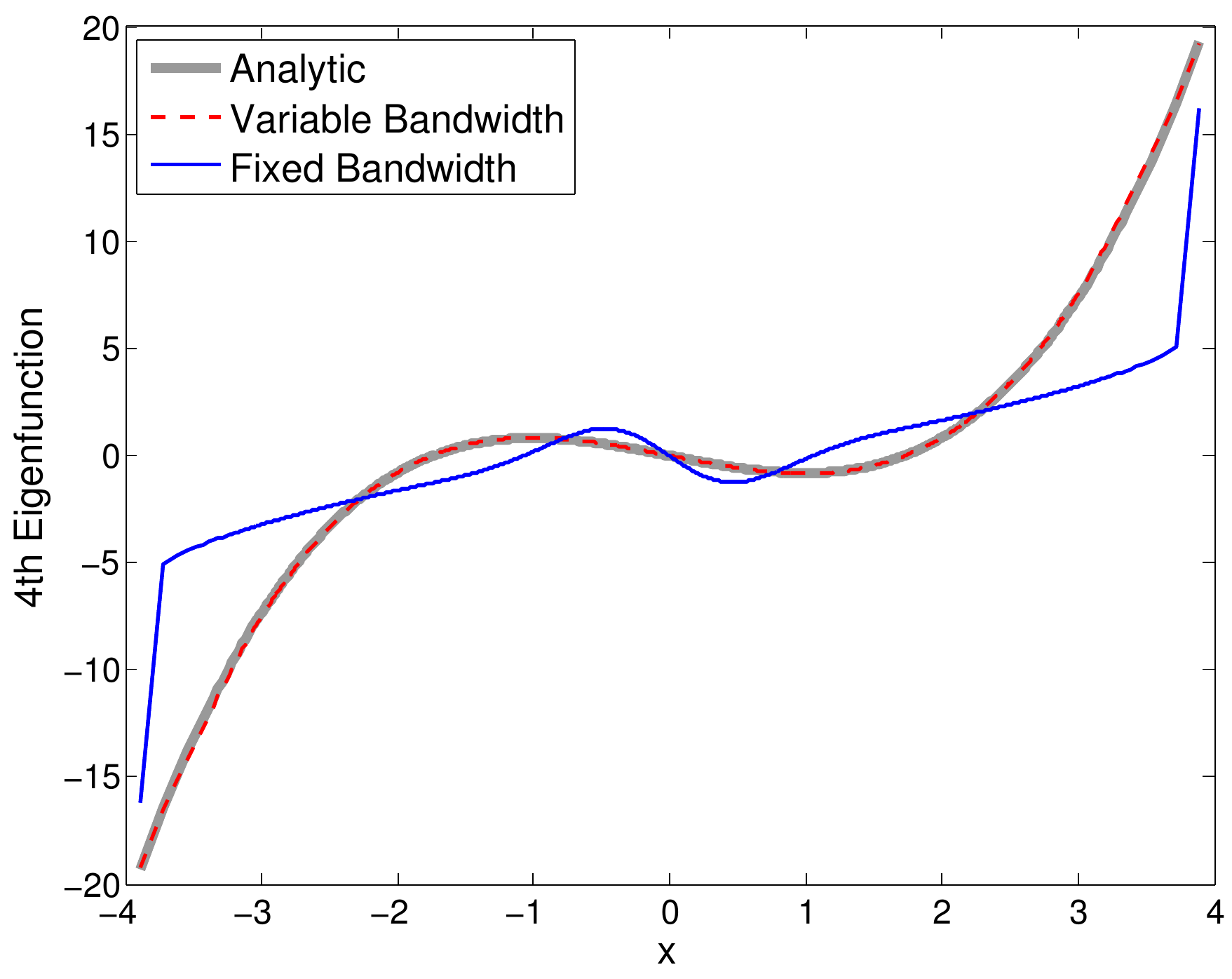}
\caption{\label{sensitivityLargeData} Variable bandwidth sensitivity (red dashed curve, $\alpha=-1/4, \beta=-1/2$) compared to fixed bandwidth sensitivity (blue solid curve, $\alpha=1/2,\beta=0$) for the fourth eigenfunction of the generator of the Ornstein-Uhlenbeck process with 20000 data points sampled according to \eqref{nicedist}.  Left: The mean squared error between the analytic fourth eigenfunction and the kernel based approximations as a function of $\epsilon$.  Right: The analytic fourth eigenfunction is compared to the kernel based approximations with $\epsilon$ chosen to minimize the mean squared error for each kernel.}
\end{figure}

In \ref{dmextension} we show that the continuous theory of \cite{diffusion} easily extends to non-compact manifolds such as $\mathbb{R}$ for functions $f$ which are integrable with respect to the sampling measure $q$. The problem arises when we try to approximate the integral operator $G_{\epsilon}^S$ in \eqref{a1} with a discrete sum evaluated on a random data set.  We first demonstrate that even for a `nice' sample set, the fixed bandwidth kernel can only approximate the operator $\mathcal{L}_{\alpha,0}$ in \eqref{generator_ab} when $N$ is small and $\epsilon$ is carefully tuned.  Rather than sampling randomly from a standard normal distribution, we first generate a `nice' sample set with $N=2000$ points by setting $\delta = (N+1)^{-1}$ and generating a uniform grid $\{\tilde x_i = \delta i\}_{i=1}^{N}$. We then apply the inverse of the cumulative distribution function to the uniform grid so that, 
\begin{align} \label{nicedist} x_i = \sqrt{2} \textup{erf}^{-1}(2 \tilde x_i -1), \end{align}
and the histogram or kernel density estimate of the $\{x_i\}$ will converge to the standard normal density.  The advantage of the `nice' data set is that it is not randomly sampled so there is no possibility of outliers or finite sampling deviations from the true density which are present in any random sample set.  In Figure \ref{sensitivityFig}, we compare the fourth eigenvector of the operator $L_{\epsilon,\alpha,\beta}^S$ with the analytic eigenfunction $H_3(x) = \frac{1}{\sqrt{6}}(x^3-3 x)$ of the generator of the Ornstein-Uhlenbeck process. Notice that for a careful selection of $\epsilon$ the eigenvector approximation from the fixed bandwidth kernel ($\alpha = 1/2,\beta =0$) agrees with the analytic eigenfunction.  In contrast, the variable bandwidth kernel gives a much better approximation over a larger range of $\epsilon$ values.  This demonstrates the first valuable aspect of variable bandwidth kernels, which is that they reduce the sensitivity to $\epsilon$.  Moreover, when the true eigenfunction is unknown, it is difficult to determine which value of $\epsilon$ in the fixed bandwidth kernel is giving the best approximation.  The fact that the variable bandwidth kernel gives a stable result, which \emph{persists} across a large range of bandwidth choices, suggests that it may be possible to choose the bandwidth automatically based on stationarity of the solution.

In the limit of large data, the difficulty in applying the fixed bandwidth kernel becomes more severe.  In Figure \ref{sensitivityLargeData} we compare the variable bandwidth and fixed bandwidth approximations of the fourth eigenfunction for $N=20000$ data points sampled according to \eqref{nicedist}.  In this case, there is no value of $\epsilon$ which gives reasonable results for the fixed bandwidth kernel, as shown in the left panel where we show the RMSE as a function of $\epsilon$ for a large range of $\epsilon$ values, and in the right panel where we plot the eigenfunction which minimized the RMSE.  On the other hand, the variable bandwidth kernel once again returns excellent and stable results over a wide range of values of $\epsilon$.  While it may seem counterintuitive that the fixed bandwidth kernel performs worse with more data, the error bound in Corollary~\ref{corollary} suggests exactly this effect.  Since $c_2>0$ for the fixed bandwidth kernel, the third error bound in Theorem \ref{mainresult} diverges as the sampling density $q$ approaches zero.  When the data set is small, it is unlikely that many of the samples are in areas of small sampling density.  As the size of the data set is increased, the data set contains more points in areas of small sampling density and the minimum value of $q(x_i)$ decreases, causing the error bound to diverge. 

\begin{figure}
\centering
\includegraphics[width=0.4\textwidth]{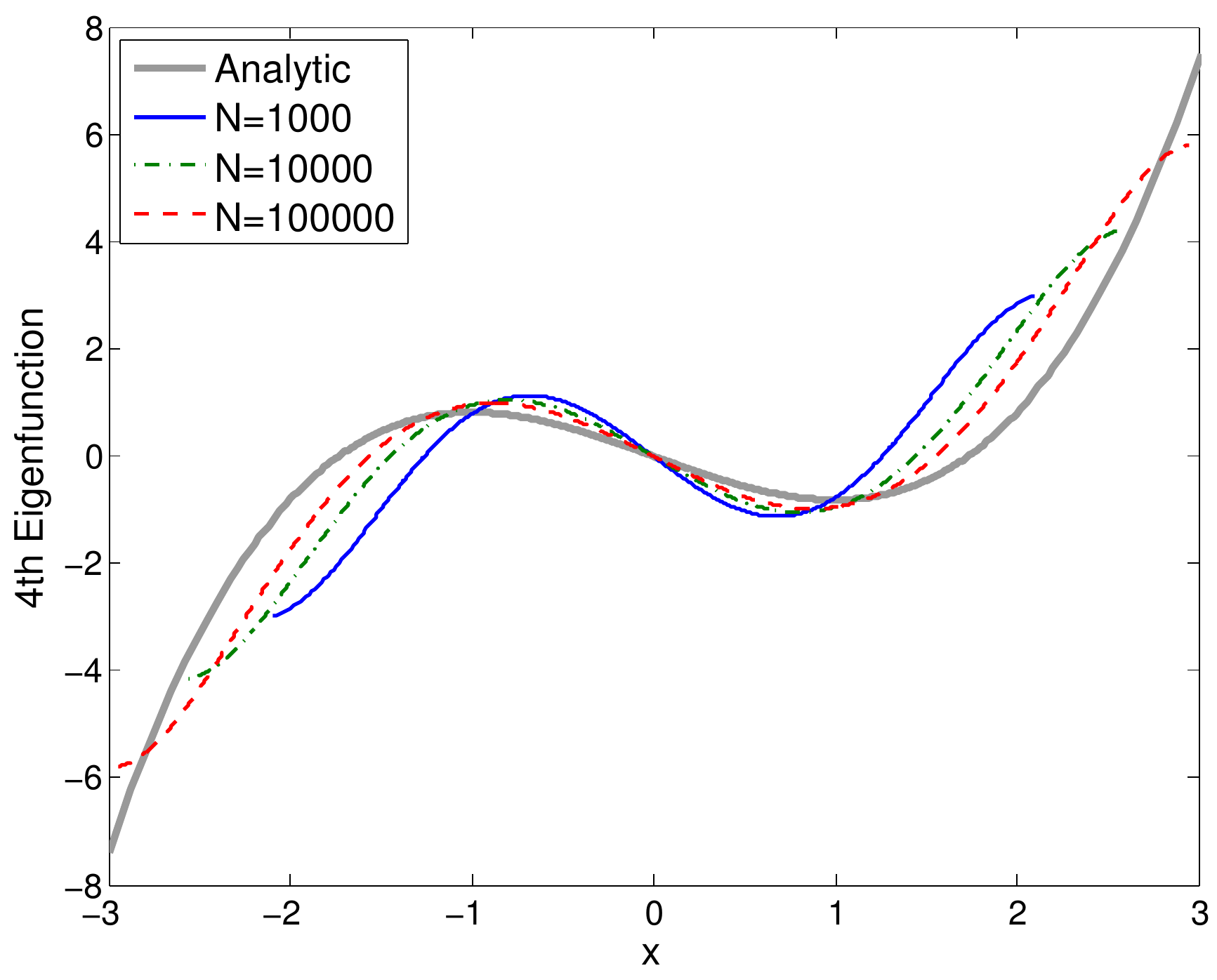}\includegraphics[width=0.4\textwidth]{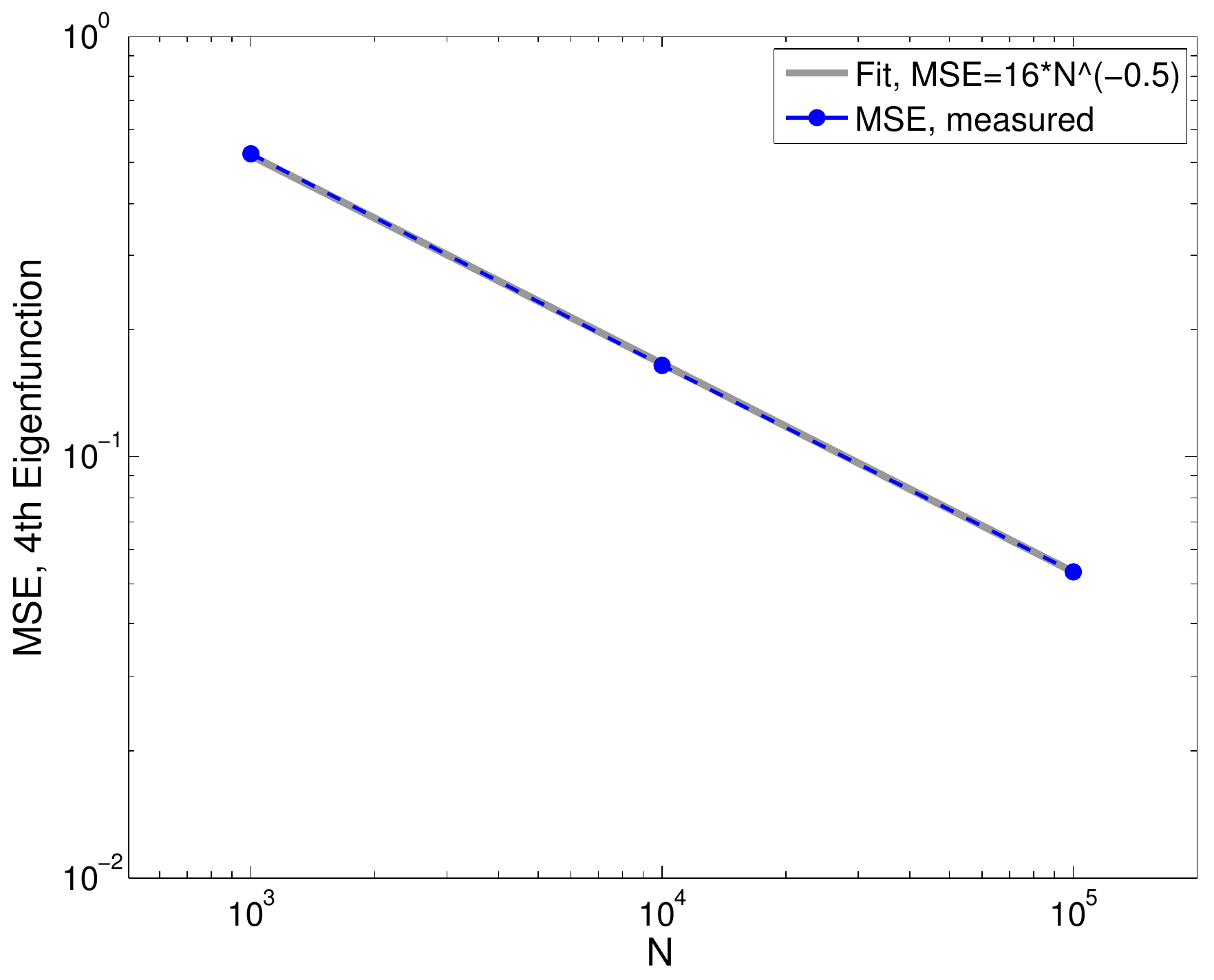}
\caption{\label{outlierTest} Left: Comparison of the analytic fourth eigenfunction of generator for the Ornstein-Uhlenbeck process with the discrete approximations produced with the fixed bandwidth kernel ($\alpha=1/2,\beta=0$) with various amounts $N$ of data where $\sqrt{N}$ outlier points are removed prior to the analysis.  Right: Mean squared error (MSE) over data points $-2\leq x \leq 2$ for $N=1000, 10000$, and $100000$ along with power law fit.  If the power law persists, achieving the MSE of $0.002$ of the variable bandwidth kernel which used only 1000 data points, would require approximately $6\times 10^{7}$ data points for the fixed bandwidth kernel.}
\end{figure}

One may hope to solve the issue of the error bound increasing in areas of small sampling by simply removing the outliers.  Indeed such a procedure was suggested in \cite{singerOutlierRemoval} and while this can improve results for small data sets, it has many unintended consequences.  The artificial modification of the data set implicitly creates an absorbing boundary condition at a \emph{virtual boundary} between the remaining data and the removed outliers.  This virtual boundary significantly affects the spectral properties of the discrete operator $L_{\epsilon}^S$ when $\beta = 0$.  The structural error in the operator approximation due to the virtual boundary leads to extremely poor convergence properties for the eigenvalues and eigenfunctions of $L^S_{\epsilon}$ for $\beta = 0$.  In Figure \ref{outlierTest} we show that even with $N=100000$ data points distributed `nicely' according to \eqref{nicedist}, removing only 316 outlier points (those of smallest probability) has a significant effect on the eigenfunction approximation.  This strategy may still have problems as $N$ increases, but even assuming that the trend in Figure \ref{outlierTest} persists for large $N$, achieving the mean squared error of a variable bandwidth kernel applied to $1000$ data points would require approximately $6\times 10^{7}$ data points with a fixed bandwidth kernel using the outlier removal strategy.

\begin{figure}
\centering
\includegraphics[width=0.4\textwidth]{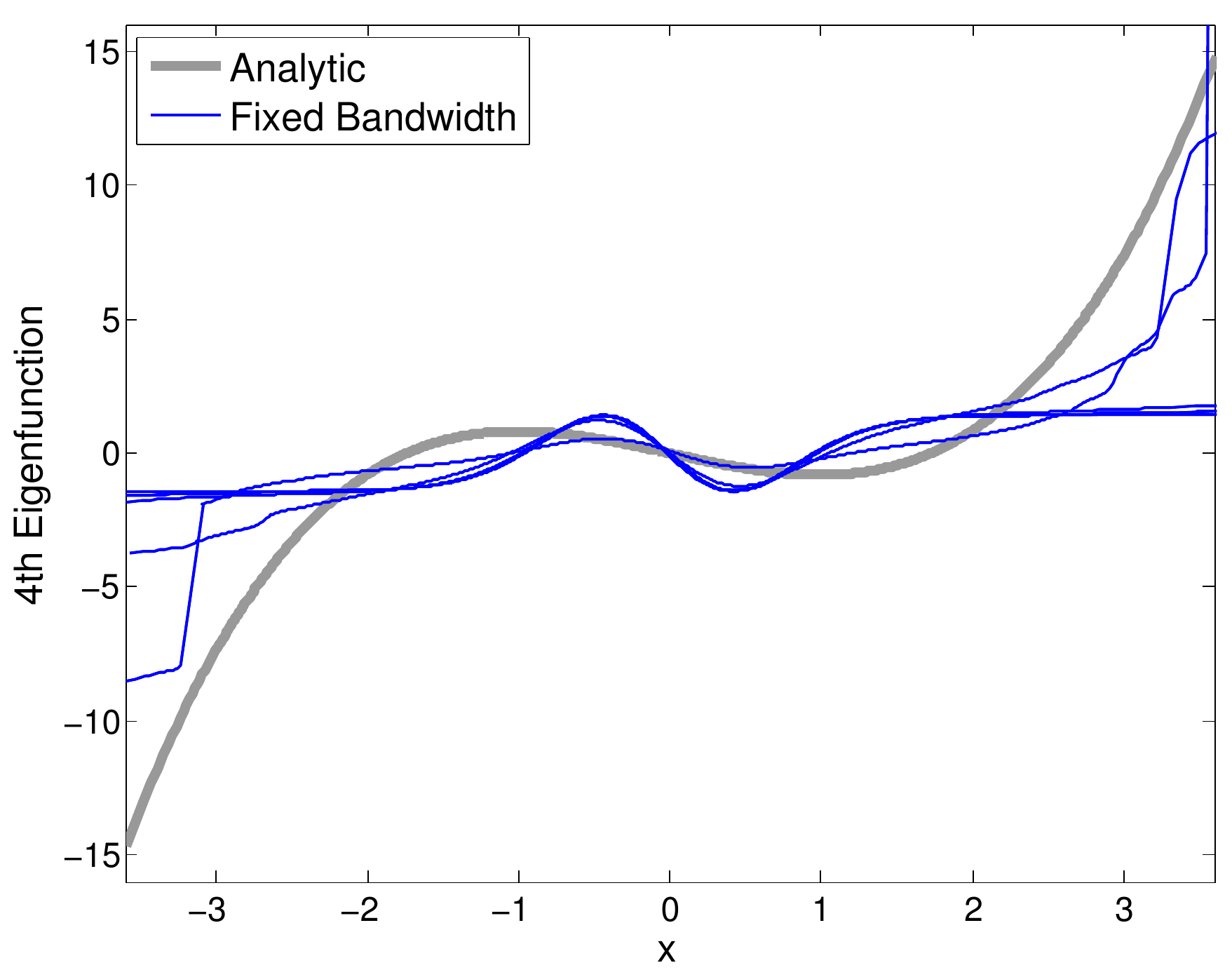}
\includegraphics[width=0.4\textwidth]{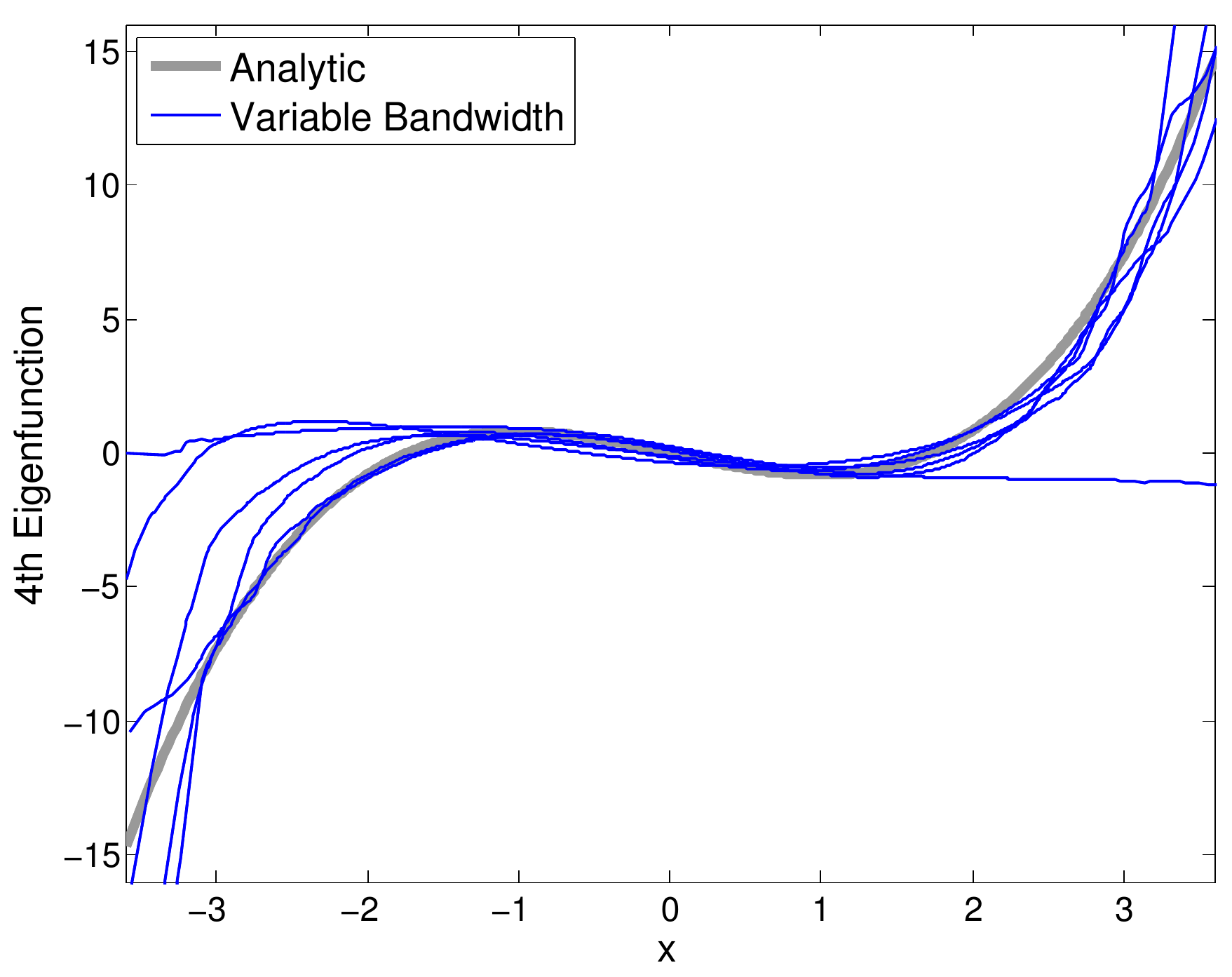}
\caption{\label{randomsamples} Comparison of fixed bandwidth (left) and variable bandwidth (right) kernels for estimating the fourth eigenfunction of the generator for the Ornstein-Uhlenbeck process using 10 randomly generated data sets with $N=20000$ points each. Left: For each of the 10 data sets, the fixed bandwidth kernel ($\alpha=1/2, \beta=0$) is used to approximate the fourth eigenfunction using 65 separate values of $\epsilon$ ranging from $10^{-5}$ to $1$ and $\epsilon$ is chosen to minimize the mean squared error between the analytic fourth eigenfunction and the kernel based approximation.  Right: The same analysis performed on the same 10 data sets with the variable bandwidth kernel ($\alpha=-1/4, \beta=-1/2$). }
\end{figure}

We now consider the case of randomly sampled data, where $x_i$ are independently sampled from a standard normal distribution, and demonstrate that fixed bandwidth kernels have even more significant limitations in this context.  In Figure \ref{randomsamples} we compare the fixed and variable bandwidth approximations with 10 randomly generated data sets of length $N=20000$.  Notice that none of the 10 fixed bandwidth approximations shown in Figure \ref{randomsamples} agree well with the correct eigenfunction.  In contrast, the variable bandwidth kernel shows significant improvement with the increase in data in accordance with Theorem \ref{mainresult}.  Note that one of the data sets resulted in a particularly poor approximation even with the variable bandwidth kernel, but this does not contradict Theorem \ref{mainresult} since the error bounds are pointwise and only obtained with high probability.  In other words, for fixed $N$ a particularly bad random sample can lead to poor estimates even with the variable bandwidth.

In our second example, we tested the variable bandwidth kernel on a two-dimensional plane (an unbounded manifold) by applying the method to data sampled independently from the invariant measure of a two-dimensional Ornstein-Uhlenbeck process.  This process is given by Brownian motion in the potential well $U(x,y) = (x^2 + y^2)/2$ and the invariant measure is a two-dimensional Gaussian distribution with covariance matrix equal to the identity matrix.  In Figure \ref{randomsamples2D} we show the analytic 4th eigenfunction of the generator of this process which is $\phi_4(x,y)=xy$ along with the estimates of this eigenfunction produced by the variable bandwidth ($\alpha=-1/2, \beta=-1/2$) and fixed bandwidth ($\alpha=1/2,\beta=0$) kernels. While the estimate with variable bandwidth kernel looks reasonably closed to the analytic eigenfunction, the estimates with fixed bandwidth kernel are not accurate at all despite empirical tuning of $\epsilon$ over a wide range of values.

\begin{figure}
\centering
\includegraphics[width=0.3\textwidth]{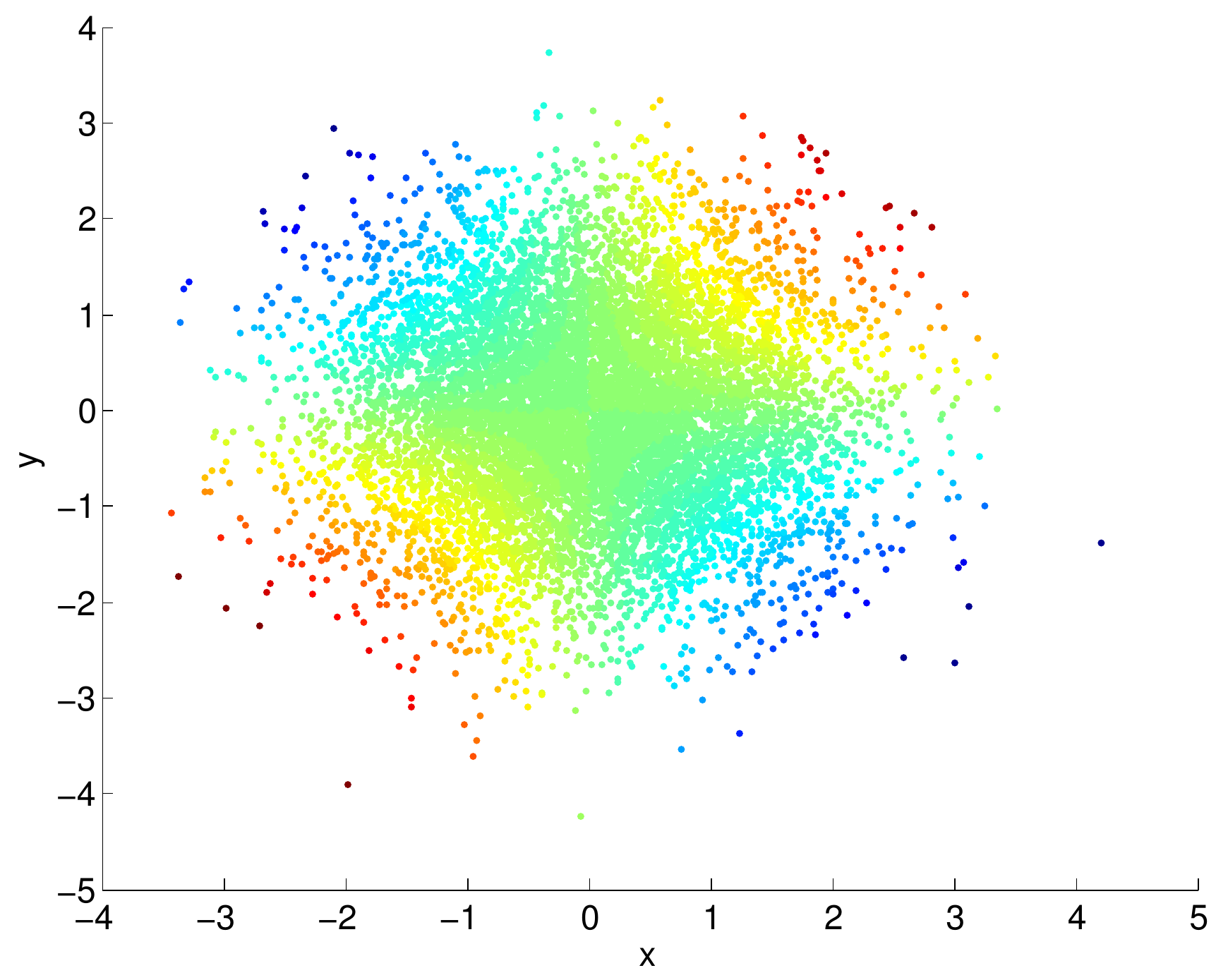}
\includegraphics[width=0.3\textwidth]{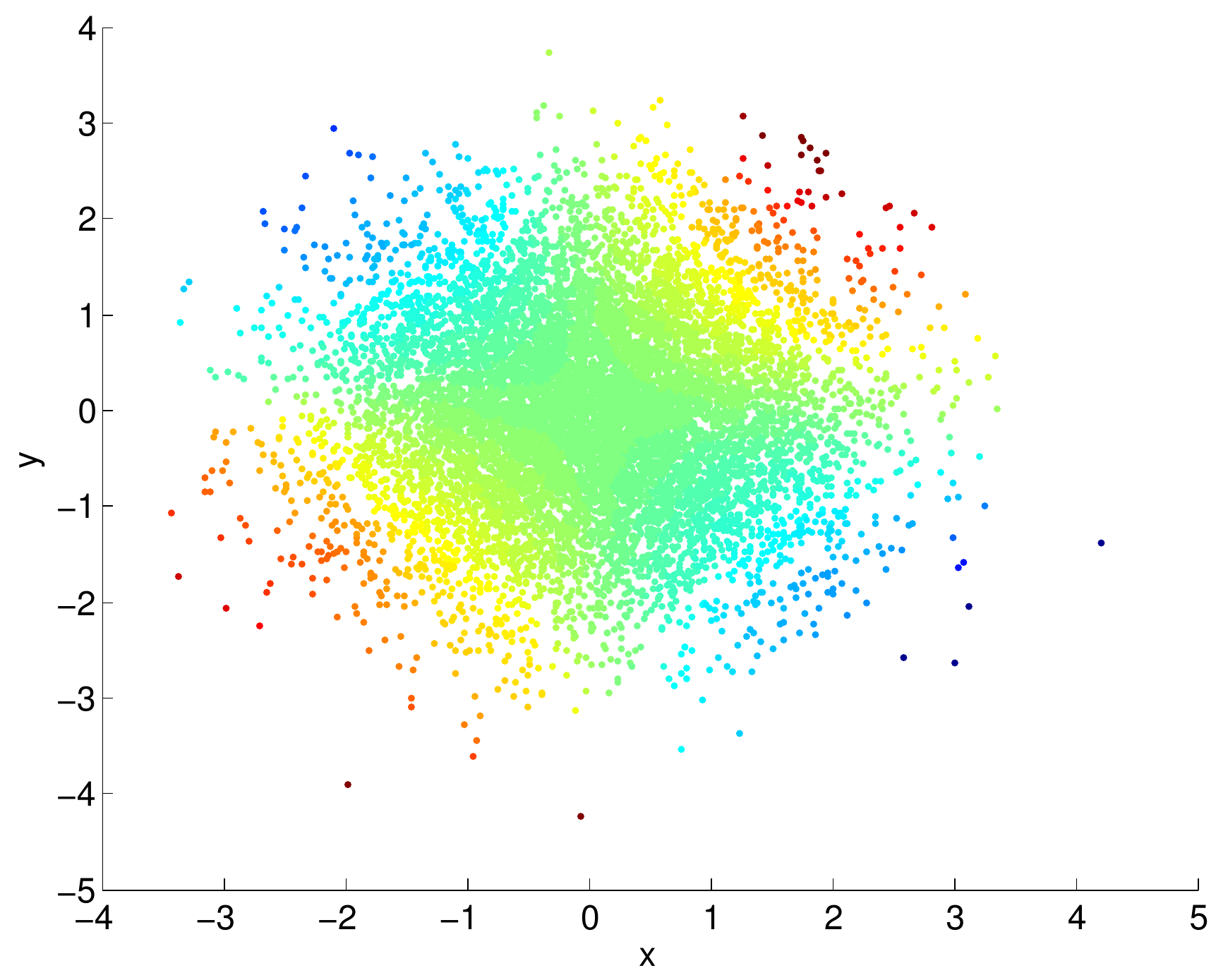}
\includegraphics[width=0.3\textwidth]{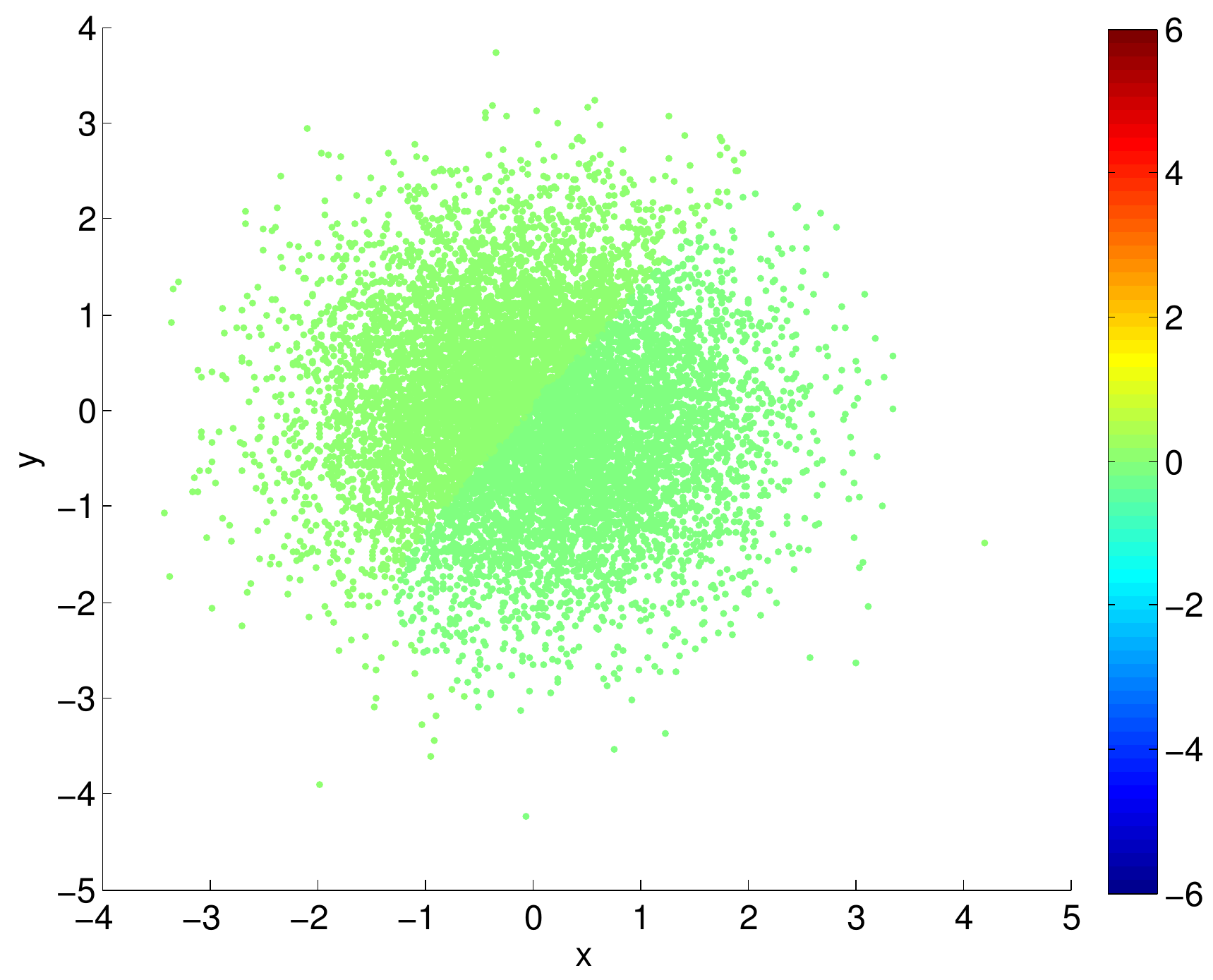}
\caption{\label{randomsamples2D} Comparison of the analytic (left) fourth eigenfunction of the 2-dimensional Ornstein-Uhlenbeck generator $\phi_4(x,y) = xy$ with the variable bandwidth (middle) kernel estimate and the fixed bandwidth (right) estimate.  The data set consist of 10000 points drawn independently from the invariant measure of the process, which is simply a standard Gaussian distribution in $\mathbb{R}^2$ with covariance structure given by the identity matrix.  For both kernels $\epsilon$ was chosen to minimize the squared error between the estimated and the analytic eigenfunctions.}
\end{figure}


\section{Application to non-uniformly sampled compact manifolds}\label{circleexample}

In this section we demonstrate that even on a compact manifold, when the sampling density $q$ is non-uniform, the variable bandwidth kernels still have many advantages.   As a first example, we consider a unit circle parameterized by $\theta \in [0,2\pi)$ with sampling density $q(\theta) = \frac{1}{4\pi}(2+\cos(\theta))$.  We can produce a grid of $N=1500$ points which have this distribution by numerically inverting the cumulative distribution function $F(\theta) = \frac{1}{4\pi}(2\theta + \sin(\theta))$ and applying it to a uniform grid $\{t_i = \delta i\}_{i=1}^{1500}$ where $\delta=1/1501$ so that $\theta_i = F^{-1}(t_i)$.  We then produce a data set in $\mathbb{R}^2$ via the standard embedding $x_i = (\cos(\theta_i),\sin(\theta_i))^\top$.  Since the manifold is compact and the sampling density is bounded away from zero, we can approximate the Laplacian on the circle $\Delta = \frac{d^2}{d\theta^2}$ from the data set $\{x_i\}$ with either the fixed bandwidth kernel ($\alpha=1,\beta=0$) or the variable bandwidth kernel ($\alpha=1/4,\beta=-1/2$).  However, each algorithm requires tuning the nuisance parameter $\epsilon$, which can be very difficult for large data sets where the computational times become restrictive.  In Figure \ref{circlesensitivity} we show that the variable bandwidth gives better results over a much larger range of $\epsilon$ values, which is an important consideration for practical applications.  We then check the sensitivity to randomness in the data set by perturbing each $\theta_i$ by a uniform random variable from $[0,0.5]$ and reducing modulo $2\pi$.  The analysis is repeated for the randomly distributed data set in Figure \ref{circlesensitivity} and again the variable bandwidth kernel yields a better approximation with reduced sensitivity on the bandwidth $\epsilon$ near the optimal choice.  Of course, the weakness of the variable bandwidth kernel is that it requires knowledge of the intrinsic dimension $d$, which can be costly and difficult to estimate especially for noisy data.  These results suggest that it may be worth the cost of estimating the dimension in order to recover a significantly improved approximation.

We also note that the variable bandwidth kernel seems to work well with an automated $\epsilon$-tuning method introduced in \cite{coifman2008TuningEpsilon}.  It was noted in \cite{coifman2008TuningEpsilon} that for a fixed bandwidth kernel in \eqref{DMkernel},
\comment{$K_{\epsilon}(x_i,x_j) = \exp(-||x_i-x_j||^2/(4\epsilon))$,}in the limit as $\epsilon \to 0$ the kernel approaches $0$, and in the limit as $\epsilon\to \infty$ the kernel approaches $1$ for all pairs of data points.  Moreover, they note that when $\epsilon$ is well tuned, the kernel localizes the data set so that,
\begin{align}\label{tuningfunction} S(\epsilon) = \frac{1}{N^2}\sum_{i,j} K_{\epsilon}(x_i,x_j) \approx \frac{1}{{\rm vol}(\mathcal{M})^2}\int_{\mathcal{M}}\int_{T_{x}\mathcal{M}} K_{\epsilon}(x,y) \, dy \, dV(x) \approx \int_{\mathcal{M}}  \frac{(4\pi\epsilon)^{d/2}}{{\rm vol}(\mathcal{M})^2} \,  dV(x) = \frac{(4\pi\epsilon)^{d/2}}{{\rm vol}(\mathcal{M})}, \end{align}
where $d$ is the dimension of the manifold and $T_{x}\mathcal{M} \cong \mathbb{R}^d$ is the tangent space at $x$.  Numerically, when $\epsilon$ is very large, $S(\epsilon)$ will approach $1$ and when $\epsilon$ is very small, $S(\epsilon)$ will approach $1/N$.  In \cite{coifman2008TuningEpsilon} they suggested that we can choose $\epsilon$ by evaluating $S(\epsilon)$ for a large range of values and searching for the region where $\log(S(\epsilon))$ grows linearly with respect to $\log(\epsilon)$.  In other words, for a good choice of $\epsilon$, the function $S(\epsilon)$ should be locally well approximated by a power law $S(\epsilon) \propto \epsilon^a$ where,
\[ a = \frac{d(\log S)}{d(\log \epsilon)} \approx \frac{\log(S(\epsilon+h))-\log(S(\epsilon))}{\log(\epsilon+h)-\log(\epsilon)} \]
is approximately the local slope near $\epsilon$ in a plot of $\log(S(\epsilon))$ versus $\log(\epsilon)$.
 A natural extension of this strategy is to evaluate $S(\epsilon)$ for $\epsilon_i = 2^i$ over a range of values such as $i=-30,-29,...,9,10$ and to maximize the slope $a_i = \frac{\log(S(\epsilon_{i+1}))-\log(S(\epsilon_i))}{\log(\epsilon_{i+1})-\log(\epsilon_i)}$.  We applied this procedure to both the fixed bandwidth kernel and the variable bandwidth kernel in Figure \ref{circlesensitivity} and found that both slopes were maximized for $\epsilon = 2^{-5} \approx 0.031$ and this value is highlighted in filled circles in the middle panel of Figure \ref{circlesensitivity}.  It is interesting to note that in the region of linearity, $\log(S(\epsilon)) = \frac{d}{2}\log(\epsilon) + \frac{d}{2}\log(4\pi)$ and the maximal value of the slope for both kernels in Figure \ref{circlesensitivity} is in fact $d/2 = 1/2$.  This suggests that it may be possible to estimate the dimension of the manifold as the maximum value of the slope, one possibility would be to used a fixed bandwidth kernel to estimate the dimension and then to use this estimate for the variable bandwidth kernel.  A more exhaustive comparison of this approach to other methods of estimating the dimension, such as \cite{HeinDimension,MaggioniDimension}, especially in the presence of noisy samples which do not lie exactly on the manifold, is beyond the scope of this paper.

\begin{figure}[h]
\centering
\includegraphics[width=0.333\textwidth]{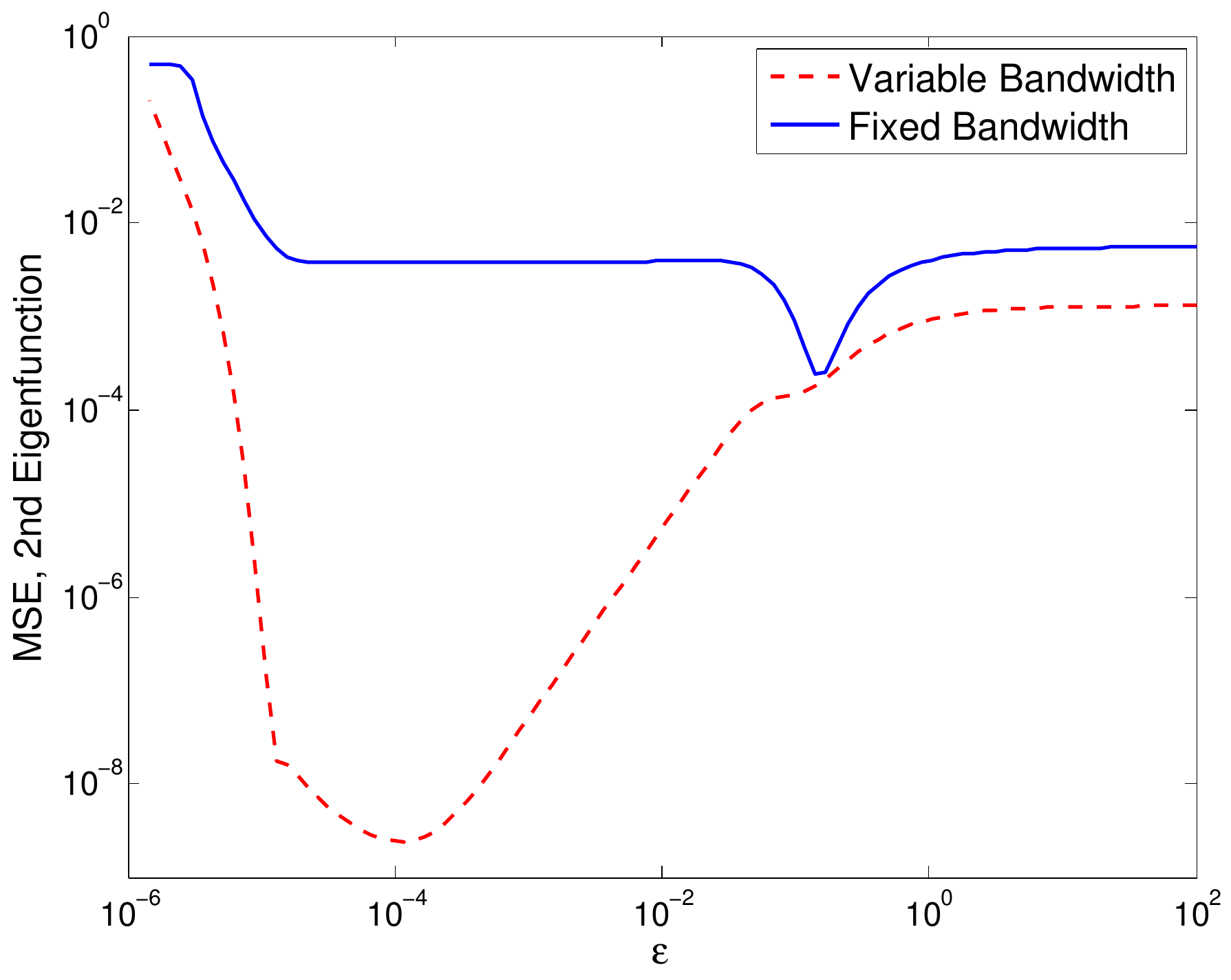}\includegraphics[width=0.33\textwidth]{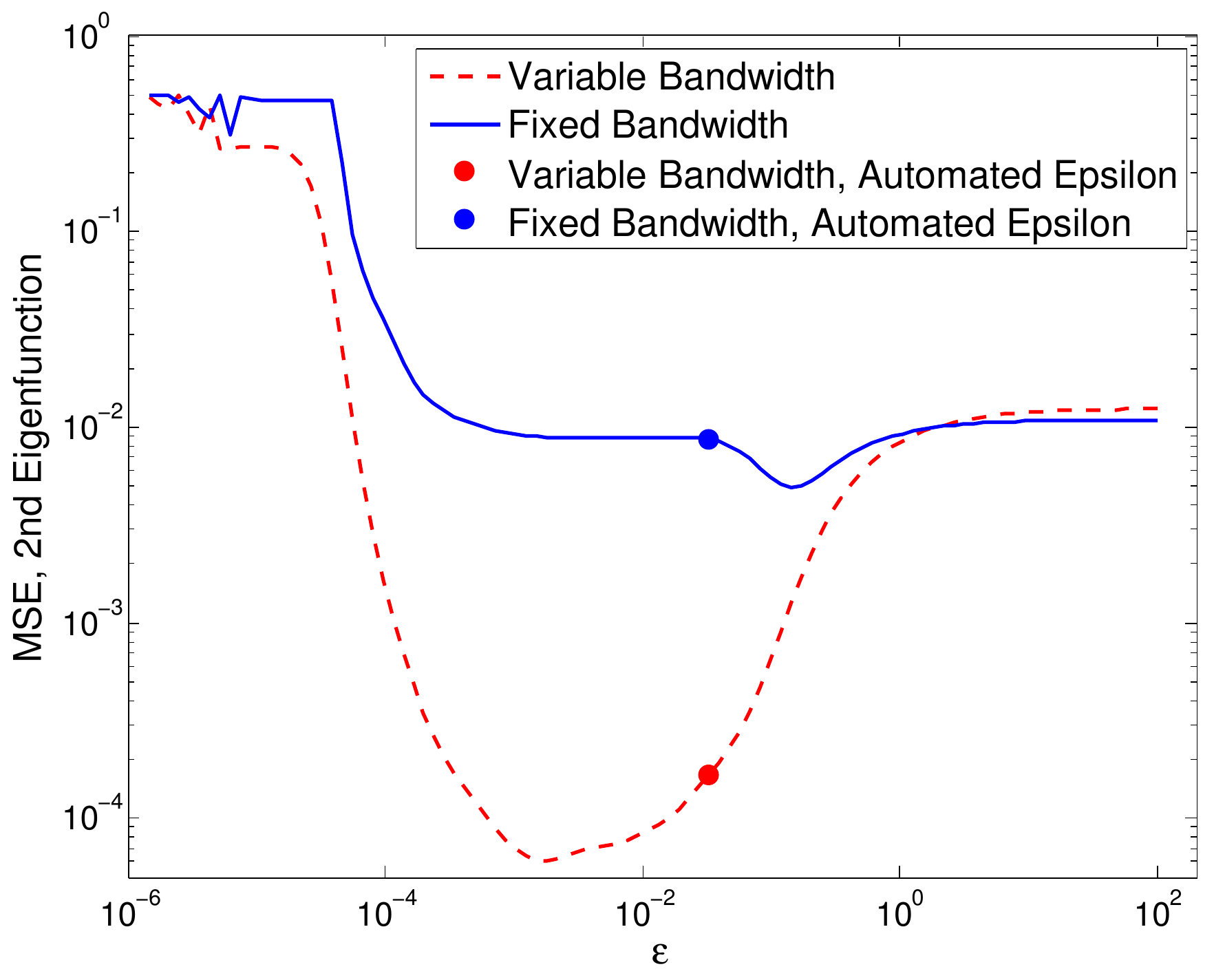}\includegraphics[width=0.333\textwidth]{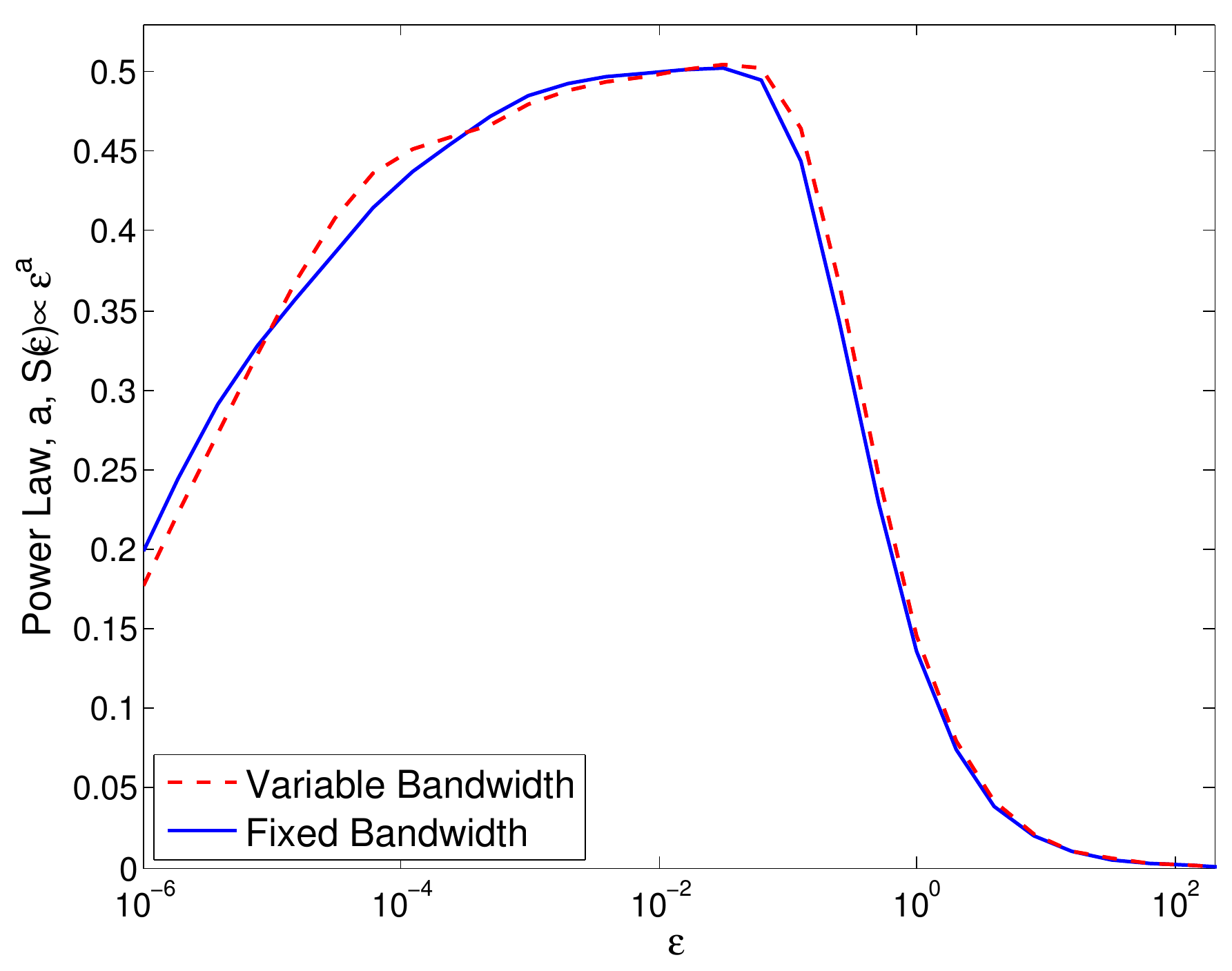}
\caption{\label{circlesensitivity} Sensitivity of the approximation of the second eigenfunction ($\sin(\theta)$) of the Laplacian on a unit circle sampled according to the density $q(\theta) = \frac{1}{4\pi}(2+\cos(\theta))$ with a variable bandwidth kernel (red dashed curves) and a fixed bandwidth kernel (blue solid curves).  Left: Mean squared error between the analytic eigenfunction and the respective approximations for the data set $\{x_i = (\cos(\theta_i),\sin(\theta_i))^\top\}$ where $\theta_i = F^{-1}(i/1501)$ for $i=1,...,1500$ and $F(\theta)= \frac{1}{4\pi}(2\theta + \sin(\theta))$ is the cumulative distribution function for $q$.  Middle: Same plot on a randomized data set where each $\theta_i$ was perturbed by a uniform random variable in $[0,0.5]$ ($\theta_i$ is then reduced modulo $2\pi$). We also show $\epsilon$ resulting from the automated choice of $\epsilon$ using the method of \cite{coifman2008TuningEpsilon} (see the filled circles).  Right: As a function of $\epsilon$, we plot the exponent, $a$, from the local power law in the tuning function $S(\epsilon) \propto \epsilon^a$ estimated as the slope $\frac{\log(S(\epsilon_{i+1}))-\log(S(\epsilon_i))}{\log(\epsilon_{i+1})-\log(\epsilon_i)}$.  Notice that the exponent has a maximum near $1/2=d/2$ as suggested by the theory of \cite{coifman2008TuningEpsilon}. }
\end{figure}

As a final example, consider a unit sphere in $\mathbb{R}^3$, for which the first three nontrivial eigenfunctions of the Laplacian are proportional to the standard coordinate functions, namely $\phi_1 \propto x, \phi_2\propto y$, and $\phi_3\propto z$.  To generate a non-uniform sampling on the sphere we generated 3000 random points in $\mathbb{R}^3$ from a Gaussian distribution with a randomly chosen $3\times 3$ covariance matrix and then projected these points onto the unit sphere (see Figure \ref{spheresensitivity}, top left panel).  We then applied both the variable bandwidth kernel (with $\alpha=0$ and $\beta=-1/2$) and the fixed bandwidth kernel (with $\alpha=1$ and $\beta=0$) for a large range of values of $\epsilon$ and found the mean squared error between the estimated eigenfunctions and the true analytic eigenfunctions.  Of course, due to the symmetry of the sphere, the first three nontrivial eigenfunctions have the same eigenvalue and any orthogonal linear transformation of the three eigenfunctions is also a valid basis for the same eigen-space.  To make a valid comparison it is necessary to find the best orthogonal linear transformation between the estimated eigenfunctions and the true eigenfunctions.  For a better visual comparison, we will simply find the best linear transformation between the estimated eigenfunctions and the original coordinates (since the original coordinates are proportional to the analytic eigenfunctions).  We also applied the automatic tuning method of \cite{coifman2008TuningEpsilon} and chose the value of $\epsilon$ that maximized the slope $\frac{\log(S(\epsilon_{i+1}))-\log(S(\epsilon_i))}{\log(\epsilon_{i+1})-\log(\epsilon_i)}$ computed from \eqref{tuningfunction}.  In Figure \ref{spheresensitivity} we show that the variable bandwidth kernel is less sensitive to the parameter $\epsilon$ and moreover, the automated tuning algorithm of \cite{coifman2008TuningEpsilon} produces the nearly optimal value of $\epsilon$.  In contrast, the automated choice of $\epsilon$ for the fixed bandwidth kernel produces very poor results, shown in the bottom middle panel of Figure \ref{spheresensitivity}.  As expected, on a compact manifold such as this, the fixed bandwidth kernel can yield good results, however it requires a carefully tuned $\epsilon$ (see bottom right of Figure~\ref{spheresensitivity}). We should note that we repeated this experiment several time for different randomly generated initial covariance matrices and the results shown in Figure \ref{spheresensitivity} are typical when the sampling on the sphere is significantly non-uniform.  When the sampling is close to uniform the performance of fixed bandwidth kernel is very close to that of the variable bandwidth kernel.  Finally, we note that the maximum exponent in the  local power law, $S(\epsilon) \propto \epsilon^a$, for the fixed bandwidth kernel is approximately $0.9$ which would yield a dimension estimate of $d = 2a = 1.8$ which is close to the true dimension.  

\begin{figure}[h]
\centering
\includegraphics[width=0.31\textwidth]{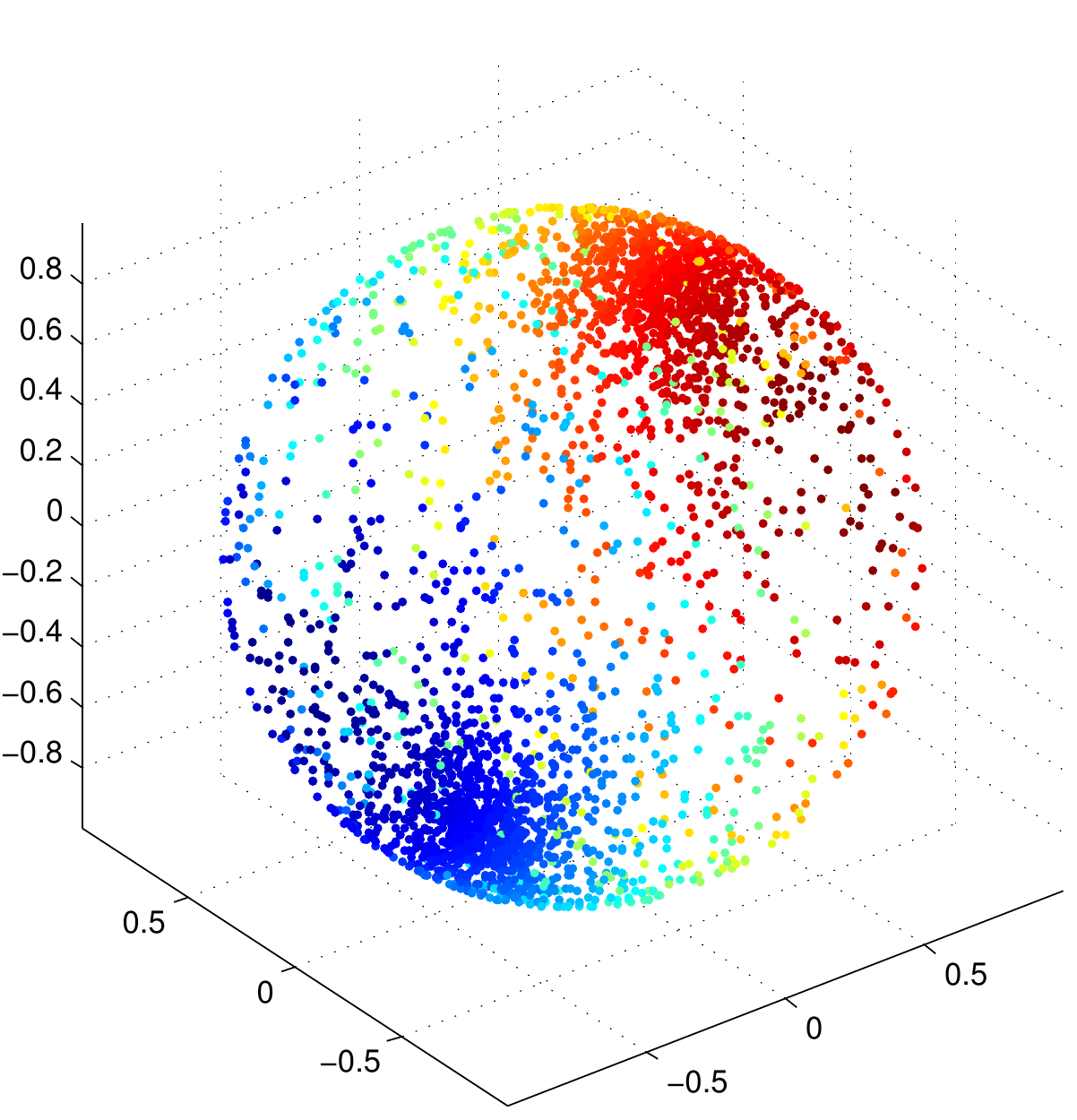}\includegraphics[width=0.336\textwidth]{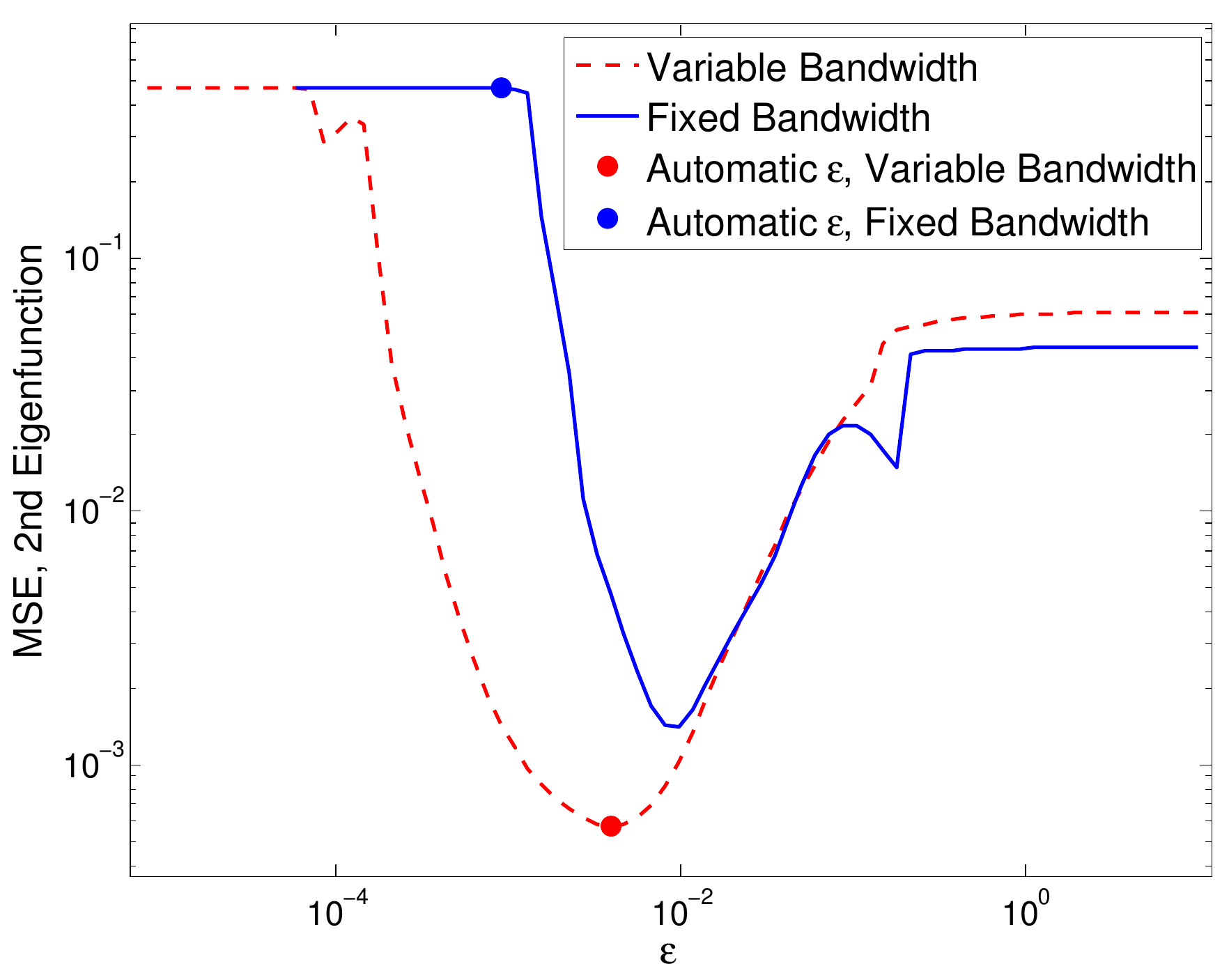}\includegraphics[width=0.34\textwidth]{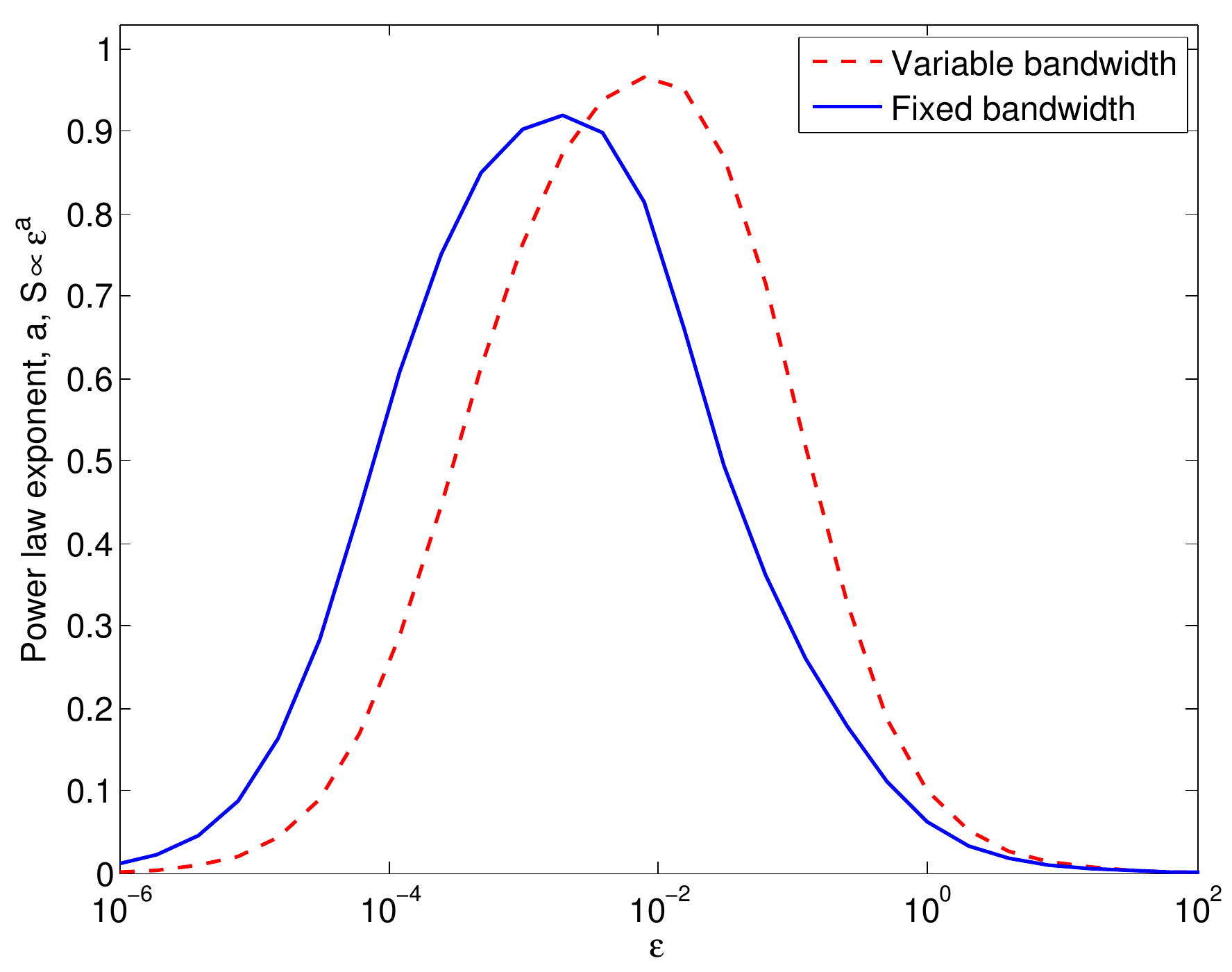}

\includegraphics[width=0.31\textwidth]{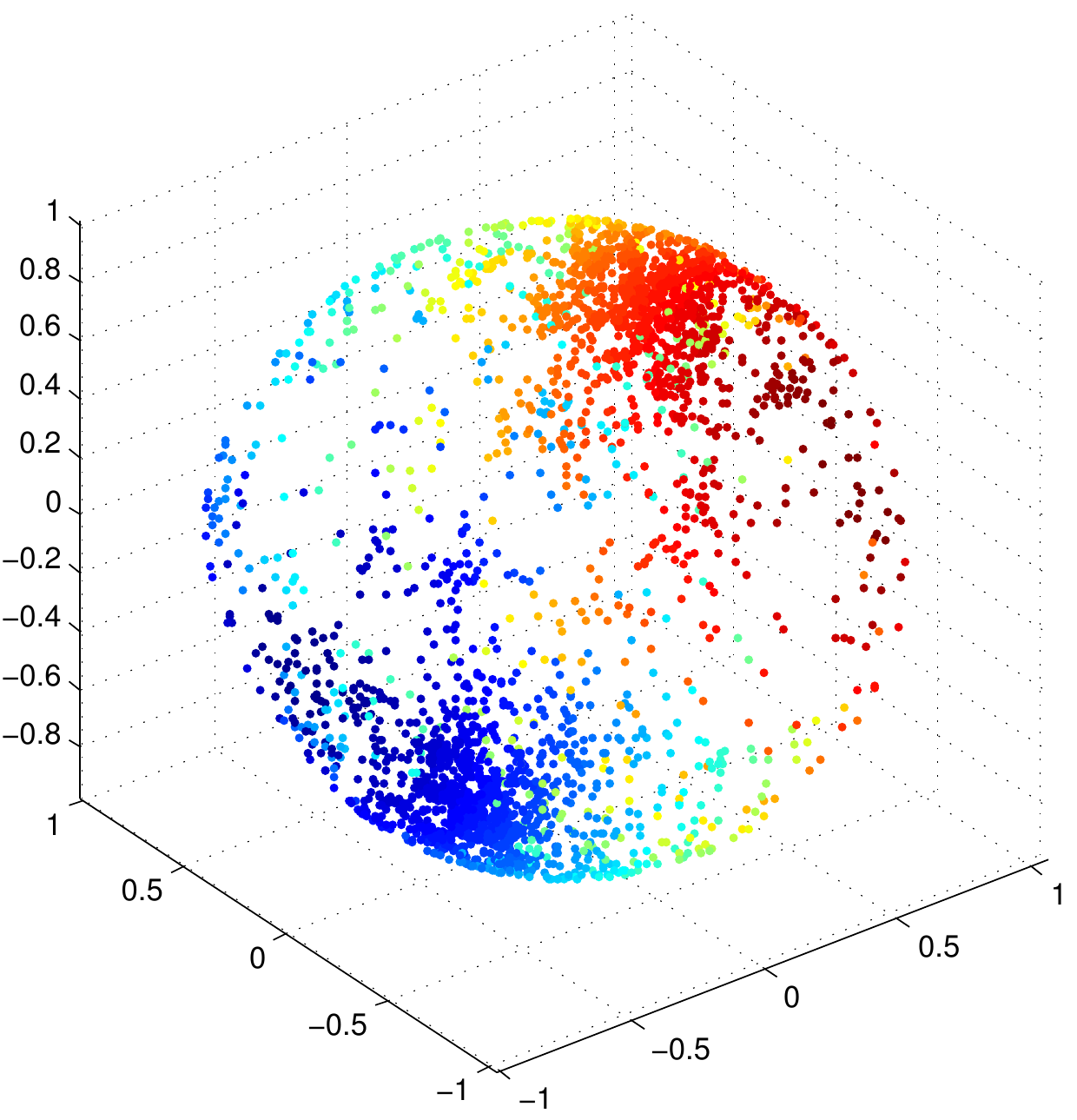}\hspace{.02\textwidth}\includegraphics[width=0.31\textwidth]{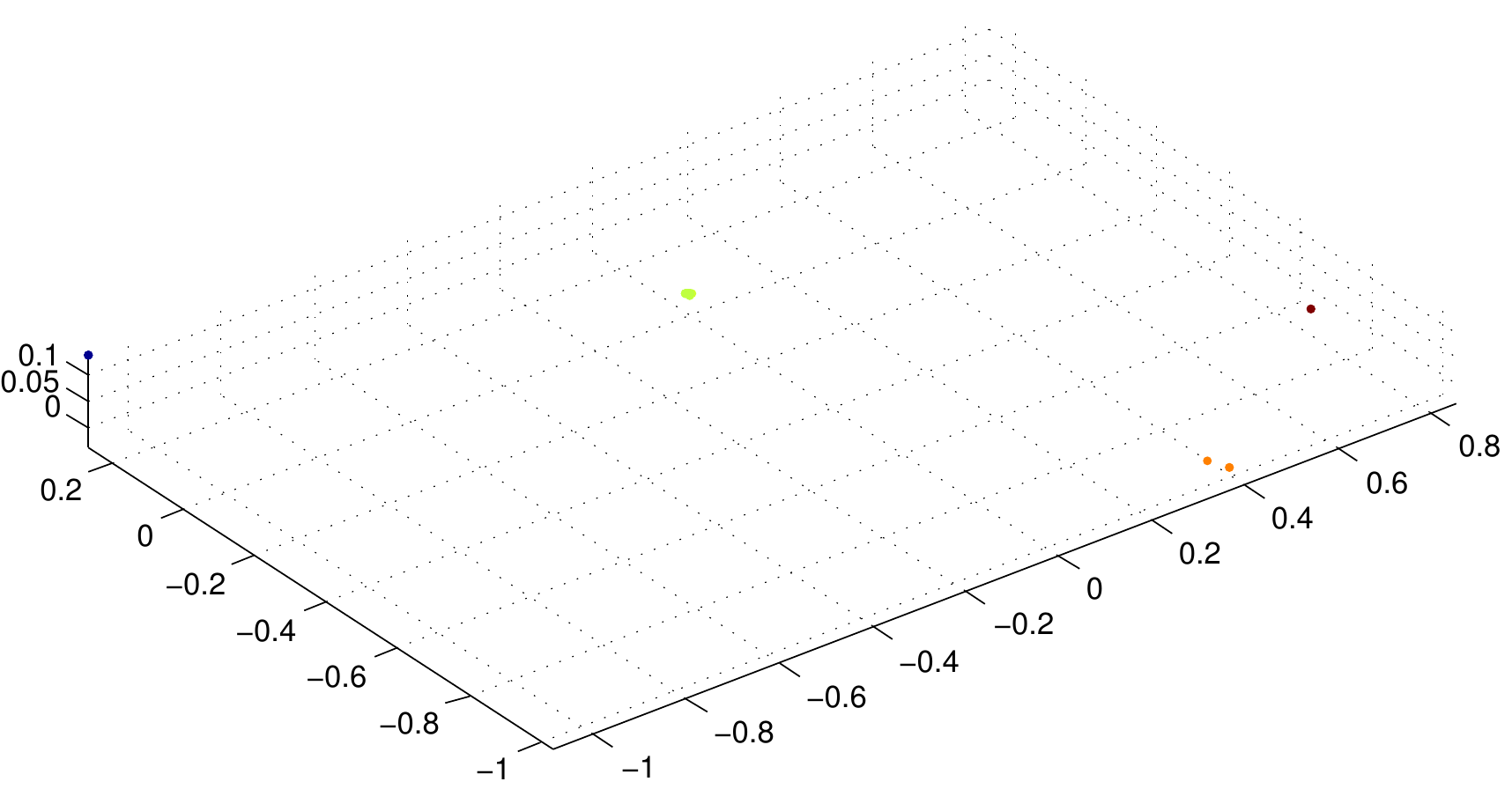}\hspace{.02\textwidth}\includegraphics[width=0.31\textwidth]{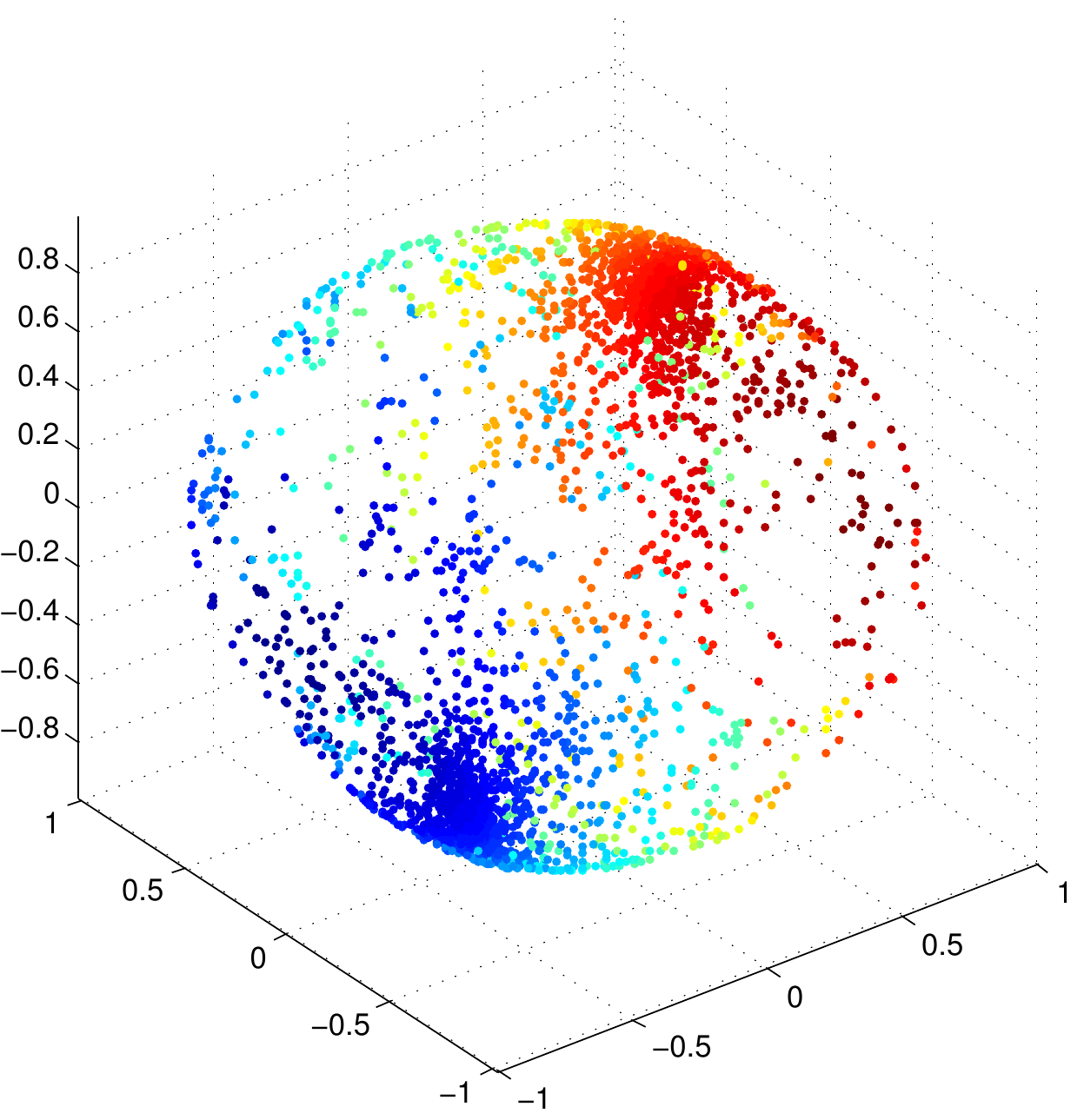}\hspace{.02\textwidth}

\caption{\label{spheresensitivity} Sensitivity of the approximation of the first three eigenfunctions ($\phi_1 \propto x, \phi_2\propto y$, and $\phi_3\propto z$) of the Laplacian on a unit sphere in $\mathbb{R}^3$ sampled by generating 3000 random points in $\mathbb{R}^3$ from a Gaussian distribution with a randomly generated $3\times 3$ covariance matrix.  These 3000 points where then projected onto the unit sphere. Top, left: The original data set. Top, middle: Mean squared error in the second eigenfunction (after normalization by an orthogonal transformation) as a function of $\epsilon$ for the fixed and variable bandwidth kernels highlighting the automated choices.  Top, right: The exponent, $a$, in the local power law fit $S(\epsilon) \propto \epsilon^a$ near each value of $\epsilon$, the automated choice of $\epsilon$ is found by maximizing this exponent.  Bottom: For each method, we use linear least squares to map the first three nontrivial eigenfunctions onto the original data set, this removes any scaling or orthogonal rotations due to the symmetry of the sphere.  Left: Variable bandwidth with automatically chosen $\epsilon$. Middle: Fixed bandwidth with automatically chosen $\epsilon$; note that this result is typical when $\epsilon$ chosen is so small that the points in the sparse regions of the manifold are completely disconnected from the rest of the manifold. Right: Fixed bandwidth with optimal value of $\epsilon$ from an exhaustive search by comparing with the true analytic eigenfunctions. }
\end{figure}


\section{Conclusion}\label{conclusion}

The theory developed above shows how to use variable bandwidth kernels to approximate the Laplacian and the generators of gradient flow systems.  We developed the general asymptotic expansion for the integral operators associated to the continuous variable bandwidth kernels of the form \eqref{vbkernel}.  This expansion reveals that a bandwidth function, $\rho$, changes the limiting operator to include a gradient term $(d+2)\nabla f \cdot\frac{\nabla \rho}{\rho}$.  In the case of uniform sampling, this implies that the Laplacian Eigenmaps algorithm, with a kernel of the form \eqref{vbkernel} will no longer produce a Laplacian operator, but instead will produce the generator of a gradient flow with potential field $U(x) = -(d+2)\log(\rho)$.  As shown in \eqref{Larho}, in the case of non-uniform sampling, we can remove the effect of the sampling, using $\alpha=1$ as in Diffusion Maps \cite{diffusion}.  This allows us to approximate the generator for any gradient flow system with known potential function $U$, by using the bandwidth function $\rho = \exp(-U/(d+2))$.  Alternatively, by choosing the bandwidth function to be a power of the sampling density, we can recover a result similar to that of \cite{diffusion} but with a constant $c_1$ which depends on both the $\alpha$ normalization and the exponent $\beta$ used in the bandwidth function.  Given a data set sampled from the invariant measure of a gradient flow system, this result allows us to use a large class of variable bandwidth kernels to approximate the generator of the gradient flow system from which the data originates without any prior knowledge of the potential function.

In practice we are interested in approximating the integral operators $\mathcal{L}_{\alpha,\beta}$ in \eqref{generator_ab} from data.  Often the data will be random samples of the density $q$, in which case we use discrete sums as Monte-Carlo approximations to the integral operators.  By extending the analysis of Singer in \cite{SingerEstimate}, we showed that the bias error estimates may be unbounded as the sampling density $q$ approaches zero.  Recall that in kernel density estimation problems, variable bandwidth kernels are known to give faster convergence rate and reduced sensitivity to the choice of bandwidth \cite{ScottVBK,ScottVBK2}, however it is still possible to use fixed bandwidth kernels to estimate densities.  The theory developed in Section~\ref{resultsSection} and the numerical example demonstrated in Section \ref{numerics} reveal that for operator approximation problems with densities that are not bounded away from zero, variable bandwidth kernels are not simply an improvement but are \emph{necessary} for convergence.  Fixed bandwidth kernels can be used for density estimation because the sampling density always appears in the numerator of the error bound whereas for operator approximation the density appears in the denominator of the error bound when $\beta=0$. 

From our numerical simulations, we found that on non-compact domains, the operators found by applying fixed bandwidth kernels do not always converge to the limits suggested by the continuous theory of \cite{diffusion} due to the large errors in the discrete operator estimates and that this limitation can be overcome by variable bandwidth kernels with an appropriate bandwidth function.  Moreover, for compact domains the estimation with variable bandwidth kernels are more robust to choices of the nuisance parameter $\epsilon$ which supports their popularity in practice. Indeed, we numerically found that the automated $\epsilon$-tuning method proposed in \cite{coifman2008TuningEpsilon} works quite well with this kernel in the sense that it detects the range of $\epsilon$ where the mean squared errors are empirically small.

\comment{The main drawback of the variable bandwidth kernel algorithm} 
The main drawback of our choice of bandwidth function is that it requires knowledge of the intrinsic dimension of the embedded manifold.  The automated $\epsilon$-tuning method proposed in \cite{coifman2008TuningEpsilon} seems to suggest one way to estimate this intrinsic dimension as we speculated above, however this can be difficult in the presence of noise and alternative techniques are given in \cite{HeinDimension,MaggioniDimension}.  The dimension is required in two ways.  First the equations for $c_1$ and $c_2$ depend on the dimension, and for $\beta \neq 0$ the dimension is required to find $c_1$ which determines the limiting operator.  Second, the estimate $q_{\epsilon}^S$ of the sampling density $q$, which is used to de-bias the operator using the $\alpha$ normalization of \cite{diffusion}, requires a true variable bandwidth density estimate.  Since the order zero term in the expansion of $G_{\epsilon}^S(q)$ is $m_0\rho^{d}q$, we need to divide by $\rho^d$ to recover $q$, leading to the definition $q_{\epsilon}^S(x_j) = \sum_{l} K_{\epsilon}^S(x_j,x_l)/\rho(x_j)^d$.  An alternative approach which we also considered was to perform the normalization without the division by $\rho^d$.  We conducted a thorough analysis of this alternative normalization and found different formulas for $c_1$ and $c_2$.  However in our analysis of this alternative formulation we found that the constraint $c_2<0$ would require $\frac{-1}{d} < \beta < 0$, which restricts $\beta$ significantly when $d$ is large, and means that once again the dimension must be known in order to choose $\beta$ in practice. 

Remaining limitations to the theory developed here are that the results to do not apply to non-compact manifolds with boundary and all of our convergence results only show pointwise convergence to the limiting operators applied to smooth functions. The theory of \cite{diffusion} shows that Neumann boundary conditions are implicit to the diffusion maps construction on compact manifolds, however their result relies strongly on the boundary of the manifold being compact and we suspect that it will require a different method of proof on non-compact manifolds where the boundary may be non-compact.  Moreover, a strong result of \cite{BNspectral} shows that for a fixed bandwidth kernel the pointwise convergence can be extended to spectral convergence, and extending their results to variable bandwidth kernels is another remaining challenge.  The variable bandwidth kernels considered here could also be generalized to allow anisotropy, meaning that the Euclidean norm could be replaced by a more general norm which may also vary at different points on the manifold.  Anisotropic kernels have been used successfully in kernel density estimation \cite{ScottVBK,ScottVBK2} and the first results on these kernels for operator estimation on compact manifolds can be found in \cite{SingerAnisotropy2008,KushnirAnisotropy}.  More recently, for compact manifolds it was shown that a large class of anisotropic kernels which simply have exponential decay in the distance correspond to changes in the Riemannian metric on the manifold \cite{BS14}.  Extending these results to non-compact manifolds with boundary, along with spectral convergence would complete the connection between kernel methods and Riemannian differential geometry.

\section*{Acknowledgments}
The research of J.H. is partially supported by the Office of Naval Research Grants N00014-13-1-0797, MURI N00014-12-1-0912 and the National Science Foundation DMS-1317919. T. B. is supported under the ONR MURI grant N00014-12-1-0912.

\bibliographystyle{plain}
\bibliography{VBbib}

\appendix


\section{Convergence rates of the continuous operators}\label{VBK}

The goal of this appendix is to determine the rate of convergence of the generator of the integral operator, 
\begin{align}
G^S_{\epsilon}f = \epsilon^{-d/2}\int_{\mathcal{M}} K_{\epsilon}^S(x,y)f(y)\, dV(y), \label{a1}
\end{align}
associated with the kernel $K^S_\epsilon$ in \eqref{vbkernel} to $\mathcal{L}_{\alpha,\rho}$ in \eqref{Larho} as $\epsilon\rightarrow 0$. To achieve this goal, we need to determine the asymptotic expansion of $G^S_{\epsilon}$ with respect to $\epsilon$.  

In order to find this expansion, we first extend a key technical lemma (cf. Lemma \ref{diffmaplemma}) of \cite{diffusion} to non-compact manifolds in \ref{dmextension}. Ultimately, this expansion can be found by describing
\comment{we will apply change of variable to describe} $G^S_{\epsilon}$ in \eqref{a1} as an integral operator associated to a non-symmetric kernel,
\begin{align} 
K^R_{\epsilon}(x,y) \equiv h\left(\frac{||x-y||^2}{\epsilon\rho(y)}\right), \label{rightkernel}
\end{align}
which requires us to find the asymptotic expansion of this ``right-formulation". It turns out that the asymptotic expansion of the right-formulation with respect to the kernel in \eqref{rightkernel} can be formulated in the weak sense, which will reduce this challenging expansion to the more simple case of finding the asymptotic expansion for the integral operator corresponding to the kernel,
\begin{align} K^L_{\epsilon}(x,y) \equiv h\left(\frac{||x-y||^2}{\epsilon\rho(x)}\right),\label{leftkernel}\end{align}
which we will call the ``left-formulation". Thus, we will first discuss the left-formulation in \ref{leftform}. Subsequently, we use this left-formulation to derive the right-formulation in \ref{rightform}. Finally, in \ref{symmform}, we combine these two results to find the expansion of the symmetric-formulation in \eqref{a1}.  
For the sake of clarity, we will compute all these expansions assuming uniform sampling and then we will extend the final expansion of $G^S_\epsilon$ in \eqref{a1} to non-uniform sampling in \ref{nonuniform}.  We note that the left and right formulations can also be extended to non-uniform sampling using the same technique.  


\subsection{Extending the asymptotic expansion of diffusion maps}\label{dmextension}

In this section, we extend the following fundamental lemma for kernel operator estimation to non-compact manifolds.
\begin{lem}[Expansion of Fixed Bandwidth Kernels, Coifman and Lafon \cite{diffusion}]\label{diffmaplemma} Let $f$ be a smooth real-valued function on an embedded $d$-dimensional compact manifold $\mathcal{M} \subset \mathbb{R}^n$ and let $h:[0,\infty) \to [0,\infty)$ have fast decay, then we have
\[ G_{\epsilon}f(x) = \epsilon^{-d/2}\int_{\mathcal{M}} h\left(\frac{||x-y||^2}{\epsilon}\right)f(y)\, dV(y) = m_0 f(x) + \epsilon m_2(\omega(x)f(x) + \Delta f(x)) + \mathcal{O}(\epsilon^2) \]
where $m_0 = \int_{\mathbb{R}^d} h(||z||^2)dz$ and $m_2  = \frac{1}{2}\int_{\mathbb{R}^d} z_1^2 h(||z||^2)dz$ are constants determined by $h$, and $\omega$ depends on the induced geometry of $\mathcal{M}$.  Note that $dV(y)$ is the volume form on $\mathcal{M}$ and the operator $\Delta$ is the (negative definite) Laplacian on $\mathcal{M}$, and these are both defined with respect to the Riemannian metric inherited from the ambient space.
\end{lem}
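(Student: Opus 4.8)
The plan is to treat $G_\epsilon f(x)$ as a Laplace-type integral whose mass concentrates near $y=x$ as $\epsilon\to 0$, and to extract the first two terms of its asymptotic expansion by working in geodesic normal coordinates. First I would use the fast decay of $h$ to localize: for any fixed $\delta>0$, the contribution from $\{y : \|x-y\|>\delta\}$ is $\mathcal{O}(\epsilon^\infty)$ since $h(\|x-y\|^2/\epsilon)$ is super-polynomially small there, so up to negligible error we may restrict to a geodesic ball $B_\delta(x)$ on which the exponential map $\exp_x:T_x\mathcal{M}\cong\mathbb{R}^d\to\mathcal{M}$ is a diffeomorphism. Compactness guarantees a $\delta$ that works uniformly in $x$ and that all geometric quantities and derivatives of $f$ are bounded, which is what ultimately yields a uniform $\mathcal{O}(\epsilon^2)$ remainder.

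Next I would change variables $y=\exp_x(s)$ and record the three relevant local expansions in the normal coordinate $s=(s_1,\dots,s_d)$. The Riemannian volume form satisfies $dV(y)=\bigl(1-\tfrac{1}{6}R_{ij}(x)s^is^j+\mathcal{O}(|s|^3)\bigr)\,ds$ with $R_{ij}$ the Ricci tensor; the function expands as $f(\exp_x(s))=f(x)+s^i\partial_i f(x)+\tfrac{1}{2}s^is^j\partial_i\partial_j f(x)+\mathcal{O}(|s|^3)$, where at the origin the coordinate Hessian agrees with the covariant one and $\sum_i\partial_i\partial_i f(x)=\Delta f(x)$; and, crucially, the \emph{ambient} chord length differs from the geodesic length $|s|$ only at fourth order, $\|x-y\|^2=|s|^2+g(x,s)+\mathcal{O}(|s|^5)$ with $g$ a homogeneous quartic in $s$ whose coefficients are built from the second fundamental form of the embedding. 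I then substitute $s=\sqrt{\epsilon}\,z$, so that $h(\|x-y\|^2/\epsilon)=h\bigl(|z|^2+\epsilon\,g(x,z)+\cdots\bigr)=h(|z|^2)+\epsilon\,g(x,z)\,h'(|z|^2)+\mathcal{O}(\epsilon^2)$, while $dV=\epsilon^{d/2}\bigl(1-\tfrac{\epsilon}{6}R_{ij}z^iz^j+\cdots\bigr)dz$; the Jacobian factor $\epsilon^{d/2}$ exactly cancels the prefactor $\epsilon^{-d/2}$.

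Collecting by powers of $\epsilon$ finishes the computation. Every term odd in $z$ integrates to zero by the reflection symmetry $z\mapsto-z$, so at order $\epsilon^0$ only $f(x)\int_{\mathbb{R}^d}h(|z|^2)\,dz=m_0 f(x)$ survives. At order $\epsilon^1$ there are three even contributions: the Hessian term $\tfrac{1}{2}\partial_i\partial_j f(x)\int h(|z|^2)z^iz^j\,dz$, which by isotropy equals $m_2\sum_i\partial_i\partial_i f(x)=m_2\Delta f(x)$ since $\int h(|z|^2)z_iz_j\,dz=2m_2\,\delta_{ij}$; the quartic distance correction $f(x)\int g(x,z)h'(|z|^2)\,dz$; and the volume correction $-\tfrac{1}{6}f(x)\,R_{ij}(x)\int h(|z|^2)z^iz^j\,dz$. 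The last two are each $f(x)$ times a function of $x$ determined purely by the induced geometry, and I would collect them into the single coefficient $m_2\,\omega(x)$, giving $\epsilon m_2\bigl(\omega(x)f(x)+\Delta f(x)\bigr)$ as claimed. The main obstacle is precisely this order-$\epsilon$ bookkeeping: one must expand the chord length through fourth order in terms of the second fundamental form, verify that the cross terms $s^i\partial_i f$ paired with odd geometric corrections vanish by symmetry, and confirm that the distance and volume corrections assemble consistently into one geometric factor $\omega(x)$; the remainder estimate, by contrast, is routine given compactness and the finiteness of all moments of $h$.
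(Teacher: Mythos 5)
Your proposal is correct and is essentially the argument the paper relies on: the paper does not prove this lemma itself but cites Coifman and Lafon \cite{diffusion}, and its own extension (Lemma~\ref{mainlemma}) only reworks the localization step before stating that the rest ``proceeds by local asymptotic expansion of $G_\epsilon$ following \cite{diffusion} with no further modifications''---which is precisely your normal-coordinate expansion of $f$, the volume form, the chord-versus-geodesic distance, and the moment bookkeeping that assembles the curvature and second-fundamental-form corrections into $m_2\,\omega(x)$. One bookkeeping caveat: to actually obtain the $\mathcal{O}(\epsilon^2)$ remainder you must carry each expansion with its exact odd (cubic) terms and an $\mathcal{O}(|s|^4)$ remainder---truncating at $\mathcal{O}(|s|^3)$ as written only yields $\mathcal{O}(\epsilon^{3/2})$ until you invoke the $z\mapsto -z$ symmetry on the exact cubic terms---and expanding $h(|z|^2+\epsilon g(x,z)+\cdots)$ requires decay of $h'$ in addition to that of $h$.
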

The above lemma assumes uniform sampling on a compact manifold since we can estimate the operator $G_{\epsilon}f$ by the discrete sum,  
\[ \lim_{N\to\infty} \frac{1}{N}\sum^N_{i=1} K_{\epsilon}(x,x_i)f(x_i) = \int_{\mathcal{M}} K_{\epsilon}(x,y)f(y)q(y)\, dV(y) = \frac{1}{\textup{vol}(\mathcal{M})}\int_{\mathcal{M}} K_{\epsilon}(x,y)f(y) \, dV(y) = \frac{\epsilon^{d/2}}{\textup{vol}(\mathcal{M})} G_{\epsilon}f(x), \]
where the second equality follows from assuming $q$ is sampled uniformly with respect to the volume form $dV(y)$ on the manifold $\mathcal{M}$. To see this, let $q(y)=c$ be uniform with respect to $dV(y)$.  Since $q$ is a density, we have $1 = \int_{\mathcal{M}}q(y)\, dV(y) = \int_{\mathcal{M}} c \, dV(y) = c \textup{vol}(\mathcal{M})$ and so $q(y) = c = \textup{vol}(\mathcal{M})^{-1}$ which explains why the Monte-Carlo sum estimates $\frac{\epsilon^{d/2}}{\textup{vol}(\mathcal{M})}G_{\epsilon}f$ instead of $G_{\epsilon}f$.  

Of course, if $\mathcal{M}$ is not compact the integral in $G_{\epsilon}$ may diverge, however, for practical applications which sample a non-compact manifold, the sampling density is typically not uniform, and especially on unbounded manifolds we will assume fast decay of the sampling density at infinity.  Here, we generalize Lemma~\ref{diffmaplemma} to non-compact manifolds as follows,

\begin{lem}[Expansion of Fixed Bandwidth Kernels on non-Compact Manifolds]\label{mainlemma} Let $\mathcal{M} \subset \mathbb{R}^n$ be an embedded $d$-dimensional manifold without boundary and let $q:\mathcal{M}\to (0,\infty)$ be bounded above such that $q\in L^1(\mathcal{M}) \cap \mathcal{C}^3(\mathcal{M})$.  Let $h:[0,\infty) \to [0,\infty)$ have fast decay in the sense that there exists $a,\sigma$ such that $h(x) < a\exp(-x/\sigma)$.  Then for all $f \in L^2(\mathcal{M},q) \cap \mathcal{C}^3(\mathcal{M})$ we have,
\begin{align} G_{\epsilon}(fq)(x) = \epsilon^{-d/2}\int_{\mathcal{M}} h\left(\frac{||x-y||^2}{\epsilon}\right)f(y)q(y)\, dV(y) = m_0 f(x)q(x) + \epsilon m_2(\omega(x)f(x)q(x) + \Delta (fq)(x)) + \mathcal{O}(\epsilon^2) \label{gefq}\end{align}
where $m_0 = \int_{\mathbb{R}^d} h(||z||^2)dz$ and $m_2  = \frac{1}{2}\int_{\mathbb{R}^d} z_1^2 h(||z||^2)dz$ are constants determined by $h$ and $q$, and $\omega$ depends on the induced geometry of $\mathcal{M}$.  The operator $\Delta$ is the (negative definite) Laplacian on $\mathcal{M}$ with the induced metric.
\end{lem}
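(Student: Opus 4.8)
The plan is to reduce the non-compact statement to the compact expansion of Lemma~\ref{diffmaplemma} by using the fast decay of $h$ to localize the integral. The kernel $h(\|x-y\|^2/\epsilon)$ concentrates in an $\mathcal{O}(\sqrt{\epsilon})$ neighborhood of $x$ as $\epsilon\to 0$, so only the geometry of $\mathcal{M}$ near $x$ contributes to the leading terms, while the contribution of the (possibly unbounded) remainder of the manifold is negligible. Since the statement is pointwise, every implied constant may depend on $x$, $f$, and $q$, so no uniform estimate over $\mathcal{M}$ is needed. First I would record that $fq \in L^1(\mathcal{M})$: by Cauchy--Schwarz,
\begin{align}
\int_{\mathcal{M}} |f(y)|\,q(y)\, dV(y) \leq \|f\|_{L^2(\mathcal{M},q)}\,\|q\|_{L^1(\mathcal{M})}^{1/2} < \infty, \nonumber
\end{align}
using $f\in L^2(\mathcal{M},q)$ and $q\in L^1(\mathcal{M})$. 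I then fix a radius $r>0$ small enough that $B_r(x)\cap\mathcal{M}$ is covered by normal coordinates (such $r$ exists because $\mathcal{M}$ is a smooth embedded manifold) and split $G_\epsilon(fq)(x) = I_{\text{near}} + I_{\text{far}}$ over $B_r(x)\cap\mathcal{M}$ and its complement.

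For the far field, $\|x-y\|\geq r$ on $\mathcal{M}\setminus B_r(x)$, so the decay bound $h(u)<a\,\expe^{-u/\sigma}$ gives $h(\|x-y\|^2/\epsilon) < a\,\expe^{-r^2/(2\epsilon\sigma)}\expe^{-\|x-y\|^2/(2\epsilon\sigma)}$ after splitting the exponent in half and bounding one factor below on the far region. Since $\expe^{-\|x-y\|^2/(2\epsilon\sigma)}\leq 1$, this yields
\begin{align}
|I_{\text{far}}| \leq \epsilon^{-d/2}\,a\,\expe^{-r^2/(2\epsilon\sigma)}\int_{\mathcal{M}}|f(y)|\,q(y)\, dV(y) = \epsilon^{-d/2}\,a\,\expe^{-r^2/(2\epsilon\sigma)}\,\|fq\|_{L^1(\mathcal{M})}. \nonumber
\end{align}
Because $\epsilon^{-d/2}\expe^{-r^2/(2\epsilon\sigma)}$ decays faster than any power of $\epsilon$, the far field is $o(\epsilon^p)$ for every $p$ and in particular is absorbed into the $\mathcal{O}(\epsilon^2)$ error.

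For the near field, the integral over $B_r(x)\cap\mathcal{M}$ is exactly the configuration handled by Coifman and Lafon. Passing to normal coordinates $z\in\mathbb{R}^d$ centered at $x$, writing the embedding as a graph over the tangent space, Taylor expanding the ambient distance $\|x-y\|^2$ (whose curvature correction produces the geometric factor $\omega$) and Taylor expanding the $\mathcal{C}^3$ function $fq$ to second order, one reduces $I_{\text{near}}$ to Gaussian-type moment integrals of $h(\|z\|^2/\epsilon)$. This computation is purely local and carries over verbatim, producing $m_0 f(x)q(x) + \epsilon m_2(\omega(x)f(x)q(x)+\Delta(fq)(x)) + \mathcal{O}(\epsilon^2)$. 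The only change is that the moment integrals run over a ball rather than all of $\mathbb{R}^d$, but extending them to $\mathbb{R}^d$ costs another exponentially small error of the same $o(\epsilon^p)$ type as the far field. Combining $I_{\text{near}}$ and $I_{\text{far}}$ then gives \eqref{gefq}.

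I expect the main obstacle to be the far-field control rather than the local expansion, which is a standard computation. The subtle point is that $G_\epsilon$ carries the prefactor $\epsilon^{-d/2}$, so one must confirm that the exponential smallness of $h$ away from $x$ genuinely dominates this growing prefactor. This is precisely where both hypotheses enter: the integrability $fq\in L^1$ (derived above from $f\in L^2(\mathcal{M},q)$ and $q\in L^1$) bounds the tail mass, while the quantitative decay $h(u)<a\,\expe^{-u/\sigma}$ supplies the $\expe^{-r^2/(2\epsilon\sigma)}$ factor. Without the $L^1$ and fast-decay assumptions the tail could contribute at leading order, which is exactly the mechanism that forces the compact-manifold restriction in the original theory.
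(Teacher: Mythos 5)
Your proposal is correct and follows the same overall strategy as the paper — use the exponential decay of $h$ to localize the integral to a neighborhood of $x$, show the tail is negligible against the $\epsilon^{-d/2}$ prefactor, and then invoke the local Coifman--Lafon expansion unchanged — but the two tail arguments differ in a way worth noting. The paper localizes at the $\epsilon$-dependent radius $\epsilon^{\gamma}$ with $0<\gamma<1/2$, which is exactly the radius at which the Coifman--Lafon proof itself localizes (compactness enters their proof only at that one step), and bounds the tail by Cauchy--Schwarz \emph{inside} the tail integral, $\left|\,\cdot\,\right| \leq \|f\|_{L^2(\mathcal{M},q)}\bigl(\epsilon^{-d/2}\int_{\|x-y\|>\epsilon^{\gamma}} h^2 q \, dV\bigr)^{1/2}$, then uses $\|q\|_{\infty}$, the change of variables $x-y=\sqrt{\epsilon}\,z$, and a Gaussian-tail estimate at radius $\epsilon^{\gamma-1/2}\to\infty$. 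You instead split at a fixed radius $r$, first derive $fq\in L^1(\mathcal{M})$ via $\int |f|q \leq \|f\|_{L^2(\mathcal{M},q)}\|q\|_{L^1}^{1/2}$ (that Cauchy--Schwarz step is correct), and bound the far field by $\epsilon^{-d/2}a\,\expe^{-r^2/(2\epsilon\sigma)}\|fq\|_{L^1}$. Each route buys something: your $L^1$ bound never integrates the kernel over the far part of the (rescaled) manifold at all, so you need no control on the volume growth of $\mathcal{M}$ in distant annuli — a point the paper's step of ``extending the integral from the manifold to the entire ambient space'' treats rather casually — and your far-field estimate does not even use the hypothesis that $q$ is bounded above. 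The paper's $\epsilon^{\gamma}$ split, on the other hand, is the minimal surgery on the original proof: after the tail bound, the local computation truly needs no re-examination, whereas your fixed-$r$ split obliges you to re-run slightly more of the near-field argument (Taylor remainders of $fq$ and the metric on the fixed coordinate ball, plus extending the moment integrals from a ball of radius $\sim r/\sqrt{\epsilon}$ to $\mathbb{R}^d$); your sketch of this is right, and your appeal to embeddedness to get $B_r(x)\cap\mathcal{M}$ inside a single normal-coordinate patch is exactly the property needed — an injectively immersed but non-embedded manifold could re-enter every Euclidean ball around $x$ — so no gap, just a bit more local bookkeeping than the paper incurs.
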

\begin{proof}
We note that in the proof of Lemma~\ref{diffmaplemma} in \cite{diffusion} compactness is only used to bound the integral outside of a neighborhood of radius $\epsilon^{\gamma}$ for $0<\gamma<1/2$ around $x$.  Since $f \in L^2(\mathcal{M},q)$ we have,
\begin{align}
\left| \epsilon^{-d/2}\int_{y\in\mathcal{M},||x-y|| > \epsilon^{\gamma}} h\left(\frac{||x-y||^2}{\epsilon}\right)f(y)q(y)\, dV(y) \right| &\leq ||fq^{1/2}||_{L^2(\mathcal{M})}\left(  \epsilon^{-d/2}\int_{y\in\mathcal{M},||x-y|| > \epsilon^{\gamma}} h\left(\frac{||x-y||^2}{\epsilon}\right)^2 q(y)\, dV(y) \right)^{1/2} \nonumber \\
&= ||f||_{L^2(\mathcal{M},q)} \left(  \epsilon^{-d/2}\int_{z\in\hat{\mathcal{M}},||z|| > \epsilon^{\gamma-1/2}} h\left(||z||^2\right)^2 \epsilon^{d/2}q(x-\epsilon^{1/2}z)\, d\hat V(z) \right)^{1/2} \nonumber \\
&\leq ||f||_{L^2(\mathcal{M},q)} ||q||_{\infty} \left(  \int_{z\in\hat{\cal M},||z|| > \epsilon^{\gamma-1/2}} a e^{-2||z||^2/\sigma} \, d\hat V(z) \right)^{1/2} \nonumber \\
&\leq ||f||_{L^2(\mathcal{M},q)} ||q||_{\infty} \left(  \int_{||z|| > \epsilon^{\gamma-1/2}} a e^{-2||z||^2/\sigma} \, dz \right)^{1/2} 
= \mathcal{O}(\epsilon^2),
\end{align}
where $x-y = \sqrt{\epsilon}z$ so $dV(y) = \epsilon^{d/2} d\hat{V}(z)$, where $d\hat{V}(z)$ is the volume form of the transformed manifold $\hat{\cal M}$, and the last inequality follows from extending the integral from the manifold to the entire ambient space, and the final equality follows from the exponential decay of the integral of the tail of a Gaussian distribution, since $\epsilon^{\gamma-1/2} \to \infty$ as $\epsilon\to 0$.  This allows us to localize the integral in $G_{\epsilon}$ in \eqref{gefq} to an $\epsilon^{\gamma}$ neighborhood of $x$ so that,
\[ G_{\epsilon}(fq)(x) =  \epsilon^{-d/2}\int_{\mathcal{M}} h\left(\frac{||x-y||^2}{\epsilon}\right)f(y)q(y)\, dV(y) =  \epsilon^{-d/2}\int_{y\in\mathcal{M},||x-y|| < \epsilon^{\gamma}} h\left(\frac{||x-y||^2}{\epsilon}\right)f(y)q(y)\, dV(y) + \mathcal{O}(\epsilon^2). \]
  The remainder of the proof proceeds by local asymptotic expansion of $G_\epsilon$ following  \cite{diffusion} with no further modifications.
\end{proof}

Lemma~\ref{mainlemma} shows that the central asymptotic expansion of \cite{diffusion} extends trivially to non-compact manifolds by assuming an integrable sampling distribution $q$ which is bounded above.  Lemma \ref{mainlemma} requires that $f \in L^2(\mathcal{M},q)$ so the expansion is only valid for functions whose growth is controlled by the decay of the sampling $q$.  In particular, for unbounded manifolds such as $\mathbb{R}$ (see Section \ref{numerics}) the eigenfunctions of the operator $\mathcal{L}_{\alpha,\beta}$ in \eqref{generator_ab} may have polynomial growth and thus we will typically assume $q$ has exponential decay at infinity.  


\subsection{Left formulation of uniformly sampled data}\label{leftform}

Let $\rho(x)$ be a positive function on the manifold and define the variable bandwidth Gaussian kernel, with bandwidth $\rho$ to be $K_{\epsilon}^L$ in \eqref{leftkernel}. For simplicity we first assume uniform sampling on a compact manifold, since \ref{dmextension} shows that these expansions directly generalize by simply applying the operator $G_{\epsilon}$ in \eqref{gefq} to the product $fq$ under appropriate assumptions on $f$ and $q$.  In \ref{nonuniform} we will return to non-uniform sampling using this strategy. Under the uniform sampling assumption, the effect of the bandwidth function, $\rho$, is to weight the Laplacian. To show this, we define the following change of variables, $\hat y = \mathcal{F}(y) = \frac{y-x}{\sqrt{\rho(x)}} + x$. Then the integral operator,
\begin{align} G^L_{\epsilon}f(x) &= \epsilon^{-d/2} \int_{\mathcal{M}} h\left( \frac{||x-y||^2}{\epsilon\rho(x)} \right)f(y)\, dV(y) \nonumber \\
&= \epsilon^{-d/2}\rho(x)^{d/2} \int_{\mathcal{F}(\mathcal{M})} h\left( \frac{||x-\hat y||^2}{\epsilon} \right)f\left(\sqrt{\rho(x)}(\hat y-x)+x\right)d\hat V(\hat y) 
\end{align}
where $\left|\frac{dy}{d\hat y}\right| = \rho^{d/2}$.  We can now apply Lemma~\ref{diffmaplemma} to the integral expression with the function $\hat f(\hat y) = f\left(\sqrt{\rho(x)}(\hat y - x) + x\right)$ so that,
\begin{align}\label{rhoExpansion} G^L_{\epsilon}f(x) 
&= \rho(x)^{d/2} G_{\epsilon}\hat f(x) 
= \rho(x)^{d/2} \left(m_0\hat f(x) + m_2\epsilon(\omega(x)\hat f(x) + \Delta \hat f(x))\right),
\end{align}
where $m_0$ and $m_2$ are constants determined by the shape function $h$,
\[ m_0 = \int_{\mathbb{R}^d} h\left( ||z||^2 \right)dz,\quad\quad m_2 = \frac{1}{2}\int_{\mathbb{R}^d}z_1^2 h\left( ||z||^2 \right)dz, \] 
which are the same expressions as for the fixed bandwidth kernel in Lemma \ref{diffmaplemma}.  Note that the transformation $\kappa(\hat y) = \sqrt{\rho(x)}(\hat y-x)+x$ inside the function $f(\sqrt{\rho(x)}(\hat y-x)+x)$ corresponds to a change of metric in the tangent space $T_x\mathcal{M}$.  The map $\kappa(\hat y)$ is a local diffeomorphism such that $\kappa(x) = x$ and $D\kappa(\hat y) = \sqrt{\rho(x)}I_{d\times d}$.  In this small neighborhood of $x$, the Laplacian can be written locally as,
\begin{align}\label{rhoLaplacian} \Delta \hat f(x) &\equiv \left. \frac{1}{\sqrt{|g|}}\partial_{\hat y_i} \left(g^{ij}\sqrt{|g|} \partial_{\hat y_j} (f \circ \kappa)(\hat y)  \right) \right|_{\hat y=x} 
=\left. \frac{1}{\sqrt{|g|}}\partial_{\hat y_i} \left( g^{ij}\sqrt{|g|} (\partial_{\hat y_j} f) \circ \kappa (\hat y)\, \partial_{\hat y_j}\kappa(\hat y) \right) \right|_{\hat y=x} \nonumber \\
&=\left. \frac{\sqrt{\rho(x)}}{\sqrt{|g|}}\partial_{\hat y_i} \left( (g^{ij}\sqrt{|g|} \partial_{\hat y_j} f) \circ \kappa(\hat y) \right) \right|_{\hat y=x} =\left. \frac{\sqrt{\rho(x)}}{\sqrt{|g|}}\partial_{\hat y_i} \left(g^{ij}\sqrt{|g|} (\partial_{\hat y_j} f) \right) \circ \kappa(\hat y)\, \partial_{\hat y_i} \kappa(\hat y) \right|_{\hat y=x} \nonumber \\
&=\left. \frac{\rho(x)}{\sqrt{|g|}}\partial_{\hat y_i} \left(g^{ij}\sqrt{|g|} (\partial_{\hat y_j} f) \right) \circ \kappa(\hat y)  \right|_{\hat y=x} = \frac{\rho(x)}{\sqrt{|g|}}\partial_{\hat y_i} \left(g^{ij}\sqrt{|g|} (\partial_{\hat y_j} f(x) \right)  \nonumber \\
&= \rho(x)\Delta f(x). 
\end{align}
Notice that the above result is simply two applications of the chain rule, combined with the fact that $\kappa(x)=x$ and $D\kappa(\hat y) = \sqrt{\rho(x)}$.  Combining \eqref{rhoLaplacian} with \eqref{rhoExpansion} we have the following expansion for the $G_{\epsilon}^L$ based on the variable bandwidth kernel,
\begin{align}\label{VBExpansion} G^L_{\epsilon}f(x) &=  \rho(x)^{d/2} \left(m_0 f(x) + m_2\epsilon(\omega(x) f(x) + \rho(x) \Delta  f(x))\right) +\mathcal{O}(\epsilon^2)\nonumber \\
&= m_0 \rho(x)^{d/2} f(x) \left(1 + \epsilon m\left(\omega(x) + \rho(x) \frac{\Delta  f(x)}{f(x)}\right)\right)+\mathcal{O}(\epsilon^2),
\end{align}
where $m\equiv m_2/m_0$.  We now apply a left-normalization, dividing by $G^L_{\epsilon}1(x)$ outside the operator so we have,
\[ \frac{G^L_{\epsilon}f(x)}{G^L_{\epsilon}1(x)} = f(x) + \epsilon m \rho(x)\Delta f(x) +\mathcal{O}(\epsilon^2). \]
Finally, we can extract the order-$\epsilon$ term, defining the operator $L^L_{\epsilon}$ by,
\begin{align} L_{\epsilon}^Lf(x) = \frac{1}{\epsilon m \rho(x)}\left(\frac{G^L_{\epsilon}f(x)}{G^L_{\epsilon}1(x)} - f(x)\right) = \Delta f(x) + \mathcal{O}(\epsilon). \label{lle}\end{align}
In Figure~\ref{leftrightcomp}, we numerically verify this asymptotic expansion with a simple example on a periodic domain. Of course we can only easily approximate the operator $G^L_{\epsilon}$ in \eqref{VBExpansion} via a Monte-Carlo integral approximation, 
\[ \lim_{N\to\infty} \frac{1}{N}\sum_{i=1}^N K_{\epsilon}^L(x,x_i)f(x_i)  = \frac{1}{\textup{vol}(\mathcal{M})}\int_{\mathcal{M}}K_{\epsilon}^L(x,y)f(y)\, dV(y) = \frac{\epsilon^{d/2}}{\textup{vol}(\mathcal{M})}G^L_{\epsilon}f(x) , \] 
which is only valid when the sampling density of $\{x_i\}$ is uniform on $\mathcal{M}$.  We will return to the case of non-uniform sampling in \ref{nonuniform}.  Note that the factor $\epsilon^{d/2}/\textup{vol}(\mathcal{M})$ does not affect the operator $L_{\epsilon}^L$ in \eqref{lle} due to the left-normalization.  We now turn to the right formulation $K_{\epsilon}^R(x,y) = K_{\epsilon}^L(y,x)$ in \eqref{rightkernel} and we will emphasize the significant difference between these deceptively similar kernels.


\subsection{Right formulation of uniformly sampled data}\label{rightform}

In this section we continue to assume uniform sampling and consider the right formulation of the variable bandwidth kernel $K^R_{\epsilon}$ in \eqref{rightkernel}, which contains a variable bandwidth dependent only on $y$.  As we will see, the dependence on $\rho(y)$ will not be the same as that of the variable bandwidth given by $\rho(x)$ considered in the previous section. In this case, it is not possible to simply change variables to eliminate the $\rho(y)$ term from the kernel, since any change of variables which involves both $y$ and $\rho(y)$ would not be explicitly invertible.  Moreover, the Jacobian of such a change of variables would involve the gradient of the bandwidth function $\rho(y)$, which gives some insight into the difference between bandwidth functions which depend on $y$ rather than $x$.

To find an expansion of the following operator,
\[ G^R_{\epsilon}f(x) = \epsilon^{-d/2}\int_{\mathcal{M}} h\left(\frac{||x-y||^2}{\epsilon\rho(y)}\right)f(y)\, dV(y), \]
we consider a weak formulation,
\[ \left<g, G^R_{\epsilon}f\right> = \epsilon^{-d/2}\int_{\mathcal{M}}\int_{\mathcal{M}} h\left(\frac{||x-y||^2}{\epsilon\rho(y)}\right)f(y)g(x) \,dV(y) \, dV(x), \]
 for any arbitrarily smooth function $g$.
We now simply exchange the order of integrations, and expand the inner integral using our previous result in \eqref{VBExpansion},
\begin{align} \left<g, G^R_{\epsilon}f\right> &= \int_{\mathcal{M}} \left(\epsilon^{-d/2}\int_{\mathcal{M}} h\left(\frac{||x-y||^2}{\epsilon\rho(y)}\right)g(x)\, dV(x) \right) f(y) \, dV(y) \nonumber \\
&= \int_{\mathcal{M}} \rho(y)^{d/2}\left[m_0 g(y) + \epsilon m_2 (\omega(y)g(y) + \rho(y)\Delta g(y)) \right] f(y) \, dV(y) + \mathcal{O}(\epsilon^2) \nonumber \\
&= \int_{\mathcal{M}}  m_0 \rho(y)^{d/2}f(y)g(y) + \epsilon m_2 \omega(y)\rho(y)^{d/2}f(y)g(y)\, dV(y) + \epsilon m_2 \int_{\mathcal{M}} f(y)\rho(y)^{d/2 + 1}\Delta g(y) \, dV(y) + \mathcal{O}(\epsilon^2) \nonumber \\
&= \int_{\mathcal{M}}  m_0 \rho(y)^{d/2}f(y)g(y) + \epsilon m_2 \omega(y)\rho(y)^{d/2}f(y)g(y)\, dV(y) + \epsilon m_2 \int_{\mathcal{M}} g(y)\Delta \left(f(y)\rho(y)^{d/2 + 1}\right) \, dV(y) + \mathcal{O}(\epsilon^2) \nonumber
\end{align} 
where the last equality follows from the symmetry of $\Delta$, meaning $\left<f \rho^{d/2+1},\Delta g\right> = \left< \Delta \left(f\rho^{d/2+1}\right),g\right>$.  Recombining the two integrals, we summarize the above calculation as,
\[ \left<g, G^R_{\epsilon}f\right> = \int_{\mathcal{M}} g(y) \big(m_0 \rho(y)^{d/2}f(y) + \epsilon m_2 ( \omega(y)\rho(y)^{d/2}f(y)-\Delta (f(y)\rho(y)^{d/2 + 1}))\big) \, dV(y) + \mathcal{O}(\epsilon^2). \]
Finally, since $g(y)$ was arbitrary, we conclude that,
\[ G^R_{\epsilon}f = m_0 \rho^{d/2}f + \epsilon m_2 ( \omega\rho^{d/2}f+\Delta (f\rho^{d/2 + 1})) + \mathcal{O}(\epsilon^2), \]
where the equality is in the weak sense, and thus these operators will be equal on the space of smooth functions.  Expanding $\Delta (f\rho^{d/2 + 1})$ we find,
\[ \Delta (f\rho^{d/2 + 1}) = f\Delta(\rho^{d/2+1}) + \rho^{d/2+1}\Delta f + (d+2)\rho^{d/2}\nabla \rho \cdot \nabla f, \]
which allows us to write the operator expansion as,
\begin{align}\label{rightVMexpansion} G^R_{\epsilon}f = \rho^{d/2} \left(m_0 f + \epsilon m_2 ( \tilde\omega f + \rho\Delta f + (d+2)\nabla\rho \cdot \nabla f)\right) + \mathcal{O}(\epsilon^2), \end{align}
where $\tilde\omega = \omega-\rho^{-d/2}\Delta(\rho^{d/2+1})$.  If we apply the left normalization, dividing by $G^R_{\epsilon}1(x)$ outside the operator, we find that,
\[ \frac{G^R_{\epsilon}f(x)}{G^R_{\epsilon}1(x)} = f(x) + \epsilon m\big( \rho(x)\Delta f(x) + (d+2)\nabla \rho(x)\cdot \nabla f(x) \big) + \mathcal{O}(\epsilon^2). \]
Finally, we can extract the order-$\epsilon$ term, defining the operator $L^R_{\epsilon}$ by,
\[ L^R_{\epsilon}f \equiv \frac{1}{\epsilon m \rho} \left(\frac{G^R_{\epsilon}f}{G^R_{\epsilon}1} - f\right) = \Delta f + (d+2)\frac{\nabla \rho}{\rho}\cdot \nabla f + \mathcal{O}(\epsilon) . \]
Note that when the variable bandwidth is a function of $y$, the operator $L^R_\epsilon$ takes the form of a Kolmogorov operator for diffusion in potential well given by $U(x) = -(d+2)\log \rho(x)$.  We verify this result in Figure \ref{leftrightcomp}.  We now turn to the symmetric formulation with kernel $K^S_{\epsilon}(x,y)$ in \eqref{vbkernel} which will require this result.


\subsection{Symmetric bandwidth for uniformly sampled data}\label{symmform}

Continuing with our assumption of uniform sampling, we now return to the symmetric kernel $K^S_\epsilon$ in \eqref{vbkernel} and the associated operator, 
\[ G^S_{\epsilon}f(x) = \epsilon^{-d/2}\int_{\mathcal{M}}K^S_{\epsilon}(x,y)f(y)\, dV(y) = \epsilon^{-d/2}\int_{\mathcal{M}}h\left(\frac{||x-y||^2}{\epsilon\rho(x)\rho(y)}\right)f(y) \, dV(y). \]
In order to expand this expression, we will first change variables to eliminate the $\rho(x)$ term and then we will apply the expansion in \eqref{rightVMexpansion}.  Define the change of variables $\hat y = {\cal F}(y) = x - \frac{x-y}{\sqrt{\rho(x)}}$ so that $y = x-\sqrt{\rho(x)}(x-\hat y)$ and 
\[ G^S_{\epsilon}f(x) = \epsilon^{-d/2} \rho(x)^{d/2}\int_{\cal F(\cal M)}h\left(\frac{||x-\hat y||^2}{\epsilon \rho(x-\sqrt{\rho(x)}(x-\hat y))}\right)f(x-\sqrt{\rho(x)}(x-\hat y)) \, d\hat V( \hat y). \]
Letting $\hat \rho(\hat y) = \rho(x-\sqrt{\rho(x)}(x-\hat y))$ and $\hat f(\hat y) = f(x-\sqrt{\rho(x)}(x-\hat y))$ we have,
\[ G^S_{\epsilon}f(x) = \epsilon^{-d/2} \rho(x)^{d/2}\int_{\cal F(\cal M)}h\left(\frac{||x-\hat y||^2}{\epsilon \hat\rho(\hat y)}\right)\hat f(\hat y) \, d\hat V(\hat y). \]
Applying the expansion \eqref{rightVMexpansion} from the previous section we have,
\[ G^S_{\epsilon}f = \rho^{d/2}\hat \rho^{d/2} \left(m_0 \hat f + \epsilon m_2 ( \tilde\omega \hat f + \hat \rho\Delta \hat f + (d+2)\nabla \hat \rho \cdot \nabla \hat f)\right) + \mathcal{O}(\epsilon^2). \]
Note that $\hat \rho(x)=\rho(x)$ and $\hat f(x) = f(x)$ so that 
\[ G^S_{\epsilon}f = \rho^{d} \left(m_0 f + \epsilon m_2 ( \tilde\omega f +  \rho\Delta \hat f + (d+2)\nabla \hat \rho \cdot \nabla \hat f)\right) + \mathcal{O}(\epsilon^2). \]
Furthermore, $\nabla \hat f(x) = \left. \nabla f(x+\sqrt{\rho(x)}(x-\hat y)) \right|_{\hat y =x} = \sqrt{\rho(x)}\nabla f(x)$ and $\nabla \hat \rho(x) = \sqrt{\rho(x)}\nabla \rho(x)$ and $\Delta \hat f(x) = \rho(x)\Delta f(x)$ so we have,
\begin{align}\label{symmetricExpansion} G^S_{\epsilon}f = \rho^{d} \left(m_0 f + \epsilon m_2 ( \tilde\omega f +  \rho^2\Delta f + (d+2)\rho \nabla \rho \cdot \nabla f)\right) + \mathcal{O}(\epsilon^2). \end{align}
Applying left-normalization we find the operator $L^S_{\epsilon}$ given by, 
\begin{align}\label{symmetricOperator} L^S_{\epsilon}f(x) \equiv \frac{1}{\epsilon m \rho(x)^2}\left(\frac{G^S_{\epsilon}f(x)}{G^S_{\epsilon}1(x)}-f(x)\right) = \Delta f + (d+2)\frac{\nabla \rho}{\rho}\cdot \nabla f + \mathcal{O}(\epsilon). \end{align}
We verify this formula in Figure \ref{leftrightcomp} on a unit circle in $\mathbb{R}^2$ and a flat torus in $\mathbb{R}^4$ with data points spaced uniformly in the latent spaces $\theta \in [0,2\pi)$ and $(\theta,\phi) \in [0,2\pi)^2$ respectively.

\noindent {\bf Remark.} For general variable bandwidth with non-symmetric variable bandwidth Kernels,
\[K^U_{\epsilon}(x,y) = h\left(\frac{||x-y||^2}{\epsilon \rho_1(x)\rho_2(y)}\right), \]
it is not difficult to check that under the uniform sampling assumption, 
\[ L^U_{\epsilon}f(x) \equiv  \frac{1}{\epsilon m\, \rho_1(x)\rho_2(x)}\left(\frac{G^U_{\epsilon}f(x)}{G^U_{\epsilon}1(x)}-f(x)\right) = \Delta f + (d+2)\frac{\nabla \rho_2}{\rho_2}\cdot \nabla f + \mathcal{O}(\epsilon), \]
where $G^U_{\epsilon}\equiv \epsilon^{-d/2} \int_{\cal M} K^U_{\epsilon}(x,y)f(y)\,dV(y)$.

\begin{figure}
\centering
\includegraphics[width=0.4\textwidth]{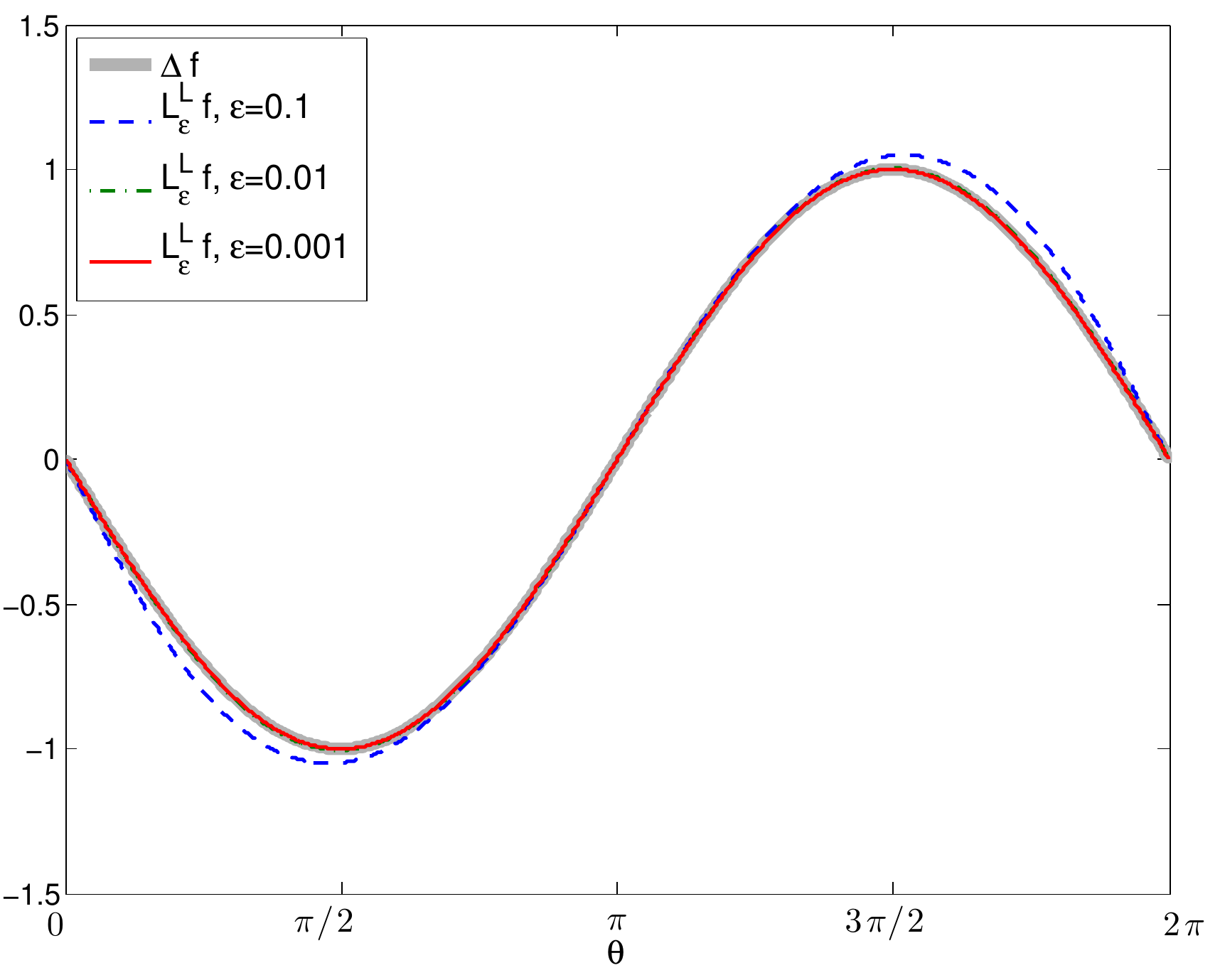}\includegraphics[width=0.4\textwidth]{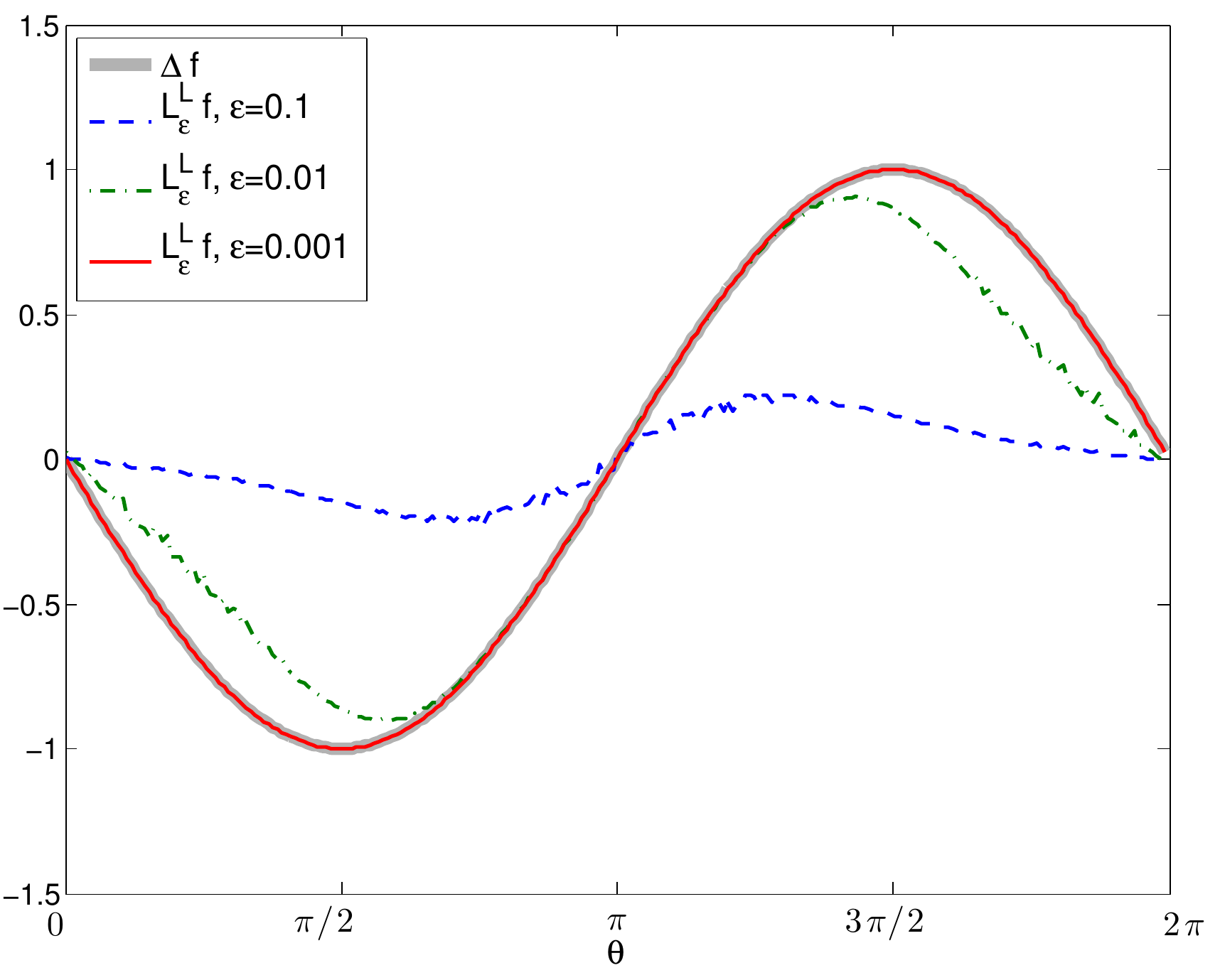}

\includegraphics[width=0.4\textwidth]{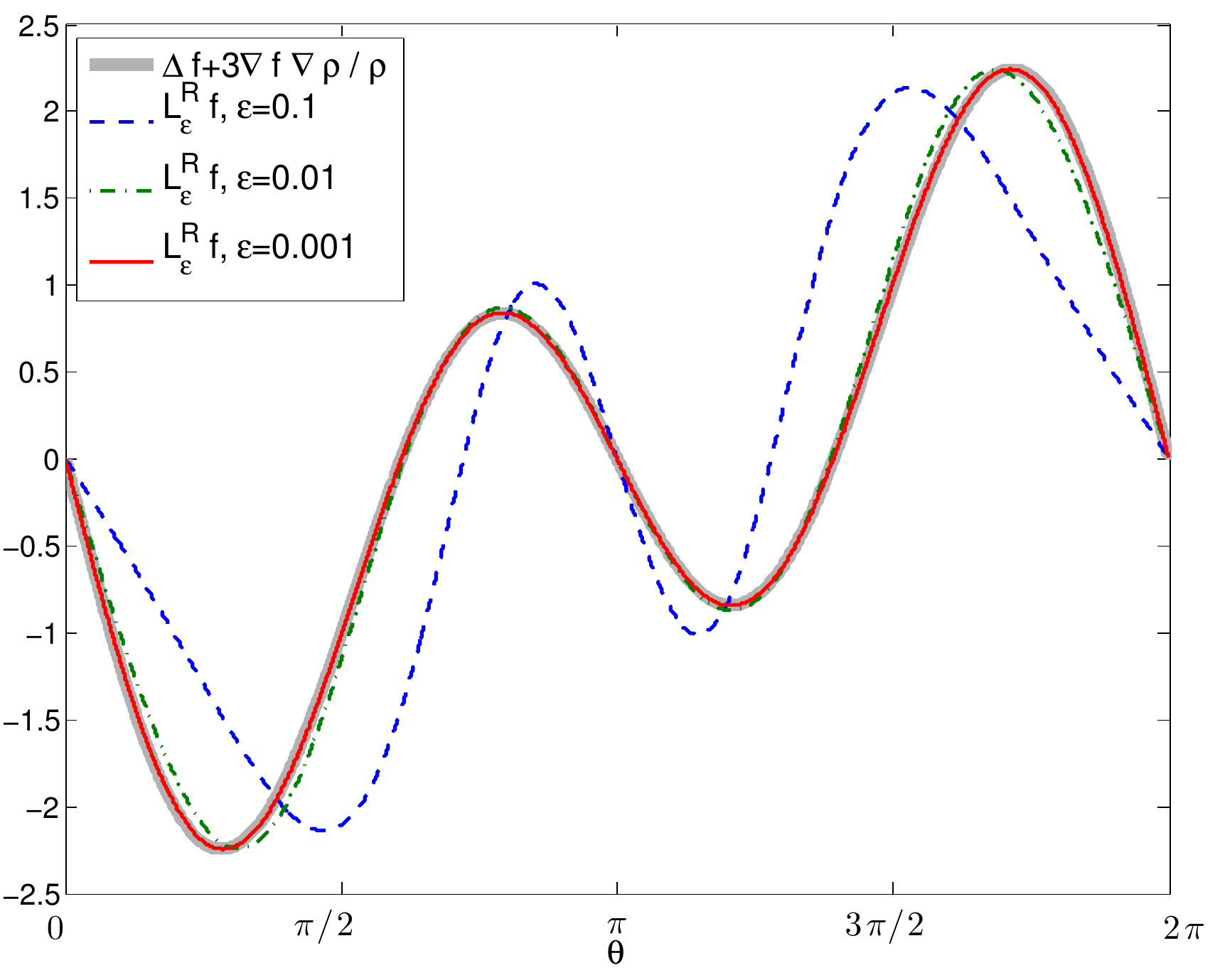}\includegraphics[width=0.4\textwidth]{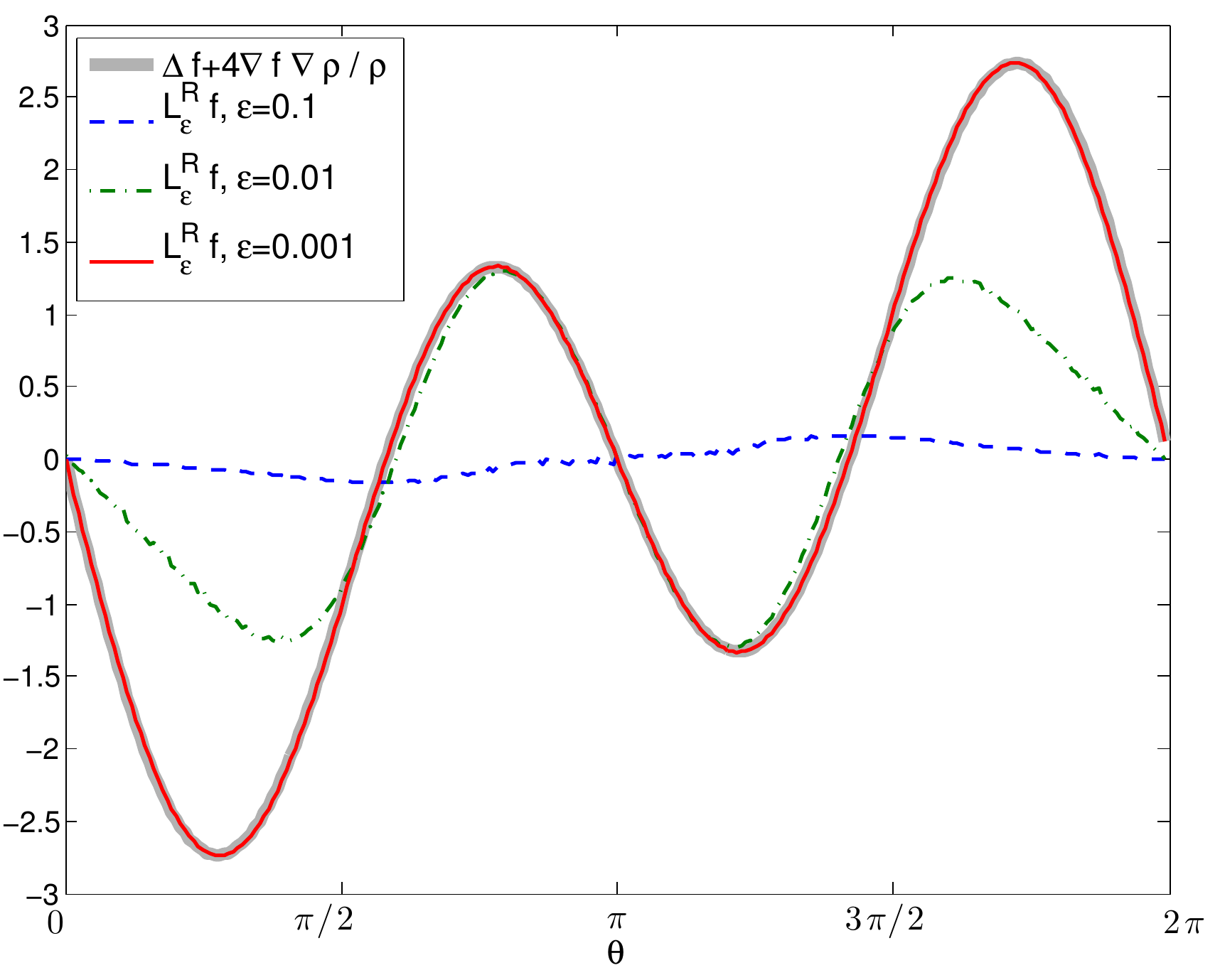}

\includegraphics[width=0.4\textwidth]{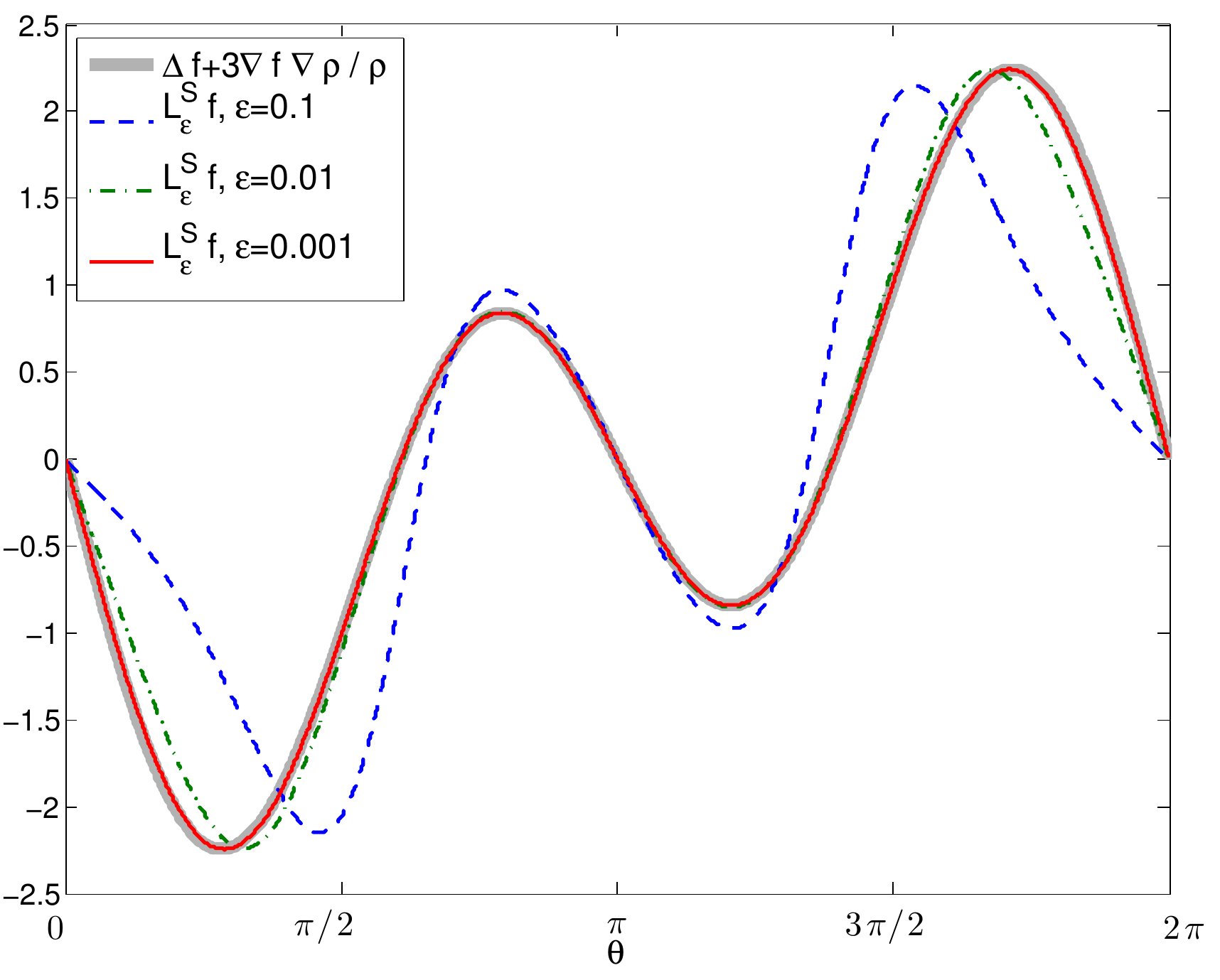}\includegraphics[width=0.4\textwidth]{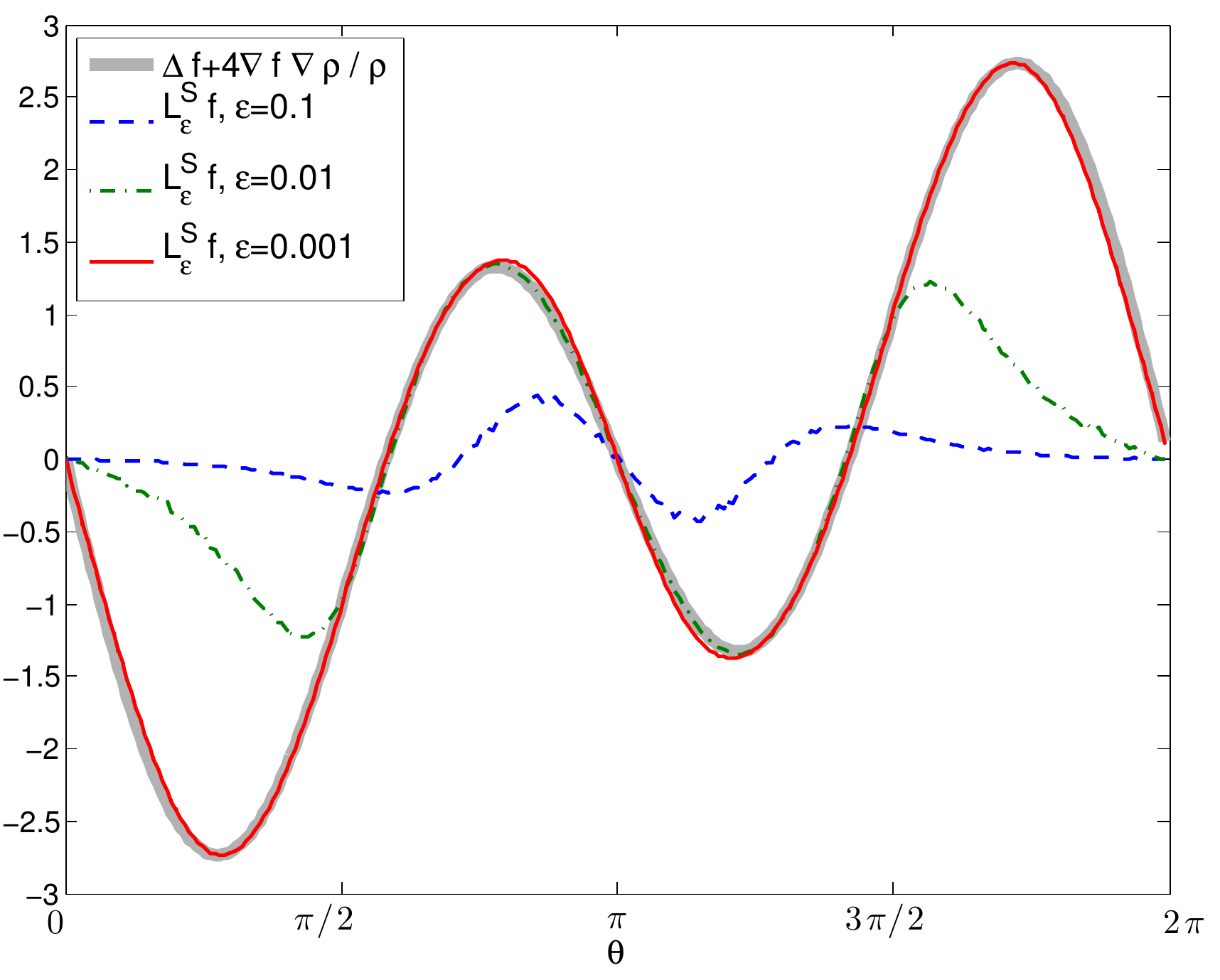}
\caption{\label{leftrightcomp} Operators $L_{\epsilon}^L$ (top), $L_{\epsilon}^R$ (middle), and $L_{\epsilon}^S$ (bottom) with variable bandwidth $\rho(\theta)=\exp(\cos(\theta))$ applied to $f(\theta) = \sin(\theta)$ (left column) and $f(\theta,\phi) = \sin(\theta)$ (right column).  Functions and operators are constructed on a uniform grid of 3000 points on the unit circle (left) and a uniform grid of 62,500 points on a flat torus in $\mathbb{R}^4$ (right).  Each operator is constructed for $\epsilon=0.1$ (blue), $0.01$ (green), and $0.001$ (red).  Left, top: $\Delta f = -\sin(\theta)$ (grey) compared to $L_{\epsilon}^Lf$. Left, middle: $\Delta f+(d+2)\frac{\nabla \rho}{\rho}\cdot\nabla f = -\sin(\theta)-3\sin(\theta)\cos(\theta)$ (grey) compared to $L_{\epsilon}^Rf$.  Left, bottom: $\Delta f+(d+2)\frac{\nabla \rho}{\rho}\cdot\nabla f = -\sin(\theta)-3\sin(\theta)\cos(\theta)$ (grey) compared to $L_{\epsilon}^Sf$.  Right, top: $\Delta f = -\sin(\theta)$ (grey) compared to $L_{\epsilon}^Lf$. Right, middle: $\Delta f+(d+2)\frac{\nabla \rho}{\rho}\cdot\nabla f = -\sin(\theta)-4\sin(\theta)\cos(\theta)$ (grey) compared to $L_{\epsilon}^Rf$.  Right, bottom: $\Delta f+(d+2)\frac{\nabla \rho}{\rho}\cdot\nabla f= -\sin(\theta)-4\sin(\theta)\cos(\theta)$ (grey) compared to $L_{\epsilon}^Sf$.  Note that on the torus we use a sparse matrix construction of the operators where only the 500 nearest neighbors of each point are allowed nonzero entries, this degrades the result for large $\epsilon$ but has no effect as $\epsilon$ becomes small due to the exponential decay of the kernel.}
\end{figure}


\subsection{Symmetric bandwidth for non-uniformly sampled data}\label{nonuniform}

Using the expansion of the symmetric variable bandwidth kernel from the previous section, we can now extend the result to the case of non-uniform sampling following the strategy of Coifman and Lafon in \cite{diffusion}.  Assume a positive sampling measure with density function $q(x)$ on $\mathcal{M}$, then when we compute Monte-Carlo approximations of kernel operators we will find,
\[ \lim_{N\to\infty} \frac{1}{N} \sum_{i=1}^N K^S_{\epsilon}(x,x_i)f(x_i) = \int_{\mathcal{M}} K^S_{\epsilon}(x,y)f(y)q(y) \, dV(y) = \epsilon^{d/2} G^S_{\epsilon}(fq)(x). \]
The previous equation implies that the direct application of our kernel $K^S_{\epsilon}$ will be biased by $q(x)$.  Thus we define the biased operator,
\[ G^S_{\epsilon,q}(f) \equiv G^S_{\epsilon}(fq). \]  
In order to remove the bias we first estimate $q(x)$ in the sense of a kernel density estimate by setting $f(x) = 1$.  Applying the result \eqref{symmetricExpansion} from the previous section and introducing notation, 
\begin{align}
\mathcal{L}^Sf = \rho^2\frac{\Delta f}{f} + (d+2)\rho \nabla \rho \cdot \frac{\nabla f}{f}\label{ELS} 
\end{align}
we have,
\begin{align} G^S_{\epsilon,q}(1) &= G^S_{\epsilon}(q) = \rho^{d} \left(m_0q + \epsilon m_2 ( \tilde\omega q +  \rho^2\Delta q + (d+2)\rho \nabla \rho \cdot \nabla q)\right) + \mathcal{O}(\epsilon^2) \nonumber \\
&= m_0\rho^{d}q \left(1 + \epsilon  m  \left( \tilde\omega +  \mathcal{L}^S q\right)\right) + \mathcal{O}(\epsilon^2) \label{GSeq1}
\end{align}
As in the standard Diffusion Map formulation \cite{diffusion}, we introduce the ``de-biasing" parameter $\alpha$ and note that,
\[ G^S_{\epsilon,q}(1)^{\alpha} = (m_0\rho^{d}q)^{\alpha} \left(1 + \alpha\epsilon  m  \left( \tilde\omega +  \mathcal{L}^S q\right)\right) + \mathcal{O}(\epsilon^2). \]
In particular, the sampling bias in the integral operator is removed through a right normalization,
\begin{align} G^S_{\epsilon,q,\alpha}(f) \equiv G^S_{\epsilon,q}\left( \frac{f\rho^{d\alpha}}{G^S_{\epsilon,q}(1)^{\alpha}} \right).\label{gseqa} \end{align}
Note that while this normalization appears to make the kernel non-symmetric, in Section \ref{algorithm} we present a numerical technique which will allow us to maintain the symmetry of the kernel matrix for the purpose of finding eigenvalues. Applying the result \eqref{symmetricExpansion} from the previous section we have,
\begin{align} G^S_{\epsilon,q,\alpha}(f) &= G^S_{\epsilon}\left( \frac{fq\rho^{d\alpha}}{G^S_{\epsilon,q}(1)^{\alpha}} \right) \nonumber \\
&= m_0\rho^{d(1+\alpha)}f q G^S_{\epsilon,q}(1)^{-\alpha}  \left(1 + \epsilon  m  \left( \tilde\omega +  \mathcal{L}^S(f \rho^{d\alpha}q G^S_{\epsilon,q}(1)^{-\alpha})\right)\right) + \mathcal{O}(\epsilon^2)  \nonumber \\
&= m_0\rho^{d(1+\alpha)}f q (m_0\rho^{d}q)^{-\alpha} \left(1 - \alpha\epsilon  m  \left( \tilde\omega +  \mathcal{L}^S q\right)\right)  \left(1 + \epsilon  m  \left( \tilde\omega +  \mathcal{L}^S(f q (m_0q)^{-\alpha})\right)\right) + \mathcal{O}(\epsilon^2)  \nonumber \\
&= f\rho^{d}(m_0q)^{1-\alpha} \left(1 - \alpha\epsilon  m  \left( \tilde\omega +  \mathcal{L}^S q\right) + \epsilon  m  \left( \tilde\omega +  \mathcal{L}^S(f q (m_0q)^{-\alpha})\right)\right) + \mathcal{O}(\epsilon^2)  \nonumber \\
&= f\rho^{d}(m_0q)^{1-\alpha} \left(1 + \epsilon  m  \left( (1-\alpha) \tilde\omega - \alpha \mathcal{L}^S q +  \mathcal{L}^S(f q (m_0q)^{-\alpha})\right)\right) + \mathcal{O}(\epsilon^2)  \label{GSeqaf}
\end{align}
Now applying left normalization we find,
\begin{align}\label{symmetricExpansionNonUniform} \frac{G^S_{\epsilon,q,\alpha}(f)}{G^S_{\epsilon,q,\alpha}(1)} &= \frac{f\rho^{d}(m_0q)^{1-\alpha} \left(1 + \epsilon  m  \left( (1-\alpha) \tilde\omega -\alpha\mathcal{L}^S q +  \mathcal{L}^S(f q (m_0q)^{-\alpha})\right)\right)}{\rho^{d}(m_0q)^{1-\alpha} \left(1 + \epsilon  m  \left( (1-\alpha) \tilde\omega - \alpha \mathcal{L}^S q +  \mathcal{L}^S(q (m_0q)^{-\alpha})\right)\right)} + \mathcal{O}(\epsilon^2)\nonumber \\
&= f \left(1+ \epsilon m \left( \mathcal{L}^S(fq (m_0q)^{-\alpha}) - \mathcal{L}^S(q (m_0q)^{-\alpha}) \right) \right)+ \mathcal{O}(\epsilon^2)
\end{align}
Extracting the order-$\epsilon$ term we have the operator,
\begin{align}
L^S_{\epsilon,\alpha}f(x) &\equiv \frac{1}{\epsilon m \rho(x)^2}\left( \frac{G^S_{\epsilon,q,\alpha}f(x)}{G^S_{\epsilon,q,\alpha}1(x)} - f(x) \right) = \frac{f}{\rho^2}\left(\mathcal{L}^S(fq (m_0q)^{-\alpha}) - \mathcal{L}^S(q (m_0q)^{-\alpha})\right) + \mathcal{O}(\epsilon) \nonumber \\
&= \frac{f}{\rho^2}\left( \rho^2\frac{\Delta (fg)}{fg} + (d+2)\rho \nabla \rho \cdot \frac{\nabla (fg)}{fg} - \rho^2\frac{\Delta g}{g} - (d+2)\rho \nabla \rho \cdot \frac{\nabla g}{g} \right)  + \mathcal{O}(\epsilon) \nonumber \\
&=  \frac{\Delta (fg)}{g} - \frac{f\Delta g}{g} + (d+2) \frac{\nabla \rho}{\rho} \cdot \frac{\nabla (fg)}{g}  - (d+2) f \frac{\nabla \rho}{\rho} \cdot \frac{\nabla g}{g}  + \mathcal{O}(\epsilon)  \nonumber \\
&=  \Delta f + 2\nabla f \cdot \frac{\nabla g}{g}+ (d+2) \nabla f \cdot \frac{\nabla \rho}{\rho}  + \mathcal{O}(\epsilon)\label{symmetricOperatorNonUniform} 
\end{align}
where $g \equiv m_0^{-\alpha}q^{1-\alpha}$ is introduced for convenience.  Note that $\frac{\nabla g}{g} = (1-\alpha)\frac{\nabla q}{q}$ so we can simplify the previous expression to,
\begin{align}\label{gradientFlow}  
L^S_{\epsilon,\alpha}f(x)  &= \Delta f + 2(1-\alpha)\nabla f \cdot \frac{\nabla q}{q}+ (d+2) \nabla f \cdot \frac{\nabla \rho}{\rho}  + \mathcal{O}(\epsilon) 
\end{align}
This completes the proof of the first error bound in Theorem~\ref{mainresult}. Note that \eqref{gradientFlow} shows how the variable bandwidth function $\rho$ effects the operator defined by the kernel.  When $\rho = 1$ is constant, we recover the result of \cite{diffusion}, namely a gradient flow with potential function $U = -2(1-\alpha)\log q$ defined by the sampling density $q$.  The formula \eqref{gradientFlow} reveals that we can use a variable bandwidth kernel to approximate the generator of a gradient flow for an arbitrary potential function $U$ by choosing bandwidth function $\rho = e^{-U/(d+2)}$ so that $(d+2)\frac{\nabla \rho}{\rho} = -\nabla U$ is the vector field defined by the gradient of the potential function $U$.  Setting $\alpha=1$ as in \cite{diffusion}, we remove the effect of the sampling density $q$ on the operator, and recover the desired generator $\Delta f - \nabla U \cdot \nabla f$.  

Finally, if we make the choice $\rho = q^{\beta}$, we find,
\begin{align}\label{fullSymmetricResult}
L^S_{\epsilon,\alpha,\beta}f(x) = \Delta f + c_1\nabla f \cdot \frac{\nabla q}{q}  + \mathcal{O}(\epsilon) \end{align}
where $L^S_{\epsilon,\alpha,\beta}$ is exactly $L^S_{\epsilon,\alpha}$ in \eqref{gradientFlow} with $\rho$ replaced by $q^\beta$
and $c_1 = 2-2\alpha + d\beta + 2\beta$.  In Figure~\ref{symmetricNonUniform}, we numerically verify the expansion in \eqref{fullSymmetricResult} on a circle sampled according to the distribution $q(\theta) = \exp(\cos(\theta))$.  Since $d=1$, setting $\beta = -1/2$ and $\alpha = 1/4$ we find $c_1=0$ and we recover the Laplacian on the circle.  Setting $\beta = -1/2$ and $\alpha=-1/4$ we find $c_1 = 1$ which yields the Kolmogorov operator for the potential $U(\theta) = -\log(q(\theta)) = -\cos(\theta)$ on the circle.

In practical applications the sampling density, $q$, will usually not be known, however we can first use any kernel to estimate the sampling density, see \cite{RosenblattFBK,ParzenFBK,ScottVBK,ScottVBK2} as well as the numerical details in Section \ref{algorithm}. Of course, this means that we will actually have $\rho = q^{\beta} + \mathcal{O}(\epsilon)$ (for $N$ sufficiently large).  While this approximation will affect the expansion \eqref{gseqa} with the symmetric kernel $K^S_{\epsilon}$ in \eqref{vbkernel}, it is easy to see that all of these effects are canceled by the left-normalization in \eqref{symmetricExpansionNonUniform}. Since the estimate in \eqref{fullSymmetricResult} is already order-$\epsilon$ this result is not affected by order-$\epsilon$ errors in the sampling density estimate which is used for the bandwidth function.

\begin{figure}
\centering
\includegraphics[width=0.4\textwidth]{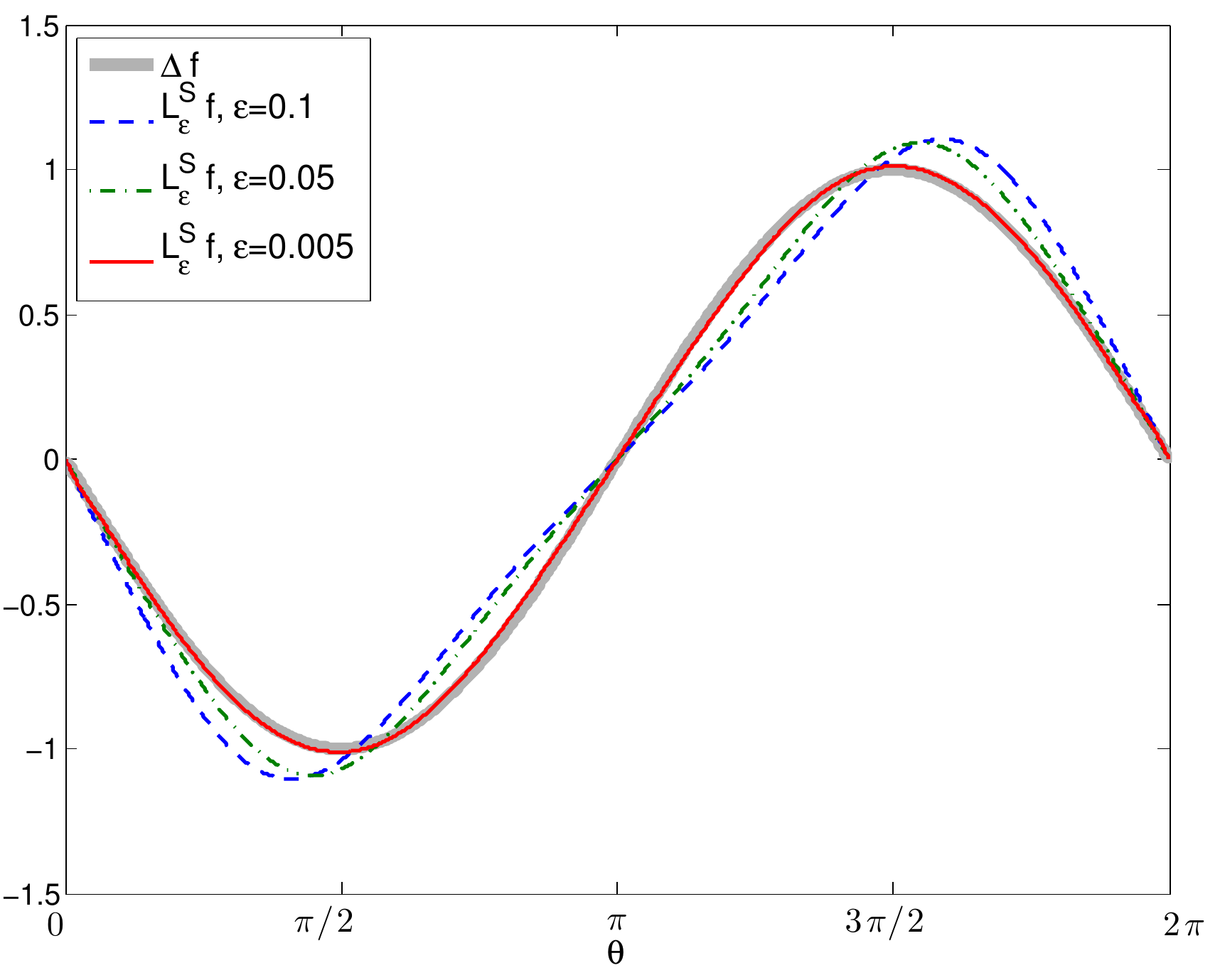}\includegraphics[width=0.4\textwidth]{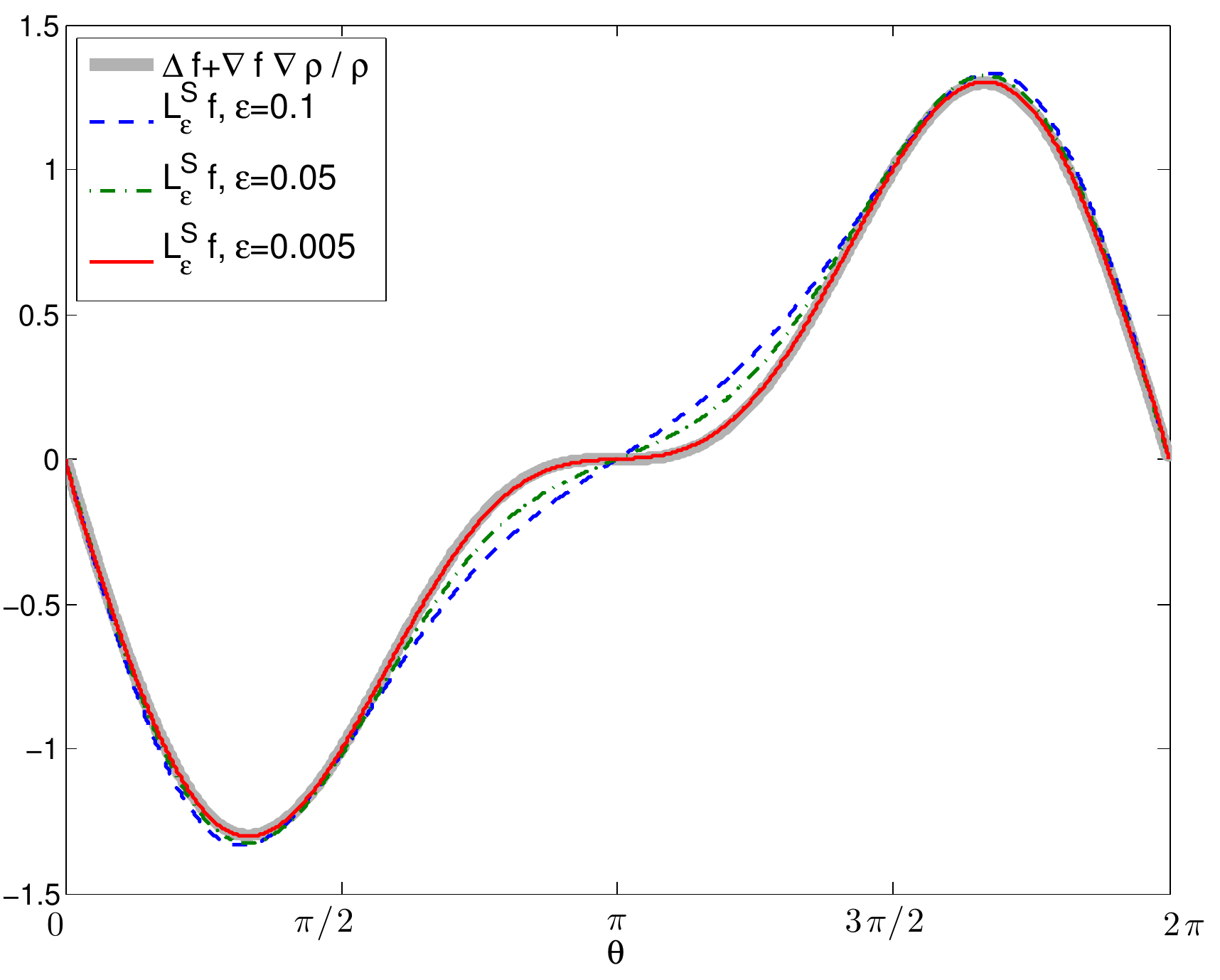}
\caption{\label{symmetricNonUniform} Operators with variable bandwidth $\rho(x)\rho(y)=(q(x)q(y))^{\beta}$ with $\beta=-1/2$ are applied to $f(x) = \sin(x)$ where $x$ parameterizes a unit circle in the plane $\mathbb{R}^2$.  Functions and operators are constructed on a set of 8000 points on the circle, sampled from the density $q(x) = \exp(\cos(x))$.  Each operator is constructed for $\epsilon=0.1$ (blue), $0.01$ (green), and $0.005$ (red).  Left: $\Delta f(x) = -\sin(x)$ (grey) compared to $L^S_{\epsilon}f$ with $\alpha=1/4$, $\beta=-1/2$, note that this is the Laplacian operator.  Right: $\Delta f(x)+\frac{\nabla q(x)}{q(x)}\cdot\nabla f(x) = -\sin(x)-\sin(x)\cos(x)$ (grey) compared to $L^S_{\epsilon}f$ with $\alpha=-1/4$, $\beta=-1/2$ notice that this is the backward Kolmogorov operator for gradient flow with potential $U=-\log (q)=-\cos x$.}
\end{figure}


\section{Convergence Rates for Discrete Operators}\label{discreteOps}

The goal of this section is to analyze the accuracy of the discrete estimates of the continuous kernel operators defined above.  Here, we follow the analysis of Singer \cite{SingerEstimate} and generalize the error estimates to the case of variable bandwidth kernels and non-uniform sampling.  Let $\{x_j\}_{j=1}^N$ be independently sampled according to the density $q(x)$ on the manifold $\mathcal{M} \subset \mathbb{R}^n$ (note that $\mathcal{M}$ is any Riemannian manifold and is not assumed to be compact).  For fixed $x=x_i$ from the data set, define the random variables, 
\begin{align}
F_i(x_j) &= \frac{K^S_{\epsilon}(x_i,x_j)f(x_j)}{\left(\epsilon^{-d/2}N^{-1} q_{\epsilon}^S(x_j) \right)^{\alpha}}, \hspace{50pt} G_i(x_j) = \frac{K^S_{\epsilon}(x_i,x_j)}{\left(\epsilon^{-d/2}N^{-1}q_{\epsilon}^S(x_j)\right)^{\alpha}} \nonumber
\end{align}
where 
\begin{align}
q^S_{\epsilon}(x_j) \equiv \sum_{l} K^S_{\epsilon}(x_j,x_l)/\rho(x_j)^d.  \label{qS}
\end{align}
The functionals $F_i$ and $G_i$ are used in the numerical algorithm to approximate the operator \eqref{symmetricOperatorNonUniform},
\begin{align} L^S_{\epsilon,\alpha}f(x_i) \equiv \frac{1}{\epsilon m \rho(x)^2}\left( \frac{G^S_{\epsilon,q,\alpha}f(x)}{G^S_{\epsilon,q,\alpha}1(x)} - f(x) \right) = \frac{1}{\epsilon m \rho(x_i)^2}\left( \frac{\mathbb{E}[F_i]}{\mathbb{E}[G_i]} - f(x_i) \right) \approx \frac{1}{\epsilon m \rho(x_i)^2}\left( \frac{\sum_j F_i(x_j)}{\sum_j G_i(x_j)} - f(x_i) \right), \label{approx_lsea}\end{align}
where the continuous expectations are defined as,
\[ \mathbb{E}[F_i] \equiv \epsilon^{d/2}G^S_{\epsilon,q,\alpha}f(x) \equiv  \int_{\mathcal{M}} F_i(y)q(y)\, dV(y),\quad\quad \mathbb{E}[G_i] \equiv  \epsilon^{d/2} G^S_{\epsilon,q,\alpha}1(x) \equiv \int_{\mathcal{M}} G_i(y)q(y)\, dV(y) \]
so that the continuous operator $L_{\epsilon,\alpha}^S$ in \eqref{symmetricOperatorNonUniform} agrees with the previous theory in \ref{nonuniform}.  Notice that the factor $\epsilon^{-d/2}N^{-1}$ appearing in $F_i$ and $G_i$ does not need to be known or included in the actual algorithm, since ultimately we will be interested in the ratio $\sum_{j}F_i(x_j) / \sum_{j}G_i(x_j)$ and the factor cancels exactly.  Similarly, while the algorithm and statement of Theorem \ref{mainresult} divide each functional by $q_{\epsilon}^S(x_i)^{\alpha}$, this factor cancels in $L_{\epsilon,\alpha}^S$ in \eqref{symmetricOperatorNonUniform} since the expectations are taken with respect to $x_j$.  Finally, since the density, domain, and number of sample points are the same, the normalization factors Monte-Carlo summations, $\sum_j F_i(x_j)$ and $\sum_j G_i(x_j)$, are identical and are therefore left out.  

In the subsequent sections we will find the error in replacing the continuous expectations with the discrete sums.  If we consider the approximation in \eqref{approx_lsea} to be an estimator for $L^S_{\epsilon,\alpha}f(x_i)$ in \eqref{symmetricOperatorNonUniform} then the bias of the estimator is,
\[ \mathbb{E}\left[ \frac{1}{\epsilon m \rho(x_i)^2}\left( \frac{\mathbb{E}[F_i]}{\mathbb{E}[G_i]} - \frac{\sum_j F_i(x_j)}{\sum_j G_i(x_j)}  \right) \right]. \]
However, since this expectation is difficult to evaluate, we instead follow the analysis of Singer in \cite{SingerEstimate}, which bounds the probability of a large bias error by estimating,
\[ P\left( \frac{1}{\epsilon m \rho(x_i)^2} \left( \frac{\sum_{j\neq i}F_i(x_j)}{\sum_{j\neq i}G_i(x_j)} - \frac{\mathbb{E}[F]}{\mathbb{E}[G]} \right) > a \right). \]
In order to estimate this error, we first need to control the error of the denominators in the functionals $F_i$ and $G_i$.

\subsection{Sampling error in the renormalization factor}\label{samplingErr1}

To analyze the denominator terms of $F_i$ and $G_i$, we define
\begin{align}
H_j(x_l) \equiv \epsilon^{-d/2}K^S_{\epsilon}(x_j,x_l)/\rho(x_j)^d.\nonumber
\end{align}  
From~\ref{symmform} we have,
\begin{align} \mathbb{E}[H_j(\cdot)] &= \frac{\epsilon^{-d/2}}{\rho(x_j)^{d} } \int_{\mathcal{M}} K^S_{\epsilon}(x_j,y)q(y)\, dV(y) =  \frac{G_{\epsilon,q}^S(1)}{\rho(x_j)^{d}} = m_0 q(x_j) \left(1+\epsilon m (\tilde{\omega}(x_j) + \mathcal{L}^S q(x_j) ) \right) + \mathcal{O}(\epsilon^{2}),  \label{EHj}
\end{align}
where we use the expansion \eqref{GSeq1}.   

We first note that in order for the random variables $H_j(x_l)$ to be identically distributed, we must neglect the term $l=j$.  This term is typically included in the implementation of the algorithm, however, the error made by neglecting it is estimated from the expansion,
\begin{align} \frac{1}{N}\sum_l H_j(x_l) &= \frac{1}{N-1+1} \sum_{l\neq j} H_j(x_l) + \frac{\epsilon^{-d/2}}{N} = \frac{(N-1)^{-1}}{1+(N-1)^{-1}} \sum_{l\neq j} H_j(x_l) + \frac{\epsilon^{-d/2}}{N} \nonumber \\
&=\frac{1}{N-1}\sum_{j\neq l} H_{j}(x_l)\Big(1-\frac{1}{N-1} + \mathcal{O}(\frac{1}{(N-1)^{2}}\Big) + \frac{\epsilon^{-d/2}}{N} \nonumber
\end{align}
which shows that the error is 
\begin{align}
\frac{1}{N}\sum_l H_j(x_l) - \frac{1}{N-1}\sum_{j\neq l} H_{j}(x_l) = \mathcal{O}(N^{-1}\epsilon^{-d/2},N^{-2}).\label{mcerror}
\end{align} 
In the remainder of this section, we will use this error bound to replace the summation over all $l$ with the summation over $l\neq j$, since in Theorem \ref{mainresult} we use the definition $q^S_\epsilon(x_i)$ in \eqref{qS} which includes the diagonal term.

We now analyze the error between the discrete Monte-Carlo approximation, $\frac{1}{N-1}\sum_{j\neq l} H_j(x_l)$, and the continuous expectation, $\mathbb{E}[H_j]$.  Letting, $Y_l = H_j(x_l) - \mathbb{E}[H_j]$, we note that $\mathbb{E}[Y_l] = 0$ and for $l\neq j$,
\begin{align} \textup{var}(Y_l) &= \mathbb{E}[Y_l^2] = \mathbb{E}[H_j(x_l)^2] - \mathbb{E}[H_j]^2 \nonumber \\
&= \hat{m}_0 \epsilon^{-d/2}\rho(x_j)^{-d}q(x_j) - m_0^2 q(x_j)^2 + \mathcal{O}(\epsilon^{1-d/2}), \nonumber \\
&=  \hat{m}_0 \epsilon^{-d/2}\rho(x_j)^{-d}q(x_j)  + \mathcal{O}(1), \nonumber
\end{align}
where $\hat{m}_0 \equiv \int_{\mathbb{R}^d} h(\|z\|^2)^2dz$.
We note that $H_j(x_l)$ is bounded due to the exponential decay of $K_{\epsilon}^S$, so by the Chernoff inequality we have, for $a$ sufficiently small,
\begin{align} P\left( \frac{1}{N-1} \left| \sum_{l\neq j} H_j(x_l) - (N-1)\mathbb{E}[H_j]  \right| > a \right) &= P\left( \left| \sum_{l\neq j} Y_l \right| > a(N-1) \right) \nonumber \\
&\leq 2\exp\left( \frac{-a^2(N-1)}{4\hat{m}_0 \epsilon^{-d/2}\rho(x_j)^{-d}q(x_j)}\right). \label{chernoffbound1}
\end{align}
We note that the above bound, commonly known as the Chernoff bound, is actually a less sharp version of a previous bound due to Bernstein.  Note the crucial fact that $q(x_j)$ appears in the denominator, so that as $q\to 0$ the probability of error in the estimate decays.  

Recall that our goal is to expand the ratio $\mathbb{E}[F_i]/\mathbb{E}[G_i]$ up to order-$\epsilon^2$.  Since $q_{\epsilon}^S$ in \eqref{qS}, which is a discrete estimate of the sampling density up to a scalar constant, will appear in the denominators of $F_i$ and $G_i$, we require $N^{-1}\epsilon^{-d/2}q_{\epsilon}^S(x_j) = \frac{1}{N}\sum_{l}H_j(x_l)$ to agree with the continuous limits $\mathbb{E}[H_j]$ up to order-$\epsilon^2$.  Thus we require,
\begin{align} \left| \frac{\epsilon^{-d/2}}{N}q_{\epsilon}^S(x_j) - \mathbb{E}[H_j] \right| &= \left|\frac{1}{N} \sum_{l}H_j(x_l) - \mathbb{E}[H_j] \right| = \left|\frac{1}{N-1} \sum_{l\neq j}H_j(x_l) - \mathbb{E}[H_j] \right| + \mathcal{O}\left(\frac{\epsilon^{-d/2}}{N},\frac{1}{N^2}\right) = \mathcal{O}\left(\epsilon^{2}, \frac{\epsilon^{-d/2}}{N},\frac{1}{N^2}\right) \nonumber \end{align}
with high probability, where the first error term is due to \eqref{mcerror}.  Notice, that balancing the first two error terms requires 
\begin{align}
\epsilon = \mathcal{O}(N^{-1/(2+d/2)}),\label{req1}  
\end{align}
and balancing the first and third error terms requires $\epsilon = \mathcal{O}(N^{-1})$ which is smaller than \eqref{req1} and therefore we neglect the third term. If we assume that $a = \mathcal{O}(\epsilon^{2})$ in \eqref{chernoffbound1}, then we can write $a = \hat a \epsilon^{2}$ where $\hat a = \mathcal{O}(1)$ and we will achieve the desired accuracy with high probability when the exponent of the Chernoff inequality, $a^2 N \epsilon^{d/2}\rho(x_j)^d/q(x_j) = \hat a^2 N \epsilon^{4+d/2}\rho(x_j)^d/q(x_j)$, is large.  In other words, when,
\begin{align}
 \frac{q(x_j)^{1/2}\rho(x_j)^{-d/2}}{N^{1/2}\epsilon^{2+d/4}} = \hat{a} = \mathcal{O}(1),\label{errorEst2}
\end{align}
we attain the desired accuracy with high probability. Notice, that if the numerator, $q^{1/2}\rho^{-d/2} = \mathcal{O}(1)$ this requires $\epsilon = \mathcal{O}(N^{-1/(4+d/2)})$  which dominates the previous requirement in \eqref{req1}, $\epsilon = \mathcal{O}(N^{-1/(2+d/2)})$.  This also shows that the error of order $\mathcal{O}(\epsilon^{-d/2}/N)$ in \eqref{mcerror} from neglecting the diagonal term is negligible compared to the error due to the variance of $Y_l$. This completes the proof of the second term of the error bound in Theorem~\ref{mainresult}.

\subsection{Bounding the statistical bias in the discrete estimate}\label{samplingErr2}

Using the above estimate, we can now consider the case where the summations in the denominators can be replaced by the continuous expectations, so that when $\frac{q(x_j)^{1/2}\rho(x_j)^{-d/2}}{N^{1/2}\epsilon^{2+d/4}} = \mathcal{O}(1)$, we have,
\begin{align}
F_i(x_j) &= \frac{K^S_{\epsilon}(x_i,x_j)f(x_j)}{\left(\mathbb{E}[H_j] + \mathcal{O}(\epsilon^2)\right)^{\alpha}} = \frac{K^S_{\epsilon}(x_i,x_j)f(x_j)}{m_0^\alpha q(x_j)^{\alpha}}(1-\alpha\epsilon m (\tilde \omega(x_j) - \mathcal{L}^s q(x_j))) + \mathcal{O}(\epsilon^2) \nonumber \\
G_i(x_j) &= \frac{K^S_{\epsilon}(x_i,x_j)}{\left( \mathbb{E}[H_j] + \mathcal{O}(\epsilon^2)\right)^{\alpha}} = \frac{K^S_{\epsilon}(x_i,x_j)}{m_0^\alpha q(x_j)^{\alpha}}(1-\alpha\epsilon m(\tilde \omega(x_j) - \mathcal{L}^s q(x_j))) + \mathcal{O}(\epsilon^2). \nonumber
\end{align}
From the expansion in \eqref{GSeqaf}, we deduce, 
\begin{align}
\mathbb{E}[F_i] &=\epsilon^{d/2} G^S_{\epsilon,q,\alpha}(f) = \epsilon^{d/2} f \rho^d(m_0 q)^{1-\alpha}\left(1+ \epsilon m ((1-\alpha)\tilde{\omega} -\alpha \mathcal{L}^s q + \mathcal{L}^s(fq(m_0 q)^{-\alpha})) \right) + \mathcal{O}(\epsilon^{2+d/2}) \nonumber \\
\mathbb{E}[G_i] &= \epsilon^{d/2} G^S_{\epsilon,q,\alpha}(1)  = \epsilon^{d/2}\rho^d(m_0 q)^{1-\alpha}\left(1+ \epsilon m ((1-\alpha)\tilde{\omega} -\alpha \mathcal{L}^s q + \mathcal{L}^s(q(m_0 q)^{-\alpha})) \right) + \mathcal{O}(\epsilon^{2+d/2}) \nonumber. 
\end{align}
Therefore, we can deduce 
\begin{align}
\mathbb{E}[F_i^2] &= \epsilon^{d/2} f^2\rho^d q^{1-2\alpha}m_0^{-2\alpha}\hat{m}_0 \left(1+ \epsilon\tilde{\omega}(\hat{m}-2\alpha m) -2\epsilon\alpha m \mathcal{L}^s q + \epsilon\hat{m}\mathcal{L}^s(f^2 q(m_0 q)^{-2\alpha})) \right) + \mathcal{O}(\epsilon^{2+d/2}), \nonumber \\
\mathbb{E}[G_i^2] &=\epsilon^{d/2} \rho^d q^{1-2\alpha}m_0^{-2\alpha}\hat{m}_0 \left(1+ \epsilon\tilde{\omega}(\hat{m}-2\alpha m) -2\epsilon\alpha m \mathcal{L}^s q + \epsilon\hat{m}\mathcal{L}^s(q(m_0 q)^{-2\alpha})) \right) + \mathcal{O}(\epsilon^{2+d/2}), \\
\mathbb{E}[F_iG_i] &= \epsilon^{d/2} f\rho^d q^{1-2\alpha}m_0^{-2\alpha}\hat{m}_0 \left(1+ \epsilon\tilde{\omega}(\hat{m}-2\alpha m) -2\epsilon\alpha m \mathcal{L}^s q + \epsilon\hat{m}\mathcal{L}^s(fq(m_0 q)^{-2\alpha})) \right) + \mathcal{O}(\epsilon^{2+d/2}), \nonumber
\end{align}
where $\hat{m}_0 \equiv \int_{\mathbb{R}^d} h(\|z\|^2)^2dz$ and $\hat{m}_2 \equiv \int_{\mathbb{R}^d} z_1^2 h(\|z\|^2)^2dz.$ 

Following the analysis of \cite{SingerEstimate} we want to compute,
\[ P\left( \frac{1}{\epsilon m \rho(x_i)^2} \left( \frac{\sum_{j\neq i}F_i(x_j)}{\sum_{j\neq i}G_i(x_j)} - \frac{\mathbb{E}[F_i]}{\mathbb{E}[G_i]} \right) > a \right) = P\left(\sum_{j\neq i} Y_j > a (N-1) \mathbb{E}[G_i]^2 \epsilon m \rho(x_i)^2 \right) \]
where $Y_j = \mathbb{E}[G_i]F_i(x_j) - \mathbb{E}[F_i]G_i(x_j) + a \epsilon m \rho(x_i)^2\mathbb{E}[G_i](\mathbb{E}[G_i]-G_i(x_j))$.   Note that $\mathbb{E}[Y_j] = 0$ and the variance is given by,
\[ \mathbb{E}[Y_j^2] = \mathbb{E}[G_i]^2\mathbb{E}[F_i^2] + \mathbb{E}[F_i]^2\mathbb{E}[G_i^2] - 2\mathbb{E}[G_i]\mathbb{E}[F_i]\mathbb{E}[F_iG_i] + \mathcal{O}(a \epsilon^{1+3d/2}). \]
It is easy to see that the order-$\epsilon^{3d/2}$ terms in the variance are zero, computing the order-$\epsilon^{1+3d/2}$ terms we note that the $\tilde \omega$ terms and $\mathcal{L}^s q$ terms also all cancel.  Letting $\mathcal{H}f = f\mathcal{L}^s f$  where $\mathcal{L}^s$ is defined in \eqref{ELS} we have,
\[ \mathbb{E}[Y_j^2] = \epsilon^{1+3d/2}\rho^{3d}m_0^{2-6\alpha}\hat{m}_2 \left( q^{2-2\alpha}\mathcal{H}(f^2 q^{1-2\alpha}) +  f^2 q^{2-2\alpha} \mathcal{H}(q^{1-2\alpha}) - 2f q^{2-2\alpha} \mathcal{H}(fq^{1-2\alpha})  \right) + \mathcal{O}(a \epsilon^{1+3d/2}+\epsilon^{2+d/2}). \]  
Since $\mathcal{H}f = f\mathcal{L}^s f = \rho^2 \Delta f + (d+2)\rho \nabla \rho \cdot \nabla f$, so that for arbitrary $f,g$ we have,
\begin{align} \mathcal{L}^S(fg) &= \rho^2\Delta (fg) + (d+2)\rho \nabla \rho \cdot \nabla (fg) \nonumber \\
&= \rho^2(f\Delta g + g\Delta f + 2\nabla f \cdot \nabla g) + (d+2)\rho \nabla \rho \cdot (f\nabla g + g\nabla f) \nonumber \\
&= f\mathcal{H} g +g\mathcal{H} f + 2\rho^2 \nabla f \cdot \nabla g. \label{ppty}
\end{align}
Using the definition of $\mathcal{H}$ and \eqref{ppty}, we can simplify the variance as,
\begin{align}\mathbb{E}[Y_j^2] &= 2\epsilon^{1+3d/2}\rho^{3d}m_0^{2-6\alpha}\hat{m}_2 \left(\rho^2 q^{2-2\alpha} \nabla f\cdot \nabla (fq^{1-2\alpha}) - \rho^2 f q^{2-2\alpha} \nabla f \cdot \nabla(q^{1-2\alpha})  \right) + \mathcal{O}(a\epsilon^{1+3d/2}+\epsilon^{2+d/2}) \nonumber \\
&= 2\epsilon^{1+3d/2}\rho^{3d}m_0^{2-6\alpha}\hat{m}_2\left( \rho^2 q^{3-4\alpha} \nabla f\cdot \nabla f \right) + \mathcal{O}(a\epsilon^{1+3d/2}+\epsilon^{2+d/2}) \nonumber \\
&= 2m_0^{2-6\alpha}\hat{m}_2\epsilon^{1+3d/2}  q^{3-4d\alpha} \rho^{2+3d} ||\nabla f||^2 + \mathcal{O}(a\epsilon^{1+3d/2}+\epsilon^{2+d/2}). 
\end{align}
Finally, since $Y_l$ is bounded due to the exponential decay of the kernel, by the Chernoff bound we have,
\begin{align} P\left(\sum_{j\neq i} Y_j > a (N-1) \mathbb{E}[G_i]^2 \epsilon m \rho^2 \right) &\leq 2\exp\left( \frac{ -a^2(N-1)^2\mathbb{E}[G_i]^4 \epsilon^2 m^2 \rho^4 }{4(N-1)\textup{var}(Y_j)} \right) \nonumber \\
&= 2\exp\left( \frac{ -a^2(N-1)m^2m_0^4 \epsilon^{2d+2}(\rho^{4+4d}q^{4-4\alpha}+\mathcal{O}(\epsilon))}{8m_0^{2-6\alpha}\hat{m}_2\epsilon^{1+3d/2}  q^{3-4d\alpha} \rho^{2+3d} ||\nabla f||^2 + \mathcal{O}(a\epsilon^{1+3d/2}+\epsilon^{2+d/2})} \right) \nonumber \\
&= 2\exp\left( \frac{ -a^2(N-1)c \rho^{d+2}q^{1+4d\alpha-4\alpha}}{4 \epsilon^{-1-d/2} ||\nabla f||^2} \right), \nonumber
\end{align}
where $c = m^2m_0^{2+6\alpha}/(8\hat{m}_2)$. So when $a=\mathcal{O}(\epsilon)$, we can write $a = \hat{a}\epsilon$  where $\hat{a}=\mathcal{O}(1)$ and we can solve for $\hat{a}$ to find the expected magnitude of errors to be,
\begin{align}\label{errorEst3}
\frac{||\nabla f(x_i)||q^{-(1/2-2\alpha+2d\alpha)}\rho^{-(d/2+1)}}{\sqrt{N}\epsilon^{1/2+d/4}} = \hat{a} = \mathcal{O}(1),
\end{align}
and this completes the proof of the third error bound in Theorem~\ref{mainresult}.

\comment{
By choosing bandwidth $\rho = q^{\beta}$ the bound becomes,
\[ P\left(  \frac{1}{\epsilon m \rho(x_i)^2}  \left( \frac{\sum_{j\neq i}F_i(x_j)}{\sum_{j\neq i}G_i(x_j)} - \frac{\mathbb{E}[F]}{\mathbb{E}[G]} \right) > a \right) \leq 2\exp\left( \frac{ -a^2(N-1)m^2 q^{1-4\alpha+4d\alpha+d\beta+2\beta}}{4 \epsilon^{-1-d/2} c ||\nabla f||^2} \right) \]
so when the exponent is $\mathcal{O}(1)$ we can solve for $a$ to find the expected magnitude of errors to be,
\begin{align}\label{errorEst3} \mathcal{O}\left(\frac{||\nabla f||q^{-c_2}}{N^{1/2}\epsilon^{1/2+d/4}}\right), \end{align}
where $c_2 = 1/2-2\alpha+2d\alpha +d\beta/2+\beta$.  Notice that the error will be proportional to $q^{-c_2}$ so for $c_2>0$ the errors may be unbounded as $q\to 0$.

Combining the error estimates \eqref{fullSymmetricResult}, \eqref{errorEst2}, and \eqref{errorEst3} we have,
\[L^S_{\epsilon,\alpha}f(x_i) \equiv  \frac{1}{\epsilon m \rho(x_i)^{2}}\left(\frac{\sum_{j}F_i(x_j)}{\sum_{j}G_i(x_j)}-f(x_i)\right)  = \Delta f(x_i) + c_1\nabla f(x_i) \cdot \frac{\nabla q(x_i)}{q(x_i)}+ \mathcal{O}\left(\epsilon, \frac{q(x_i)^{(1-d\beta)/2}}{\sqrt{N}\epsilon^{2+d/4}}, \frac{||\nabla f(x_i)||q(x_i)^{-c_2}}{\sqrt{N}\epsilon^{1/2+d/4}} \right), \]
which proves Theorem \ref{mainresult}.
}


\end{document}